\documentclass[10pt]{amsart}
\usepackage[english]{babel}
\usepackage{amssymb}
\usepackage{enumitem}
\usepackage{hyperref}
\usepackage[initials]{amsrefs}
\usepackage[all]{xy}
\usepackage{amsthm}
\SelectTips{cm}{}

% Symbols:

\newcommand{\bbN}{{\mathbb N}}

\newcommand{\bbR}{{\mathbb R}}
\newcommand{\bbZ}{{\mathbb Z}}

\newcommand{\mfg}{\mathfrak{g}}
\newcommand{\mfh}{\mathfrak{h}}
% General math commands:

\newcommand{\abs}[1]{{\left\lvert #1\right\rvert}}

\newcommand{\overto}[1]{{\buildrel{#1}\over\longrightarrow}}

\newcommand{\acts}{\curvearrowright}

% Local defs for this paper

\newcommand{\ssc}[2]{\frac{1}{#1}\bullet{#2}}
\newcommand{\scl}[2]{\operatorname{scl}_{#1}\left({#2}\right)}

% Theorem styles and numbering:
\newtheorem{mthm}{Theorem}

\newtheorem{theorem}{Theorem}[section]
\newtheorem{lemma}[theorem]{Lemma}

\newtheorem{corollary}[theorem]{Corollary}

\newtheorem{proposition}[theorem]{Proposition}

\theoremstyle{definition}

\newtheorem{defn}[theorem]{Definition}

\newtheorem{example}[theorem]{Example}

\newtheorem{remarks}[theorem]{Remarks}

%\numberwithin{equation}{section}
\begin{document}
%TO DO
%right invariant metrics

\title[Equivariant Random Maps Between Nilpotent Groups ]{Differentiability of Integrable Measurable Cocycles Between Nilpotent Groups  }
\date{\today}

\author{Michael Cantrell}
\address{University of Illinois at Chicago, Chicago}
\email{mcantr2@uic.edu}

\begin{abstract}
We prove an analog for integrable measurable cocycles of Pansu's differentiation theorem for Lipschitz maps between Carnot-Carath\'eodory spaces. This yields an alternative, ergodic theoretic proof of Pansu's quasi-isometric rigidity theorem for nilpotent groups, answers a question of Tim Austin regarding integrable measure equivalence between nilpotent groups, and gives an independent proof and strengthening of Austin's result that integrable measure equivalent nilpotent groups have bi-Lipschitz asymptotic cones. Our main tools are a nilpotent-valued cocycle ergodic theorem and a Poincar\'e recurrence lemma for nilpotent groups.
\end{abstract}

\maketitle

\section{Introduction}
In \cite{PansuLip} Pansu proved the following seminal quasi-isometric rigidity theorem for nilpotent groups.

\begin{theorem}[Pansu \cite{PansuLip}]\label{t:Pansu1}
Finitely generated quasi-isometric nilpotent groups have isomorphic associated Carnot groups.
\end{theorem}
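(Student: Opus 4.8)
The plan is to derive Theorem~\ref{t:Pansu1} from an ergodic-theoretic differentiation theorem for integrable measurable cocycles, exploiting that quasi-isometric nilpotent groups are integrably measure equivalent. First, from an $(L,C)$-quasi-isometry $f\colon G\to H$ between finitely generated nilpotent groups I would build a measure equivalence coupling $(\Omega,m)$ carrying commuting measure-preserving $G$- and $H$-actions with finite-measure fundamental domains, by the standard construction available when the source group is amenable (averaging the coarse bijection furnished by $f$ into a genuine measure-preserving structure). The associated cocycles $\alpha\colon G\times X\to H$ and $\beta\colon H\times Y\to G$ are integrable --- indeed in $L^{p}$ for every $p<\infty$, since the quasi-isometry constants bound the displacement --- and they satisfy the usual compatibility relations coming from the coupling; one may moreover arrange the $G$-action on $X$ and the $H$-action on $Y$ to be ergodic.

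Next I would feed $\alpha$ and $\beta$ into the main differentiation theorem of the paper. Invoking the nilpotent-valued cocycle ergodic theorem, the rescaled orbit words $\delta_{1/n}\bigl(\alpha(g^{n},x)\bigr)$ converge for a.e.\ $x$, with a limit depending on $g$ only through its image $\bar g$ in the associated Carnot group $G_\infty$; this defines a Pansu-type derivative $\Phi_x\colon G_\infty\to H_\infty$ which is a homomorphism of graded Lie groups. The quantitative form of the cocycle ergodic theorem, together with the two-sided length estimates inherited from $f$ being a quasi-isometry, shows $\Phi_x$ is bi-Lipschitz for the Carnot--Carath\'eodory metrics, and ergodicity makes $\Phi_x$ essentially independent of $x$, yielding a single bi-Lipschitz graded homomorphism $\Phi\colon G_\infty\to H_\infty$; symmetrically $\beta$ produces a bi-Lipschitz graded homomorphism $\Psi\colon H_\infty\to G_\infty$.

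It then remains to show $\Psi\circ\Phi=\id_{G_\infty}$ and $\Phi\circ\Psi=\id_{H_\infty}$, for this forces $\Phi$ to be an isomorphism of Carnot groups, and hence --- via Pansu's identification of the asymptotic cone of a finitely generated nilpotent group with its associated Carnot group --- identifies the Carnot groups of $G$ and $H$, proving Theorem~\ref{t:Pansu1}. The coupling relations between $\alpha$ and $\beta$ present the composite cocycle built from $\beta$ and $\alpha$ as the trivial cocycle $g\mapsto g$ corrected by a term that stays bounded along orbits, so a chain rule for Pansu derivatives of cocycles reduces the claim to showing that the derivative of a boundedly-perturbed trivial cocycle is the identity homomorphism. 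This is exactly where the Poincar\'e recurrence lemma for nilpotent groups enters: it guarantees that the bounded corrections, accumulated over the segments $g,g^{2},\dots,g^{n}$, grow sublinearly in $n$ and therefore vanish after rescaling by $\delta_{1/n}$.

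I expect the last step --- establishing that differentiation is compatible with composition of cocycles, and in particular that the Pansu derivative of an essentially bounded cocycle is trivial --- to be the main obstacle, since ``bounded along orbits up to recurrence'' is genuinely weaker than bounded and the error terms must be controlled uniformly enough to survive the rescaling; the recurrence lemma is tailored to supply precisely this control. A subsidiary difficulty is arranging the cocycle ergodic theorem to output a canonical graded homomorphism rather than merely an $x$-dependent family, which is what makes the passage from $\{\Phi_x\}$ to $\Phi$ (and the final comparison of $\Psi\circ\Phi$ with the identity) clean.
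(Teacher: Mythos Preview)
Your plan is the paper's: quasi-isometric amenable groups are (uniformly, hence integrably) measure equivalent by Shalom's observation, and then Theorem~\ref{T:Main} supplies the bi-Lipschitz group isomorphism $\Phi\colon G_\infty\to H_\infty$. Two small corrections of detail: the cocycle ergodic theorem in the paper yields convergence only \emph{in probability} (which suffices), and $\Phi$ is defined directly from the averaged abelianized cocycle $\overline{\alpha_{ab}}$, so it is $x$-independent from the outset rather than being an $x$-family collapsed by ergodicity. Your account of the Poincar\'e recurrence step is slightly off: the relation $\beta(\alpha(\gamma,x),x)=\gamma$ is not the trivial cocycle plus a bounded error but holds \emph{exactly}, though only for $x\in X\cap Y\cap\gamma^{-1}(X\cap Y)$; the recurrence lemma is invoked to produce, for each $g\in G_\infty$, a sequence $\gamma_{n_k}$ with $\ssc{n_k}{\gamma_{n_k}}\to g$ whose orbit returns to $X\cap Y$, so that this exact identity can be combined with the uniform convergence of Proposition~\ref{p:all sequences} (applied to both $\alpha$ and $\beta$) to conclude $\Psi(\Phi(g))=g$.
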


 He did this in two independently interesting steps. First, he identified the unique asymptotic cone of a finitely generated nilpotent group equipped with a left-invariant inner metric as an associated Carnot group with a Carnot-Carath\'eodory metric \cite{Pansu}. The second step is Pansu's differentiation theorem.
 
 \begin{theorem}[Pansu \cite{PansuLip}]\label{t:Pansu2}
 A bi-Lipschitz map between Carnot groups is differentiable almost everywhere. Moreover, the derivative induces a group isomorphism.
 \end{theorem}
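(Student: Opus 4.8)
The plan is to prove \emph{Pansu differentiability} of $f$: that for almost every $x\in G$ the rescalings $f_{x,t}(y)\defq\delta_{1/t}\bigl(f(x)^{-1}f(x\,\delta_t y)\bigr)$ converge, uniformly for $y$ in compact subsets of $G$ and as $t\to 0$, to a map $Df_x\colon G\to G'$ commuting with the dilations, and then to show this limit is a group homomorphism; the bi-Lipschitz hypothesis will finally force it to be an isomorphism.

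First I would treat differentiability along horizontal lines. Fix a nonzero $X$ in the first stratum $V_1$ of the Lie algebra $\mfg$ of $G$. For each $x\in G$ the curve $s\mapsto f(x\exp(sX))$ is Lipschitz from $\bbR$ into $G'$ for the Carnot--Carath\'eodory metric; read in exponential coordinates it is an ordinary Lipschitz map into a Euclidean space, hence differentiable for almost every $s$ by Rademacher's theorem, and since it has finite Carnot--Carath\'eodory length its velocity, where defined, lies in the horizontal distribution, i.e.\ in the left translate of $V_1'$. Integrating over the foliation of $G$ by cosets of $\exp(\bbR X)$ (Fubini) shows that for this $X$ almost every $x$ is a point of horizontal differentiability in the direction $X$. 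Running this over a countable dense set of directions and interpolating with the Lipschitz bound produces a conull set carrying a measurable \emph{horizontal differential} $D_Hf(x)\colon V_1\to V_1'$; the Baker--Campbell--Hausdorff expansion $\exp(tX)\exp(tY)=\exp\bigl(t(X+Y)+O(t^2)\bigr)$, whose error dies after applying $\delta_{1/t}$, shows $D_Hf(x)$ is linear.

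Next I would pass from lines to a uniform differential. Let $x$ be simultaneously a density point of the conull set just constructed and a point of approximate continuity of the measurable map $x\mapsto D_Hf(x)$; almost every $x$ qualifies. Since $G$ is a Carnot group, the Chow--Rashevskii theorem together with a compactness argument lets one join the identity to an arbitrary $y\in G$ by a horizontal path that is a concatenation of a uniformly bounded number of one-parameter horizontal segments of controlled length. Along each segment $f_{x,t}$ is governed by a directional derivative of $f$ at a basepoint near $x$, which by approximate continuity agrees with $D_Hf(x)$ up to an error small off a set of small measure; concatenating the segments and controlling the accumulated error yields the limit $Df_x(y)$, which is $\delta$-homogeneous and Lipschitz by construction. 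I expect this to be the crux: upgrading almost-everywhere directional differentiability along individual lines to genuine uniform differentiability requires controlling the many basepoints visited along a horizontal path simultaneously with their exceptional sets.

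Finally I would establish the homomorphism property and conclude. Because the dilations $\delta_t$ are group automorphisms, unwinding the definitions gives the exact identity $f_{x,t}(yz)=f_{x,t}(y)\cdot f_{x\delta_t y,\,t}(z)$. As $t\to 0$ the basepoint $x\delta_t y$ tends to $x$, so by approximate continuity the rescalings based there converge, along a suitable subsequence and for almost every perturbation, to the same limit $Df_x$; letting $t\to 0$ gives $Df_x(yz)=Df_x(y)\,Df_x(z)$. Hence $Df_x$ is a $\delta$-equivariant homomorphism $G\to G'$, equivalently its differential is a graded Lie algebra homomorphism $\mfg\to\mfg'$. Applying the construction to $f^{-1}$ and using the chain rule for Pansu derivatives gives $D(f^{-1})_{f(x)}\circ Df_x=\id$ almost everywhere, so $Df_x$ is an isomorphism of Carnot groups, as claimed.
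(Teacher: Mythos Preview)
The paper does not prove this statement at all: Theorem~\ref{t:Pansu2} is quoted from Pansu's paper \cite{PansuLip} as background and motivation, and no proof is given here. The present paper's contribution is the measurable-cocycle analog (Theorems~\ref{t:derivative} and~\ref{T:Main}), not a new proof of Pansu's original result. So there is no ``paper's own proof'' to compare against.

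That said, your sketch is a faithful outline of the standard approach to Pansu's theorem, essentially Pansu's original strategy: first obtain directional (horizontal) differentiability via classical Rademacher along one-parameter horizontal subgroups, then upgrade to full Pansu differentiability at density/approximate-continuity points using Chow--Rashevskii to write arbitrary group elements as bounded products of horizontal segments, and finally read off the homomorphism property from the identity $f_{x,t}(yz)=f_{x,t}(y)\cdot f_{x\delta_t y,\,t}(z)$. You correctly identify the crux: the passage from almost-everywhere directional derivatives to a genuine uniform Pansu derivative is delicate, because the basepoints of the successive horizontal segments vary with $t$ and need not lie in the good set. Making this rigorous requires a careful density-point argument (controlling how often the moving basepoints fall in the good set) together with the Lipschitz bound to absorb the exceptional contributions; as written your paragraph gestures at this but does not carry it out. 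The final isomorphism step via the chain rule applied to $f^{-1}$ is correct in spirit, though one must first know both $f$ and $f^{-1}$ are Pansu differentiable at compatible points, which again uses the a.e.\ differentiability just established together with Fubini-type reasoning.
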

 
 Since asymptotic cones of quasi-isometric groups are bi-Lipschitz, one deduces Theorem \ref{t:Pansu1}.

Measure equivalence (hereafter `ME') is an equivalence relation on groups introduced by Gromov \cite{Gromov2} that is a measure-theoretic parallel of quasi-isometry. It has been the object of considerable study: Furman's survey \cite{Furman} provides a thorough overview. However, a fundamental result of Ornstein and Weiss \cite{OW} implies that measure equivalence collapses all amenable groups into one equivalence class.

A measure equivalence between two groups implicitly defines a pair of measurable cocycles over probability measure preserving (pmp) actions of those groups. In their study of rigidity of hyperbolic lattices, \cite{BFS} Bader, Furman and Sauer have sharpened measure equivalence to a finer equivalence relation, called integrable measure equivalence (IME), by considering only those measure equivalences for which these cocycles satisfy an integrability condition.

Recently Austin and Bowen \cite{Austin} showed that the single ME class of infinite amenable groups splits into many IME classes.  Bowen showed that the growth type of a group is preserved by IME, and Austin used Bowen's result to prove the following.

\begin{theorem}[Austin \cite{Austin}]\label{t:Austin}
Finitely generated integrable measure equivalent nilpotent groups have bi-Lipschitz asymptotic cones.
\end{theorem}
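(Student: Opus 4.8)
The plan is to prove a version of Pansu's differentiation theorem (Theorem~\ref{t:Pansu2}) for integrable measurable cocycles and to combine it with Pansu's identification of the asymptotic cone of a nilpotent group with its associated Carnot group (the first step behind Theorem~\ref{t:Pansu1}).

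First I would unpack the integrable measure equivalence. A coupling of finitely generated nilpotent groups $\Gamma$ and $\Lambda$ gives a pair of measurable cocycles $\alpha\colon\Gamma\times X\to\Lambda$ and $\beta\colon\Lambda\times Y\to\Gamma$ over ergodic pmp actions $\Gamma\acts(X,\mu)$ and $\Lambda\acts(Y,\nu)$, linked through the coupling and satisfying the integrability hypothesis $\int_X\abs{\alpha(\gamma,x)}_\Lambda\,d\mu(x)<\infty$ for every $\gamma\in\Gamma$, and symmetrically for $\beta$; here $\abs{\cdot}_\Lambda$ is word length for a fixed finite generating set. Let $G_\Gamma,G_\Lambda$ be the associated Carnot groups with Carnot--Carath\'eodory metrics and dilation families $(\delta_t)$; by Pansu's cone theorem these metric groups are the asymptotic cones of $\Gamma$ and $\Lambda$.

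The core step is a cocycle ergodic theorem showing that $\alpha$ \emph{differentiates}: for $\mu$-a.e.\ $x$, every $v\in G_\Gamma$, and every sequence $\gamma_n\in\Gamma$ approximating $v$ at scale $n$ (in the sense of Pansu's cone theorem), the rescaled values $\delta_{1/n}\bigl(\alpha(\gamma_n,x)\bigr)$ converge in $G_\Lambda$ to $\phi(v)$, where $\phi\colon G_\Gamma\to G_\Lambda$ is a graded group homomorphism independent of the sequence and, by ergodicity, of $x$. To establish this I would: (i) use a subadditive (Kingman-type) ergodic theorem applied to $n\mapsto\abs{\alpha(\gamma^n,x)}_\Lambda$ along a grid of base elements $\gamma$ to get a well-defined homogeneous stable norm on $G_\Gamma$; (ii) promote norm convergence to convergence of the group elements themselves by inducting up the lower central series of $\Lambda$ --- on the abelianization the cocycle identity $\alpha(\gamma_1\gamma_2,x)=\alpha(\gamma_1,\gamma_2 x)\alpha(\gamma_2,x)$ makes $\alpha$ an additive cocycle, so the multiparameter additive ergodic theorem gives convergence along rays, while the higher graded layers carry commutator corrections that must be shown to be genuinely sublinear, hence to disappear under $\delta_{1/n}$; this sublinearity is where the nilpotent Poincar\'e recurrence lemma enters, forcing the accumulated corrections to return near the identity with positive frequency and so preventing top-speed growth; (iii) pass the cocycle identity to the limit to see that $\phi$ is a homomorphism, with homogeneity built into the rescaling. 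Running the same argument for $\beta$ produces a graded homomorphism $\psi\colon G_\Lambda\to G_\Gamma$; since $\alpha$ and $\beta$ are mutually inverse along the coupling, composing the two differentiation statements --- which again requires the recurrence lemma to synchronize the a priori unrelated actions $(X,\mu)$ and $(Y,\nu)$ along the coupling --- gives $\psi\circ\phi=\id_{G_\Gamma}$ and $\phi\circ\psi=\id_{G_\Lambda}$ a.e., so $\phi$ is a graded isomorphism of Carnot groups. Such an isomorphism is automatically bi-Lipschitz for the Carnot--Carath\'eodory metrics, and since these metric groups are the asymptotic cones of $\Gamma$ and $\Lambda$, Theorem~\ref{t:Austin} follows; in fact this recovers Pansu's stronger conclusion that the associated Carnot groups are isomorphic.

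I expect step (ii) to be the crux: the ergodic theorem must take values in a noncommutative nilpotent group and must yield convergence of group elements after Pansu rescaling, not merely of word lengths. Subadditive ergodic theory controls norms but is blind to the group law, and noncommutativity blocks a direct reduction to the abelian multiparameter theorem; reconciling the combinatorial geometry of $\Gamma$ with the smooth dilations of $G_\Gamma$ while propagating convergence up the central series and controlling the commutator error terms --- precisely the role of the nilpotent Poincar\'e recurrence lemma --- is the technical heart of the argument.
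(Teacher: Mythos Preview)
This theorem is quoted from Austin and not reproved directly here; the present paper instead establishes the stronger Theorems~\ref{t:derivative} and~\ref{T:Main}, which imply it. Your proposal is in fact an outline of the paper's proof of those stronger theorems, not of Austin's original argument (which, as the introduction describes, extracts only a bi-Lipschitz map between the cones as a subsequential limit of the maps $\kappa_{x,n}$, without identifying it as a homomorphism).

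Against the paper's actual argument, your architecture is right --- differentiate $\alpha$ to a homomorphism $\Phi$, differentiate $\beta$ to $\Psi$, show they are mutual inverses --- but two points need correction. First, you assert convergence for $\mu$-a.e.\ $x$; the paper obtains only convergence in measure (Proposition~\ref{p:all sequences}) and remarks explicitly that pointwise convergence is not available under the $L^1$ integrability hypothesis. Second, and more substantively, you place the nilpotent Poincar\'e recurrence lemma in step (ii) as the mechanism forcing sublinearity of the commutator corrections. That is not how the paper proceeds: the sublinearity $\abs{\pi_{com}\alpha(\gamma^n,x)}_\Lambda=o(n)$ (Lemma~\ref{l:comm lambda}) is obtained by a direct Baker--Campbell--Hausdorff estimate (Lemma~\ref{l:t}), bootstrapping an $O(n)$ bound to $o(n)$ one coordinate at a time by writing $\alpha(\gamma^{k\eta},x)$ as a product of $k$ factors with nearly equal abelianizations and observing that the commutator coordinates of such a product grow essentially additively rather than polynomially. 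No recurrence is used there. The Poincar\'e recurrence lemma (Lemma~\ref{l:Poincare}) enters only where you invoke it the second time, to find $x\in X\cap Y$ with $\gamma_n\cdot x\in X\cap Y$ so that $\beta(\alpha(\gamma_n,x),x)=\gamma_n$ and hence $\Psi\circ\Phi=\id$. Your recurrence-based heuristic for step (ii) (``return near the identity with positive frequency prevents top-speed growth'') is not an argument the paper makes, and it is not clear how to turn it into the required quantitative $o(n)$ estimate; as written this step has a gap.
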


 Notice that combining Theorems \ref{t:Pansu2} and \ref{t:Austin} one deduces the IME analog of Theorem \ref{t:Pansu1}.
 
 \begin{theorem}\label{t:IME-Carnot}
 Finitely generated integrable measure equivalent nilpotent groups have isomorphic associated Carnot groups.
 \end{theorem}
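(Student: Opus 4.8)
The plan is simply to chain together the two results already in hand. Let $\Gamma_1$ and $\Gamma_2$ be finitely generated nilpotent groups that are integrable measure equivalent, each equipped with a word metric (or any left-invariant inner metric). By Theorem \ref{t:Austin}, their asymptotic cones $\mathrm{Cone}(\Gamma_1)$ and $\mathrm{Cone}(\Gamma_2)$ are bi-Lipschitz homeomorphic; fix such a bi-Lipschitz homeomorphism $F$.

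Next I would invoke Pansu's asymptotic cone theorem \cite{Pansu}: for a finitely generated nilpotent group $\Gamma$, the asymptotic cone $\mathrm{Cone}(\Gamma)$ is isometric to the associated Carnot group of $\Gamma$ equipped with a Carnot-Carath\'eodory metric. Substituting this identification on both sides, $F$ becomes a bi-Lipschitz homeomorphism between two Carnot groups $G_1$ and $G_2$, each endowed with a CC metric. Now apply Pansu's differentiation theorem, Theorem \ref{t:Pansu2}: $F$ is Pansu-differentiable at almost every point, and at any point of differentiability the derivative is a group isomorphism $G_1 \to G_2$ (indeed an isomorphism of graded nilpotent Lie groups, intertwining the dilation structures). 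Hence $G_1 \cong G_2$, i.e.\ the associated Carnot groups of $\Gamma_1$ and $\Gamma_2$ are isomorphic, which is the assertion.

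There is essentially no obstacle once Theorems \ref{t:Austin} and \ref{t:Pansu2} are granted; the remaining points are bookkeeping. One must check that the notion of ``associated Carnot group'' appearing in the statement of Theorem \ref{t:Austin} agrees with the one in Pansu's theorems — this is routine, since both are built from the associated graded Lie algebra of the Malcev completion — and that the map supplied by Theorem \ref{t:Austin} is genuinely a bi-Lipschitz homeomorphism of the metric spaces (not merely a quasi-isometry), so that Theorem \ref{t:Pansu2} applies verbatim; Austin states his conclusion in precisely this form. The more substantive observation is that this deduction is \emph{indirect}: it routes through the asymptotic cone and Pansu's analytic differentiation theorem rather than differentiating the measurable cocycle itself, which is exactly the gap the present paper's cocycle ergodic theorem and Poincar\'e recurrence lemma are designed to close.
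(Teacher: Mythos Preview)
Your proof is correct and is exactly the argument the paper gives: it deduces Theorem~\ref{t:IME-Carnot} by combining Theorem~\ref{t:Austin} with Theorem~\ref{t:Pansu2}, using Pansu's identification of the asymptotic cone with the associated Carnot group. Your final remark that this deduction is indirect and does not see the isomorphism through the cocycle is also precisely the paper's own motivation for proving the stronger Theorems~\ref{t:derivative} and~\ref{T:Main}.
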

 
 However this proof is not entirely satisfying as it does not `see' the group isomorphism through the IME. In his proof, Austin considers the measurable cocycle as an equivariant family of random maps between the f.g. groups that induces a sequence of measurable maps $\kappa_{x,n}$ between the associated Carnot groups indexed by the rescaling $1/n$ in the asymptotic cone construction. He then proves that with high probability a subsequence of these maps converge to a bi-Lipschitz map between the Carnot groups. Austin then asks the natural question (Question 5.2 \cite{Austin}): Is there a bi-Lipschitz group isomorphism between the Carnot groups to which this sequence of random maps converge with high probability on bounded sets? We answer this question in the affirmative.

\begin{mthm}\label{t:derivative} \hfill{}\\
Suppose $\Gamma$ and $\Lambda$ are IME f.g. nilpotent groups with associated Carnot groups $G_\infty$ and $H_\infty$. Let $\kappa_{x,n}$ be the maps as in Question 5.2 \cite{Austin}. Then there is a bi-Lipschitz group isomorphism $\Phi:G_\infty\to H_\infty$ to which $\kappa_{x,n}$ converge on bounded sets with high probability as $n\to\infty$

\[
\kappa_{x,n}\longrightarrow \Phi.
\]
\end{mthm}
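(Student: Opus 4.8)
The plan is to realize $\Phi$ as a Pansu-type derivative of the cocycle, extracted via an ergodic theorem rather than a pointwise differentiation argument. The starting point is that the IME data give us, for a.e.\ $x$, a sequence of rescaled maps $\kappa_{x,n}\colon G_\infty\to H_\infty$ (obtained by conjugating the cocycle through the dilations $1/n$) which, by Theorem~\ref{t:Austin} and the construction behind it, subconverge on bounded sets to some bi-Lipschitz map. The issue is that a priori this limit depends on $x$ and on the subsequence. To pin it down I would first invoke the nilpotent-valued cocycle ergodic theorem promised in the abstract: applied to the cocycle evaluated at a fixed group element $\gamma\in\Gamma$, it produces a measurable limit which — because the target is a nilpotent group and we are rescaling by dilations — takes values in (a one-parameter subgroup of) $H_\infty$ and is essentially constant by ergodicity of the pmp action. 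Running this simultaneously over a countable dense subgroup of $G_\infty$ (or over a generating set of $\Gamma$, then passing to the graded limit) yields a single candidate map $\Phi\colon G_\infty\to H_\infty$, independent of $x$ a.e.

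Next I would show $\Phi$ is a group homomorphism. This is where the cocycle identity does the work: $\kappa_{x,n}(gg') $ and $\kappa_{x,n}(g)\cdot\kappa_{\sigma_n(x),n}(g')$ differ by a correction term that the Poincaré recurrence lemma for nilpotent groups controls — recurrence guarantees that $\sigma_n(x)$ returns close enough to $x$ (in the appropriate sense adapted to the grading) that the two limits agree. Passing to the limit in $n$ along a good subsequence, the cocycle relation degenerates to $\Phi(gg')=\Phi(g)\Phi(g')$; compatibility with the dilations (which is automatic since each $\kappa_{x,n}$ is built to intertwine the rescalings) then forces $\Phi$ to be a graded homomorphism, i.e.\ induced by a Lie algebra map on the associated graded. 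Bi-Lipschitz control is inherited from the uniform bi-Lipschitz bounds on the $\kappa_{x,n}$ (these come from the $L^1$ integrability of the cocycle together with the subadditive/Kingman-type estimates underlying Theorem~\ref{t:Austin}), so $\Phi$ is a bi-Lipschitz bijection and hence, being a graded bi-Lipschitz homomorphism between Carnot groups, an isomorphism. Symmetry of the IME relation gives an inverse on the $\Lambda$ side, confirming $\Phi$ is an isomorphism.

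Finally, convergence on bounded sets with high probability: having identified the a.e.\ limit of every convergent subsequence as the single map $\Phi$, a standard subsequence/compactness argument (the $\kappa_{x,n}$ are uniformly bi-Lipschitz, hence precompact in the topology of uniform convergence on bounded sets) upgrades subsequential a.e.\ convergence to full convergence in measure, which is exactly the "high probability on bounded sets" statement. The main obstacle I anticipate is the second step: making the cocycle identity survive the limit requires the recurrence lemma to be quantitatively compatible with the dilation rescaling — one needs the return times and return distances to scale correctly so that the error terms $\kappa_{x,n}(g)^{-1}\kappa_{\sigma_n(x),n}(g)$ vanish in the limit along a full-measure set of $x$ and a density-one set of $n$. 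Handling the non-abelian, multi-graded structure here (as opposed to the $\bbR$-valued ergodic theorem one would use in the abelian case) is the technical heart of the argument, and is presumably where the paper's two named tools — the nilpotent cocycle ergodic theorem and the nilpotent Poincaré recurrence lemma — are deployed in tandem.
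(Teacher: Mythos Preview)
Your outline has the right ingredients but deploys them in the wrong places, and this matters. The homomorphism step does \emph{not} need the Poincar\'e recurrence lemma. Because the convergence you are proving is \emph{in measure} and the $\Gamma$-action is probability-measure-preserving, once you know $\ssc{n}{\alpha(\gamma_n,\cdot)}\to\Phi(g)$ in probability you get $\ssc{n}{\alpha(\gamma_n,\sigma_n\cdot)}\to\Phi(g)$ in probability for free, with no control on where $\sigma_n x$ lands. The cocycle identity $\alpha(\gamma_n\sigma_n,x)=\alpha(\gamma_n,\sigma_n x)\alpha(\sigma_n,x)$ together with the multiplication lemma (Lemma~\ref{L:multiplication}) then gives $\Phi(gh)=\Phi(g)\star\Phi(h)$ directly. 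Your proposed mechanism --- ``recurrence guarantees that $\sigma_n(x)$ returns close enough to $x$'' --- is neither needed nor the right picture here.

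Conversely, the step you wave through (``symmetry of the IME relation gives an inverse'') is exactly where recurrence is indispensable. Symmetry hands you a candidate $\Psi:H_\infty\to G_\infty$ built from $\beta$, but to show $\Psi\circ\Phi=\id$ you must compose: the only place $\beta(\alpha(\gamma,x),x)=\gamma$ holds is for $x\in X\cap Y\cap\gamma^{-1}(X\cap Y)$. So for each $g$ you need, on a positive-measure set of $x\in X\cap Y$, a sequence $\gamma_n$ with $\ssc{n}{\gamma_n}\to g$ \emph{and} $\gamma_n x\in X\cap Y$. That is precisely a recurrence statement ``in the direction $g$'' of the asymptotic cone, and it is the content of Lemma~\ref{l:Poincare}. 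Finally, you lean on Austin's bi-Lipschitz subconvergence for the uniform bounds and precompactness, but the paper is independent of that result: bi-Lipschitz is deduced \emph{after} the isomorphism is established (any group isomorphism of Carnot groups is bi-Lipschitz for Carnot--Carath\'eodory metrics), and the upgrade to convergence on bounded sets uses the quantitative uniformity of Proposition~\ref{p:all sequences} plus a finite cover, not Arzel\`a--Ascoli.
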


\begin{remarks} \label{rmk:A} \leavevmode
\begin{enumerate}
 \item  In \cite{Shalom} Shalom keenly observed that amongst f.g. amenable groups, quasi-isometry implies uniform measure equivalence, which in particular implies IME. Therefore Theorem \ref{t:derivative} implies Theorem \ref{t:Pansu1}. While we do not rely logically on Theorem \ref{t:Pansu2}, we do use the idea of the Pansu derivative. \label{rmk:Shalom}
 \vspace{3mm}
 \item  One might say that the isomorphism $\Phi$ is the Pansu derivative of the given measurable cocycle. Indeed, in the deterministic case $\Phi$ is the usual Pansu derivative. 
  \vspace{3mm}

 \item Theorem \ref{t:derivative} is for any Carnot-Carath\'eodory metrics on $G_\infty$ and $H_\infty$. All Carnot-Carath\'eodory metrics on a given Carnot group are bi-Lipschitz, so in what follows we may not specify the metric. Moreover $\Phi$ being a group isomorphism implies it is bi-Lipchitz.
 
 \end{enumerate}
\end{remarks}

Theorem \ref{t:derivative} is an immediate consequence of Theorem \ref{T:Main}, which has the spirit of a nilpotent-valued cocycle ergodic theorem.

\begin{mthm}\label{T:Main}\hfill{}\\
Let $\Gamma$,$\Lambda$ be f.g. IME nilpotent groups with associated cocycles $\alpha:\Gamma\times X\to\Lambda$ and $\beta:\Lambda\times Y\to\Gamma$, and let $G_\infty$ and $H_\infty$ be the associated Carnot groups of $\Gamma$ and $\Lambda$. Then there exists a bi-Lipschitz group isomorphism $\Phi:G_\infty\to H_\infty$ so that for all $g\in G_\infty$

\[
 (n,\gamma_n)\longrightarrow g \qquad \text{implies} \qquad (n,\alpha(\gamma_n,x))\longrightarrow \Phi(g) 
\]

where the convergence is in the sense of the asymptotic cone, and the second convergence is in measure. The same is true after exchanging the roles of $\Gamma$,$\Lambda$, $\alpha$,$\beta$, and $\Phi$,$\Phi^{-1}$.
\end{mthm}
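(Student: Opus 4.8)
The plan is to construct $\Phi$ by a layer‑by‑layer limiting procedure up the lower central series, prove it is a graded (hence bi‑Lipschitz) homomorphism of Carnot groups, upgrade the convergence to hold along \emph{every} realizing sequence, and finally produce the inverse from $\beta$. Throughout I may assume the two p.m.p.\ actions are ergodic (ergodic decomposition, or take the $\mathrm{ME}$‑coupling ergodic), so that the Birkhoff/Kingman limits below are a.e.\ constant; since the target convergence is only in measure this is the only regularity I will need. Recall that $G_\infty$ carries a grading $\mfg_\infty=V_1\oplus\cdots\oplus V_s$ with dilations $\delta_t$ (acting by $t^i$ on $V_i$), similarly $\mfh_\infty=W_1\oplus\cdots\oplus W_s$ for $H_\infty$, that there are rescaled embeddings $\Gamma\hookrightarrow G_\infty$, and that $(n,\gamma_n)\to g$ means the images of $\gamma_n$ under the scale‑$n$ embedding converge to $g$. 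I will also use the cocycle identity $\alpha(\gamma_1\gamma_2,x)=\alpha(\gamma_1,\gamma_2 x)\,\alpha(\gamma_2,x)$ and the integrability hypothesis $\int_X|\alpha(\gamma,x)|_\Lambda\,d\mu(x)<\infty$ for each $\gamma$.

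\emph{Step 1: a length bound and the first layer.} Applying Kingman's subadditive ergodic theorem to $n\mapsto|\alpha(\gamma^n,x)|_\Lambda$ gives an a.e.\ linear drift $\ell(\gamma)\ge 0$, and subadditivity of $\gamma\mapsto\ell(\gamma)$ (from the triangle inequality for $|\cdot|_\Lambda$ together with the cocycle identity) makes $\ell$ a (possibly degenerate) seminorm dominated by a multiple of $|\cdot|_\Gamma$. Consequently $|\alpha(\gamma_n,x)|_\Lambda\lesssim n$ in measure whenever $|\gamma_n|_\Gamma\lesssim n$, so $\delta_{1/n}\alpha(\gamma_n,x)$ stays in a fixed compact subset of $H_\infty$; this is what keeps all limits below alive. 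For $g\in V_1$, represent $g$ by a sequence $\gamma^n$ and pass to abelianizations: the $\Lambda^{\mathrm{ab}}\otimes\bbR=W_1$‑valued cocycle is additive, so Birkhoff's theorem yields $\tfrac1n\overline{\alpha(\gamma^n,x)}\to D_1([\gamma])$ for a linear map $D_1\colon V_1\to W_1$. One then checks that the level‑$i$ coordinates ($i\ge2$) of $\delta_{1/n}\alpha(\gamma^n,x)$ vanish in measure: these coordinates are iterated‑commutator ("Lévy–area") sums of the cocycle increments $\alpha(\gamma,\gamma^k x)$, and because the partial sums of the increments concentrate along the single direction $D_1([\gamma])$ with $o(n)$ fluctuation (Birkhoff plus a maximal inequality), those sums are $o(n^i)$. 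This is exactly the Pansu‑derivative heuristic: the abelianization controls the leading term at the top layer. Hence any $\Phi$ satisfying the theorem must restrict to $D_1$ on $V_1$.

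\emph{Step 2: higher layers via commutators and the nilpotent Poincar\'e recurrence lemma.} Since $G_\infty$ is generated by $V_1$ with its bracket relations, define $\Phi$ to be the graded Lie‑group homomorphism extending $D_1$, and verify the cocycle realizes it. The model case is $g=[v,w]$ with $v,w\in V_1$: standard asymptotic‑cone bookkeeping gives explicit words of length $O(n)$, built from powers of generators, whose scale‑$n$ images converge to $g$, of the schematic form $[\gamma_n,\lambda_n]$. Expanding $\alpha([\gamma_n,\lambda_n],x)$ by the cocycle identity produces a bounded‑length product of terms $\alpha(\gamma_n,\cdot)^{\pm1}$, $\alpha(\lambda_n,\cdot)^{\pm1}$ evaluated at the translates $x,\lambda_n^{-1}x,\gamma_n^{-1}\lambda_n^{-1}x,\dots$, which are \emph{not} close to $x$. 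Here the nilpotent Poincar\'e recurrence lemma is invoked to choose the realizing sequences — along a subsequence, or restricted to a positive‑measure "good" set — so that every translate appearing in the expansion returns to a small neighborhood of $x$; on such returns the cocycle at the translate differs from its value at $x$ by a term bounded independently of $n$ (using the $L^1$‑control and that a cocycle is, up to a coboundary of controlled size, locally constant in measure). After $\delta_{1/n^2}$‑rescaling the product therefore collapses to the $H_\infty$‑commutator $[\,\delta_{1/n}\alpha(\gamma_n,x),\,\delta_{1/n}\alpha(\lambda_n,x)\,]\to[\Phi(v),\Phi(w)]$. Iterating this up the lower central series shows $\Phi$ is a graded homomorphism and that $(n,\alpha(\gamma_n,x))\to\Phi(g)$ along a spanning family of sequences.

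\emph{Step 3: all sequences, and invertibility.} For an arbitrary $\gamma_n$ with $(n,\gamma_n)\to g$, compare with a standard realizing sequence $\gamma'_n\to g$: by the cocycle identity $\alpha(\gamma_n,x)=\alpha(\gamma'_n,(\gamma'_n)^{-1}\gamma_n x)\,\alpha((\gamma'_n)^{-1}\gamma_n,x)$, the last factor has length $o(n)$ (since $|(\gamma'_n)^{-1}\gamma_n|_\Gamma=o(n)$ and $\ell$ is controlled), and the first factor — whose base point moves but whose scaled limit is the a.e.\ constant $\Phi(g)$ by ergodicity, made precise again by recurrence into a good set — also tends to $\Phi(g)$; so both sequences share the limit $\Phi(g)$, which simultaneously establishes well‑definedness and, directly from the cocycle identity with two moving factors, the homomorphism property $\Phi(gh)=\Phi(g)\Phi(h)$. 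Lipschitzness is the drift bound $|\Phi(g)|\le\ell(g)\lesssim|g|$. Running the whole construction for $\beta$ yields $\Psi\colon H_\infty\to G_\infty$; the $\mathrm{ME}$ coupling makes the composite cocycle built from $\beta$ and $\alpha$ cohomologous to the identity cocycle of $\Gamma$ via a transfer function of controlled (sub‑linear after rescaling) growth, so $\Psi\circ\Phi=\id_{G_\infty}$ and symmetrically $\Phi\circ\Psi=\id_{H_\infty}$. Thus $\Phi$ is an isomorphism of Carnot groups, automatically bi‑Lipschitz (Remark \ref{rmk:A}), and Theorem \ref{t:derivative} follows by specializing to sequences $\gamma_n$ with $(n,\gamma_n)\to g$ ranging over a bounded set and unwinding the definition of $\kappa_{x,n}$.

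The main obstacle is Step 2: controlling the cocycle at the \emph{far‑away} base points forced by the commutator expansions, which is precisely the role of the nilpotent Poincar\'e recurrence lemma, interlocked with the "abelianization governs the leading term at each layer" estimates that make the non‑abelian corrections collapse after rescaling; keeping the in‑measure convergence uniform enough for $\Phi$ to be unambiguously defined is the secondary difficulty.
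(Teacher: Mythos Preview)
Your overall architecture matches the paper's: control the abelianization by an ergodic theorem, show the higher-layer coordinates of $\alpha(\gamma^n,x)$ are $o(n)$ (the paper's Lemma~3.4/Lemma~3.7, your ``L\'evy-area'' heuristic), pass to general sequences, prove the homomorphism property, and use $\beta$ for invertibility. The commutator-word route in your Step~2 is a legitimate alternative to the paper's product route (Theorem~4.1), and would yield the same $\Phi$.

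The genuine gap is that you have \emph{misplaced} the Poincar\'e recurrence lemma. In Step~2 you invoke it to tame the moving base points $\lambda_n^{-1}x,\ \gamma_n^{-1}\lambda_n^{-1}x,\dots$ in the expansion of $\alpha([\gamma_n,\lambda_n],x)$, arguing that ``on such returns the cocycle at the translate differs from its value at $x$ by a term bounded independently of $n$.'' This reasoning is wrong: the cocycle is not continuous in the base point in any useful sense on an abstract pmp space. The correct (and much simpler) observation is that since the action is measure-preserving and the target convergence is \emph{in measure}, the random variable $\alpha(\gamma_n,\sigma_n x)$ has the same distribution as $\alpha(\gamma_n,x)$ for any $\sigma_n\in\Gamma$; so each factor in the expansion converges in measure to the expected limit, and Lemma~\ref{L:multiplication} multiplies them. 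This is exactly how the paper handles moving base points in the proof of Theorem~4.1 and in the homomorphism argument; no recurrence is needed there. (Incidentally the rescaling in Step~2 should be $\delta_{1/n}$, not $\delta_{1/n^2}$.)

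Conversely, your invertibility argument in Step~3 is where recurrence is genuinely required, and your sketch glosses over it. The assertion that the composite of $\beta$ and $\alpha$ is ``cohomologous to the identity via a transfer function of sub-linear growth'' is not established and is not how the coupling behaves: one only has $\beta(\alpha(\gamma,x),x)=\gamma$ when $x\in X\cap Y$ \emph{and} $\gamma\cdot x\in X\cap Y$. The paper's Lemma~\ref{l:Poincare} is used precisely to produce, for each $g\in G_\infty$ and for a positive-measure set of $x\in X\cap Y$, elements $\gamma_n$ with $\tfrac1n\bullet\gamma_n\to g$ and $\gamma_n\cdot x\in X\cap Y$, so that the exact identity can be applied and $\Psi\circ\Phi=\id$ follows. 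Without this, your Step~3 does not close. A secondary issue: defining $\Phi$ as ``the graded homomorphism extending $D_1$'' presupposes that $D_1$ respects all bracket relations in $\mfg_\infty$, which is exactly the well-definedness statement the paper proves (the Corollary after Proposition~\ref{p:all sequences}); your Step~2, once repaired as above, would establish this, but as written it is circular.
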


See $\S$\ref{ss:graded lie algebra} for the definition of convergence in the asymptotic cone.

\begin{remarks} \leavevmode
\begin{enumerate}
\item Convergence in measure is the best one can hope for given the $L^1$ integrability assumption. To have pointwise convergence even in case $\Gamma=\Lambda=\bbZ^d$ one must assume $L^{d,1}$ (Lorentz-space) integrability. The correct integrability assumption for pointwise convergence of ergodic theorems for nilpotent groups is commonly believed to be related to the growth type of the group.
  \vspace{.01mm}
\item All of the theorems stated above are true for f.g. \emph{polynomial growth} groups, which by \cite{Gromov} are those groups with finite index nilpotent subgroups. Theorem \ref{T:Main} is insensitive to finite index and finite kernels, so we reduce to the torsion-free nilpotent case. See \S\ref{ss:reduction}.
\end{enumerate}
\end{remarks}

The proof of Theorem \ref{T:Main} is a natural extension of ideas developed in the author's thesis \cite{CF}. The idea is that, following Pansu \cite{Pansu}, the large scale geometry of f.g. nilpotent groups depends only on the behavior of the projection to abelianization. Therefore, to understand the large scale geometric behavior of a random map $\alpha(\cdot,x):\Gamma\to\Lambda$, we project it to the abelianization and integrate. Since a section of the abelianization generates the whole group, we can write all elements in terms of that section. We then use the cocycle identity to decompose arbitrary elements into a product of those coming from (a section of) the abelianization, which allows us to promote the cocycle ergodic theorem for cocycles with values in $\bbR^d$, which is easy,  to the desired cocycle ergodic theorem with values in $\Lambda$. 

We remark that while it is almost immediate that the limiting map $\Phi$ is a homomorphism, the nilpotent Poincar\'e recurrence Lemma \ref{l:Poincare} is needed to show that $\Phi$ has (the obvious candidate as) an inverse.

The rest of the paper is organized as follows. The second section sets notation, gathers background information regarding nilpotent groups, asymptotic cones, and measure equivalence, and reduces to the torsion-free nilpotent case. In section 3 we study asymptotics along iterates of a single element. In section 4 we combine the results of \S 3 with Lemma \ref{L:multiplication} to understand asymptotics along arbitrary elements. Finally in section 5 we define $\Phi$, prove Theorem \ref{T:Main}, and deduce Theorem \ref{t:derivative}.

On a first reading of this paper, one may wish to skip the proofs in subsection \ref{ss:nilgeom}, as the statements are intuitive. Also, one may wish to skip the proofs in section \ref{s:iterates}, which are the most technical part of the paper.

We conclude the introduction by noting that, in light of Remark \ref{rmk:A} \eqref{rmk:Shalom}, one might hope to develop a nilpotent IME rigidity theory parallel to that of quasi-isometry (\cite{Shalom}, \cite{Sauer}, \cite{KP}).

\subsection*{Acknowledgements}
I would like to express my sincere gratitude to Tim Austin and to my advisor Alex Furman.

%SECTION
\section{Background and Notation} \label{Background}

\subsection{Integrable Measure Equivalence}\label{ss:ime}

Two infinite discrete countable groups $\Gamma$, $\Lambda$ are \textbf{measure equivalent} if there exists an infinite measure space $(\Omega,m)$ with a measurable, measure preserving action of $\Gamma\times\Lambda$ so that the actions $A:\Gamma\curvearrowright (\Omega,m)$ and $B:\Lambda\curvearrowright (\Omega,m)$ admit finite measure fundamental domains $Y,X\subset \Omega$. The space $(\Omega,m)$ together with the $\Gamma\times\Lambda$ action is called a measurable coupling of $\Gamma$ and $\Lambda$. By restricting attention to an ergodic component, one may always assume that $m$ is ergodic for the $\Gamma\times\Lambda$ action.  

The fundamental domains $Y$ and $X$ for the $G$ and $H$ actions give rise to functions

\[
\alpha:\Gamma\times X\longrightarrow \Lambda \qquad \text{and} \qquad \beta:\Lambda\times Y\longrightarrow \Gamma
\]

defined uniquely by requiring 

\[
B(\lambda) y\in A(\beta(\lambda,y)^{-1}) Y \qquad \text{and} \qquad A(g) x \in B(\alpha(\gamma,x)^{-1}) X \quad~ \forall x\in X~ \forall y\in Y.
\]

There are associated finite measure preserving actions $\Gamma\curvearrowright (X,m|_X)$ and $\Lambda\curvearrowright (Y,m|_Y)$ (whose actions we denote by $\cdot$) defined by requiring that

\[
A(\gamma)x=B(\alpha(\gamma,x)^{-1})(\gamma\cdot x)\qquad \text{and} \qquad  B(\lambda)y=A(\beta(\lambda,y)^{-1}) (\lambda \cdot y).
\]

If $m$ is ergodic for $\Gamma\times\Lambda$ then the actions $\Gamma\curvearrowright (X,m|_X)$ and $\Lambda\curvearrowright (Y,m|_Y)$ are ergodic. We may assume after renormalizing that both $m|_X$ and $m|_Y$ are probability measures. Finally, $\alpha$ and $\beta$ are \textbf{measurable cocycles} over the pmp actions in the sense that

\begin{equation}\label{eq:cocycles}
\alpha(\gamma_1\gamma_2,x)=\alpha(\gamma_1,\gamma_2\cdot x)\alpha(\gamma_2,x) \qquad \text{and} \qquad \beta(\lambda_1\lambda_2,y)=\beta(\lambda_1,\lambda_2\cdot y)\beta(\lambda_2,y)
\end{equation}

for all $\gamma_1,\gamma_2\in\Gamma$, $\lambda_1,\lambda_2\in\Lambda$ and for $m$ a.e. $x\in X$, $y\in Y$. Most of our reasoning will be about these cocycles. 

Replacing the fundamental domain $Y$ with one of its $H$ translates only translates the cocycle $\beta$. Since countably many translates of $Y$ cover $\Omega$, we may therefore assume that $m(X\cap Y)>0$. Moreover, (see \cite{Austin} for more details) if 

\[
x\in X\cap Y \cap \gamma^{-1}(X\cap Y)
\]

then

\[
\beta(\alpha(\gamma,x),x)=\gamma.
\]

Given finitely generated groups $\Gamma,\Lambda$, a cocycle $\alpha:\Gamma\times X\to\Lambda$ over a pmp action $\Gamma\curvearrowright (X,\mu)$ is \textbf{integrable} if, for some (any) choice of finite generating set for $\Lambda$

\[
\|\abs{\alpha(\gamma,\cdot)}_\Lambda\|_1=\int_X \abs{\alpha(\gamma,x)}_\Lambda d\mu(x)<\infty \qquad \forall \gamma\in\Gamma
\]

where $\abs{\cdot}_\Lambda$ is the word norm associated to the generating set. The subadditivity of $\abs{\cdot}_\Lambda$ implies

\[
\|\abs{\alpha(\gamma,\cdot)}_\Lambda\|_1\le \abs{\gamma}_\Gamma \cdot \max_{s\in S}\|\abs{\alpha(s,\cdot)}_\Lambda\|_1
\]

where $\abs{\cdot}_\Gamma$ is any word norm associated to a finite generating set for $\Gamma$. 

Finally, finitely generated groups $\Gamma$ and $\Lambda$ are \textbf{integrably measure equivalent} if they admit a measurable coupling so that the associated cocycles \eqref{eq:cocycles} are integrable. This is an equivalence relation independent of choice of generating sets. For more details, see \cite{Furman}.

Recall that measurable events $E_n \subset (X,m)$ occur with high probability (whp) if $m(E_n)\to 1$ as $n\to\infty$. We say that a sequence of measurable functions $f_n:X\to [0,\infty)$ is $o(n)$ in probability (or `whp') if for all $\epsilon>0$ one has $m(f_n(x)/n<\epsilon)\to 1$ as $n\to\infty$. Thus for example $d_\Lambda(\alpha(\gamma^n,x),\lambda)=o(n)$ in probability means that for all $\epsilon,\delta>0$ there is $N$ so that for all $n\ge N$ one has $m(d_\Lambda(\alpha(\gamma^n,x),\lambda)<n\delta)<\epsilon$. Similarly for $O(n)$.

%SUBSECTION
\subsection{The Associated Graded Lie Algebra}\label{ss:graded lie algebra}
Let $\Gamma$ be a finitely generated torsion free nilpotent group. Recall that by a theorem of Mal'cev \cite{Malcev} there is a unique connected, simply connected nilpotent Lie group $G$, called the Mal'cev completion of $\Gamma$, in which $\Gamma$ embeds as a (necessarily cocompact) lattice.

Since $G$ is simply-connected, the exponential map $\exp:\mfg:=Lie(g)\to G$ from the Lie algebra of $G$ to $G$ is a diffeomorphism, so we can work with the Lie algebra.

Let $\mfg$ be the Lie algebra of $G$, and set
\[
	\mfg^1:=\mfg, \qquad \mfg^{i+1}:=[\mfg,\mfg^{i}].
\]
Being nilpotent, $G$ satisfies $\mfg^{r+1}=\{0\}$ for some $r \in \bbN$. 
Since $[\mfg^i,\mfg^j]\subset \mfg^{i+j}$ the Lie bracket on $\mfg$ defines a bilinear map 
\[
	\left(\mfg^{i}/\mfg^{i+1}\right)\otimes \left(\mfg^{j}/\mfg^{j+1}\right)\ \overto{}\ (\mfg^{i+j}/\mfg^{i+j+1}),
\]
which can then be used to define the Lie bracket $[-,-]_\infty$ on 
\begin{equation}\label{e:frakv-decomp}
	\mfg_{\infty}:=\bigoplus_{i=1}^{r} \frak{v}_i,\qquad\textrm{where}\qquad \frak{v}_i:=\mfg^i/\mfg^{i+1}
\end{equation}
by extending the above maps linearly.

The resulting pair $(\mfg_\infty,[-,-]_\infty)$ is called the \textbf{graded Lie algebra} associated with $\mfg$.
Note that the linear maps
\[
	\delta_t:\mfg_\infty \to \mfg_\infty,\qquad \delta_t(v_1,\dots,v_{r})=(t\cdot v_1,t^2\cdot v_2,\dots,t^{r}\cdot v_{r}),
\]
satisfy $\delta_t([v,w]_\infty)=[\delta_t(v),\delta_t(w)]_\infty$ and $\delta_{ts}=\delta_t\circ \delta_s$ 
for $v,w\in\mfg_\infty$, $t,s>0$. Hence $\{\delta_t \mid t>0\}$ is a one-parameter family of automorphisms of the Lie algebra $\mfg_\infty$, and therefore define a one-parameter family of automorphisms of the Lie group $G_\infty:=\exp_\infty(\mfg_\infty)$,
that we will still denote by $\{\delta_t \mid t>0\}$. (Here we denote the exponential map $\mfg_\infty\to G_\infty$
by $\exp_\infty$ to distinguish it from $\exp:\mfg\to G$).

Choose a splitting of $\mfg$ as a direct sum of vector subspaces 
\begin{equation}\label{e:V-decomp}
	\mfg=V_1\oplus\cdots\oplus V_{r},\qquad\textrm{so\ that}\qquad
	\mfg^i=V_i\oplus\cdots\oplus V_{r},
\end{equation} 

and choose a vector space identification
$L:\mfg\to\mfg_\infty$ so that $L(V_i)=\frak{v}_i$ the $i$th summand of $\mfg_\infty$.
For $t>0$ define the vector space automorphism $\delta_t$ of $\mfg$ 
by $\delta_t(v)=t^i\cdot v$ for $v\in V_i$ ($i=1,\dots,r$). Note that $\{\delta_t\mid t>0\}$ are \textit{not}  Lie algebra automorphisms of $\mfg$ in general. Nevertheless they induce maps $\{\delta_t\mid t>0\}$ from $G$ to $G$ which we still denote $\delta_t$. Note also that the maps $\delta_t$ defined on $\mfg$ and on $\mfg_\infty$ are conjugate through $L$.

Now the Lie bracket $[-,-]_t$ on $\mfg$, given by 
\[
	[v,w]_t:=\delta_{\frac{1}{t}}\left([\delta_t(v),\delta_t(w)]\right),
\]
defines a Lie algebra structure on $\mfg$ that is isomorphic to the original $[-,-]=[-,-]_1$ via $\delta_t$.

However, one has
\[
	[L(v),L(w)]_\infty=\lim_{t\to\infty} [v,w]_t
\]
due to the fact that for $v\in V_i$, $w\in V_j$ the "leading term" of $[v,w]$ lies in $V_{i+j}$,
while the higher terms that belong to $V_{i+j+1}\oplus\cdots\oplus V_{r}$ 
become insignificant under the rescaling (see \cite{Pansu}).  
Using the $\log:G\to \mfg$ and $\exp_\infty:\mfg_\infty\to G_\infty$ maps 
we obtain a family of maps

\begin{equation}\label{e:sclt}
	\scl{t}{-}:\Gamma\ \overto{<}\ G\ \overto{\log}\ \mfg\ \overto{\delta_{t^{-1}}}\ 
	\mfg\ \overto{L}\ \mfg_\infty\ \overto{\exp_\infty}\ G_\infty\qquad (t>0)
\end{equation}
that explains the asymptotic cone description of Pansu \cite{Pansu} as follows.

Let $d$ be an inner left-invariant metric $d$ on $\Gamma$ and  
\[
	(\Gamma,\frac{1}{t}\cdot d,e)\ \overto{GH}\ (G_\infty,d_\infty,e)
\]
the Gromov-Hausdorff convergence. Then a sequence $\gamma_i\in \Gamma$, rescaled by $t_i^{-1}$
with $t_i\to\infty$ as $i\to\infty$, 
converges to $g\in G_\infty$ iff $\scl{t_i}{\gamma_i}\to g$ in $G_\infty$.

We shall often write 
\[
	g=\lim_{i\to\infty} \ssc{t_i}{\gamma_i}\qquad\textrm{instead\ of}\qquad \scl{t_i}{\gamma_i}\to g.
\]
The metric part of the statement shows that for $t_i\to\infty$ and $\gamma_i,\gamma'_i\in \Gamma$
\begin{equation}\label{e:lengthscaling}
	g=\lim_{i\to\infty}\ \ssc{t_i}{\gamma_i},\quad g'=\lim_{i\to\infty}\ \ssc{t_i}{\gamma'_i}
	\qquad
	\Longrightarrow
	\qquad
	d_\infty(g,g')=\lim_{i\to\infty} \frac{1}{t_i}\cdot d(\gamma_i,\gamma'_i).
\end{equation}
The limiting distance $d_\infty$ on $G_\infty$ is \textbf{homogeneous} in the sense that
\[
	d_\infty(\delta_s(g),\delta_s(g'))=s\cdot d_\infty(g,g')\qquad (g,g'\in G_\infty,\ s>0).
\]	
This distance is left-invariant (this follows from Lemma~\ref{L:multiplication}). The distance $d_\infty$ arises from the sub-Finsler Carnot-Carath\'eodory construction.

\begin{lemma}\label{L:multiplication}\hfill{}\\
	Given sequences $t_i\to\infty$, $\gamma_i,\gamma'_i\in\Gamma$ with $\ssc{t_i}{\gamma_i}\to g$ and $\ssc{t_i}{\gamma'_i}\to g'$
	then $\ssc{t_i}{\gamma_i\gamma'_i}\to gg'$.
\end{lemma}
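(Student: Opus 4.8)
The plan is to push everything into the Lie algebra, where products are governed by the Baker--Campbell--Hausdorff (BCH) formula, and then to exploit the single algebraic fact that $\delta_{1/t}$ is a \emph{Lie algebra isomorphism} from $(\mfg,[-,-])$ onto $(\mfg,[-,-]_t)$. Write $\log_\infty\colon G_\infty\to\mfg_\infty$ for the inverse of $\exp_\infty$. Unwinding the definition \eqref{e:sclt}, the hypothesis $\ssc{t_i}{\gamma_i}\to g$ says precisely that $L\bigl(\delta_{1/t_i}(\log\gamma_i)\bigr)\to\log_\infty(g)$ in $\mfg_\infty$; equivalently, setting $u_i\defq\delta_{1/t_i}(\log\gamma_i)\in\mfg$, we have $u_i\to L^{-1}(\log_\infty g)$, and likewise $v_i\defq\delta_{1/t_i}(\log\gamma'_i)\to L^{-1}(\log_\infty g')$. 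Since $\Gamma\subset G$ and $\log$ is a diffeomorphism onto $\mfg$, BCH gives $\log(\gamma_i\gamma'_i)=\log\gamma_i\ast\log\gamma'_i$, where $\ast$ is the BCH product for $[-,-]$ (a genuine polynomial, since $\mfg$ is nilpotent of class at most $r$). Thus $\ssc{t_i}{\gamma_i\gamma'_i}=\exp_\infty L\bigl(\delta_{1/t_i}(\log\gamma_i\ast\log\gamma'_i)\bigr)$, and it suffices to identify the limit of $\delta_{1/t_i}(\log\gamma_i\ast\log\gamma'_i)$ in $\mfg$.

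The key step is the intertwining identity. Because $\delta_t\colon(\mfg,[-,-]_t)\to(\mfg,[-,-])$ is a Lie algebra isomorphism — immediate from $[v,w]_t=\delta_{1/t}\bigl([\delta_t v,\delta_t w]\bigr)$ — its inverse $\delta_{1/t}$ carries the BCH product $\ast$ of $[-,-]$ to the BCH product $\ast_t$ of $[-,-]_t$:
\[
\delta_{1/t}(X\ast Y)=\delta_{1/t}(X)\ast_t\delta_{1/t}(Y),\qquad X,Y\in\mfg.
\]
Hence $\delta_{1/t_i}(\log\gamma_i\ast\log\gamma'_i)=u_i\ast_{t_i}v_i$. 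Now I would combine three ingredients: (i) $u_i\to L^{-1}(\log_\infty g)$ and $v_i\to L^{-1}(\log_\infty g')$; (ii) the brackets $[-,-]_t$ converge, as $t\to\infty$, to a bracket on $\mfg$ that $L$ identifies with $[-,-]_\infty$, this being exactly the limit formula recorded in the text; (iii) the BCH product is a \emph{fixed} universal Lie polynomial of bounded degree, so $(b,X,Y)\mapsto X\ast_b Y$ depends jointly continuously on the bracket $b$ and the two arguments. Together these give
\[
u_i\ast_{t_i}v_i\ \longrightarrow\ L^{-1}(\log_\infty g)\ast_\infty' L^{-1}(\log_\infty g'),
\]
where $\ast_\infty'$ is the BCH product of the limiting bracket $[-,-]_\infty'\defq\lim_{t}[-,-]_t$ on $\mfg$. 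Applying $\exp_\infty\circ L$ and using that $L$ intertwines $[-,-]_\infty'$ with $[-,-]_\infty$, hence $\ast_\infty'$ with $\ast_\infty$, the right-hand side becomes $\exp_\infty\bigl(\log_\infty(g)\ast_\infty\log_\infty(g')\bigr)=gg'$, which is exactly $\ssc{t_i}{\gamma_i\gamma'_i}\to gg'$.

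The only genuinely delicate point is ingredient (iii): one must check that the BCH series terminates at a degree independent of $t$ — this is where it matters that $\delta_t$ preserves the nilpotency class, so that $\ast_t$ is given by the same truncated Lie polynomial for every $t\in(0,\infty]$ — and that the coefficients of that polynomial depend continuously (in fact polynomially) on the structure constants of the bracket, which is what makes the joint limit step legitimate. Everything else is bookkeeping with the diffeomorphisms $\log$, $\exp_\infty$ and the fixed linear isomorphism $L$, together with the observation that the hypotheses of the lemma are nothing but convergence statements for the $u_i$ and $v_i$ in $\mfg$.

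I would note that this argument also yields the left-invariance of $d_\infty$ asserted in the text: given $g=\lim\ssc{t_i}{\gamma_i}$, $g'=\lim\ssc{t_i}{\gamma'_i}$, $h=\lim\ssc{t_i}{\eta_i}$, the lemma gives $hg=\lim\ssc{t_i}{\eta_i\gamma_i}$ and $hg'=\lim\ssc{t_i}{\eta_i\gamma'_i}$, so by \eqref{e:lengthscaling} and left-invariance of $d$ on $\Gamma$ we get $d_\infty(hg,hg')=\lim\frac1{t_i}d(\eta_i\gamma_i,\eta_i\gamma'_i)=\lim\frac1{t_i}d(\gamma_i,\gamma'_i)=d_\infty(g,g')$.
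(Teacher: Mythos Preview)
Your argument is correct and is precisely the Baker--Campbell--Hausdorff computation the paper has in mind: the paper's own proof is the one-line reference ``This follows from the Baker--Campbell--Hausdorff formula (cf.\ \S3.3 and the proof of Lemma~5.5 in \cite{Bre})'', and you have simply unpacked that citation by making explicit the intertwining $\delta_{1/t}(X\ast Y)=\delta_{1/t}X\ast_t\delta_{1/t}Y$ and the joint continuity of the truncated BCH polynomial in the bracket and the arguments. Your concluding remark on left-invariance of $d_\infty$ is also exactly what the paper asserts parenthetically.
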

\begin{proof}
	This follows from the Baker-Campbell-Hausdorff formula (cf. \S 3.3 and the proof of Lemma~5.5 in \cite{Bre}).
\end{proof}

%SUBSECTION
\subsection{Logarithmic Coordinates}
We will use the so called \textit{logarithmic coordinates} throughout this paper, which are described as follows. Choose a real basis $\{X_1,\ldots ,X_m\}$ for $\mfg$ that respects the decomposition \eqref{e:V-decomp}. When we write $g=(x_1,\ldots ,x_m)\in G$ we mean that $g=\exp(x_1X_1+\cdots x_mX_m)$. These are the \textbf{logarithmic coordinates} of $G$. Thus if $g=(x_1,\ldots ,x_m)$ and $h=(x_1',\ldots ,x_m')$ then the product $gh=(y_1,\ldots ,y_m)$ where 
\[
\exp(x_1X_1+\cdots +x_mX_m)\exp(x_1'X_1+\cdots x_1'X_m)=\exp(y_1X_1+\cdots y_mX_m).
\] 

 In light of the vector space isomorphism $L:\mfg\to\mfg_\infty$ the basis for $\mfg$ yields a basis for $\mfg_\infty$ that respects the decomposition \eqref{e:frakv-decomp}. Throughout this paper we will think of $\mfg$ and $\mfg_\infty$ as occupying the same real vector space, only with different Lie brackets $[-,-]$ and $[-,-]_\infty$. We also use the logarithmic coordinates for $\mfg_\infty$, the only difference in definition being the Lie bracket.

Let $d=\dim V_1$. Then there exist constants $\eta_{i}\in\bbN$, $d<i \le m$ so that $\Gamma$ embeds in $G$ in logarithmic coordinates as

\[
\Gamma=\left \{ {(a_1,\ldots ,a_d,\eta_{d+1}a_{d+1},\ldots ,\eta_{m}a_m):a_i\in\bbZ}\right\}<G.
\]

Thus we have identified $\Gamma<G\equiv \bbR^m\equiv G_\infty$. Therefore we think of $\Gamma<G$ and $G_\infty$ as occupying the same copy of $\bbR^m$. We denote the group product in $\Gamma<G$ by $g\cdot h$ or simply by $gh$, and the group product in $G_\infty$ by $g\star h$. We will always denote a word norm on a discrete nilpotent group $\Gamma$ or $\Lambda$ by $\abs{\cdot}_\Gamma$ or $\abs{\cdot}_\Lambda$, a word norm on a nilpotent Lie group $G$ or $H$ by $\abs{\cdot}_G$ or $\abs{\cdot}_H$ and a Carnot-Carath\'eodory norm on a graded nilpotent Lie group $G_\infty$ or $H_\infty$ by $\abs{\cdot}_\infty$, and their associated metrics $d_\Gamma$, $d_\Lambda$, $d_H$, $d_H$, and $d_\infty$. Thus we can without notational ambiguity omit the linear identification $L:G\equiv G_\infty$. For example if $\gamma,\sigma\in\Gamma$ then $\abs{\gamma}_\infty$ means unambiguously $\abs{L\gamma}_\infty$ and $\gamma \star \sigma$ means $L\gamma \star L\sigma$.

Since $V_1\cong \mfg/\mfg^2$, the sets

\[
\{ (x_1,\ldots ,x_d,0,\ldots ,0)\in G\}\cong\bbR^d \qquad \text{and} \qquad \{(a_1,\ldots ,a_d,0,\ldots ,0)\in\Gamma\}\cong\bbZ^d
\]

are complete sets of coset representatives for $G/G^2$, $G_\infty/G_\infty^2$ and (the torsion-free part of) $\Gamma/\Gamma^2$. We will use these choices of coset representatives in the arguments that follow. We define the projections on to the abelian and commutator coordinates for $\Gamma$, $G$, and $G_\infty$ by

\begin{align*}\label{eq:projections}
\pi_{ab}(a_1,\ldots ,a_m)&=(a_1,\ldots ,a_d,0,\ldots ,0) \\
\pi_{com}(a_1,\ldots ,a_m)&=(0,\ldots ,0,a_{d+1},\ldots ,a_m).
\end{align*}

%SUBSECTION
\subsection{Some Nilpotent Geometry}\label{ss:nilgeom}

We now collect some basic nilpotent geometry facts. We make no claim to originality in this subsection.

We will use the following Lemma of Guivarc'h repeatedly throughout this paper to simplify our arguments. Since asymptotic statements are not sensitive to quasi-isometry, the Guivarc'h Lemma allows us to prove asymptotic statements for only one of $(H,d_H)$ or $(H_\infty,d_{\infty})$.

\begin{lemma}[Guivarc'h \cite{Gui}; see also \cite{Bre} Theorem 3.7]\label{l:Gui}
Let $K$ be a compact neighborhood of the identity in a simply connected nilpotent Lie group $G$ and $d_G(g,h)=\inf\{n\ge 1: g^{-1}h\in K^n\}$. Then for any homogeneous quasi-norm $\abs{\cdot}$ on $G$ there is a constant $C>0$ so that

\[
\frac{1}{C}\abs{g}\le d_G(e,g)\le C\abs{g}+C.
\]
\end{lemma}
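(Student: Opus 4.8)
The plan is to prove the two inequalities separately, passing through the Mal'cev/logarithmic coordinates $G\equiv\bbR^m$ and the grading $\mfg=V_1\oplus\cdots\oplus V_r$ together with the scaling automorphisms $\{\delta_t\}$. Since any two homogeneous quasi-norms on $G$ are comparable (a standard compactness argument on the unit sphere, using homogeneity $|\delta_t g|=t|g|$ and the quasi-triangle inequality), it suffices to work with one convenient choice, say the quasi-norm $|g|=\sum_{i=1}^r \|v_i\|^{1/i}$ where $\log g=v_1+\cdots+v_r$ with $v_i\in V_i$ and $\|\cdot\|$ a fixed Euclidean norm on each $V_i$; one checks directly that this is homogeneous under $\delta_t$ up to a bounded multiplicative constant. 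Likewise the word metric $d_G(e,g)=\inf\{n\ge 1: g\in K^n\}$ is, up to bounded multiplicative and additive constants, independent of the choice of compact generating neighborhood $K$, so I may take $K$ to be a fixed symmetric convex body around $e$.

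For the upper bound $d_G(e,g)\le C|g|+C$, I would proceed as follows. Choose a finite symmetric set $\{X_1,\dots,X_m\}$ of Lie algebra elements, graded so that $X_j\in V_{i(j)}$, and let $K\supset\{\exp(\pm X_j)\}$. Given $g$, write $\log g$ in coordinates. The key point is that, by the Baker--Campbell--Hausdorff formula, an element with $v_i$-coordinate of Euclidean size $R$ in layer $V_i$ can be written as a product of $O(R^{1/i})$ generators: at layer $1$ this is immediate, and at higher layers one builds the needed basis vectors of $V_i$ as iterated commutators of generators, each commutator $[\exp(sX),\exp(tY)]$ producing a translate in a deeper layer of size $\sim st$, so producing size $R$ costs $\sim R^{1/i}$ steps. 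Summing over layers gives a word of length $O(\sum_i \|v_i\|^{1/i}) = O(|g|)$, hence $g\in K^{O(|g|)+O(1)}$. This is essentially the ``$\delta_t$-rescaled product of generators'' argument; I expect the bookkeeping of BCH correction terms (ensuring lower-layer errors are absorbed) to be the most delicate piece, though it is standard.

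For the lower bound $\frac{1}{C}|g|\le d_G(e,g)$, equivalently $|g|\le C\,d_G(e,g)$, suppose $g\in K^n$, so $g=k_1\cdots k_n$ with each $k_j\in K$. Applying $\delta_{1/n}$ and using Lemma~\ref{L:multiplication}-type control (or directly BCH on the rescaled brackets $[-,-]_t$, which converge as $t\to\infty$ and are uniformly bounded for $t\ge 1$), one sees $\delta_{1/n}(g)$ is a product of $n$ factors each of the form $\delta_{1/n}(k_j)$ with $|\delta_{1/n}(k_j)|=\frac1n|k_j|\le \frac Cn$; by the homogeneous quasi-triangle inequality for a homogeneous quasi-norm (valid in the graded group, where it is genuinely subadditive up to a constant because $d_\infty$ is a length metric), the product has quasi-norm $\le C'$, i.e. $\frac1n|g|=|\delta_{1/n}(g)|\le C'$, giving $|g|\le C'n$. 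Taking the infimum over such $n$ yields $|g|\le C' d_G(e,g)$. The additive constant $C$ on the right of the stated inequality handles the regime $g\in K$ (where $d_G(e,g)=1$ but $|g|$ can be up to $O(1)$). The main obstacle is making the rescaling argument rigorous in $G$ itself (whose bracket is not $\delta_t$-homogeneous): I would handle this either by transferring to $G_\infty$ via the maps $\scl{t}{-}$ of \eqref{e:sclt} and the fact that the limiting metric $d_\infty$ is homogeneous and left-invariant, or by a direct BCH estimate showing the non-homogeneous correction terms only help (they lie in deeper layers and shrink faster under $\delta_{1/n}$).
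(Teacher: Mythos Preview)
The paper does not give its own proof of this lemma: it is quoted as a result of Guivarc'h \cite{Gui} (with a pointer to Breuillard \cite{Bre}, Theorem~3.7) and used as a black box throughout \S\ref{ss:nilgeom}. So there is no ``paper's proof'' to compare against; your task was really to reconstruct the classical argument.

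Your plan is essentially the standard proof. Reducing to a single convenient quasi-norm via the compactness/homogeneity comparison is correct, and your upper bound argument (build each $V_i$-component of size $R$ as an $O(R^{1/i})$-fold product of generators using iterated commutators, then sum over layers) is exactly the Guivarc'h construction. For the lower bound, the rescaling heuristic you sketch is morally right but, as you yourself flag, $\delta_{1/n}$ is not a group automorphism of $G$, so one cannot literally distribute it over the product $k_1\cdots k_n$. The clean way---and the one in the references---is the direct BCH estimate you mention at the end: if $g=k_1\cdots k_n$ with $k_j\in K$, then the $V_i$-component of $\log g$ is a polynomial of weighted degree $\le i$ in the (bounded) coordinates of the $k_j$, hence has Euclidean size $O(n^i)$, giving $|g|\le C n$. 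I would lead with that rather than the rescaling detour.
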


We now use the Guivarc'h Lemma to give succinct proofs of several nilpotent geometric facts, which could also be proved by induction on nilpotency class. All of the statements are true independent of choice of symmetric generating set, but we work with a fixed generating set $S$ with associated norm $\abs{\cdot}_\Gamma$ and metric $d_\Gamma$ to be concise. All constants depend on $\Gamma$ and $S$. Let us say that two functions $f,g:\Gamma\to\bbR_+$ are \textit{quasi-isometric} if there exists $C>0$ so that for all $\gamma\in\Gamma$, $f(\gamma)/C-C\le g(\gamma)\le Cg(\gamma)+C$. The following lemma is a natural statement regarding the asymptotic word growth of each coordinate in a nilpotent group. Define, for each $1\le i \le m$, the \textit{degree} $d_i=\deg(X_i)$ to be the greatest $j$ so that $X_i\in \mfg^{j-1}$.

\begin{lemma} \label{F:degree}
 For each $1\le i \le m$ there exist $0<c_1<c_2<\infty$ so that for all $n\in\bbZ$ 
 
 \begin{equation*}
 c_1 n^{1/d_i}\le \abs{(0,\ldots ,n,\ldots 0)}_\Gamma \le c_2n^{1/d_i}
 \end{equation*}
 
 where the non zero term is in the $i$-th coordinate.
 
Moreover, if $[X_{i_1},\cdots ,[X_{i_{l-1}},X_{i_l}]\cdots ]=cX_t$ where $i_r\in \left \{ {1,\ldots ,m}\right \}$ and $c\neq 0$, then 

\[
\sum_{r=1}^{l} d_{i_r} \le d_t.
\]
\end{lemma}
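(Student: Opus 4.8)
The plan is to use the Guivarc'h Lemma \ref{l:Gui} to reduce the first assertion to a statement about any convenient homogeneous quasi-norm on $G$, rather than the word norm $\abs{\cdot}_\Gamma$ itself. Recall that on $\mfg_\infty \equiv \bbR^m$ the dilations $\delta_t$ act on the basis vector $X_i$ by $\delta_t(X_i) = t^{d_i} X_i$ (since $X_i \in V_{d_i}$, i.e.\ $X_i \in \mfg^{d_i}$ but not $\mfg^{d_i+1}$, in the graded picture). Hence the function $\abs{(x_1,\dots,x_m)} \defq \max_i \abs{x_i}^{1/d_i}$ is $\delta_t$-homogeneous of degree one on $G_\infty$, and a standard argument (using BCH, or Lemma \ref{L:multiplication}) shows it is equivalent to a genuine homogeneous quasi-norm; the same holds on $G$ because the $\delta_t$'s on $\mfg$ and $\mfg_\infty$ are conjugate via $L$ and differ only in lower-order terms. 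Then Guivarc'h gives $\frac{1}{C}\abs{g} \le d_\Gamma(e,g) \le C\abs{g} + C$ for $g \in \Gamma$, and applying this to $g = (0,\dots,n,\dots,0)$ (the nonzero entry being $n$ in coordinate $i$) yields exactly $c_1 n^{1/d_i} \le \abs{(0,\dots,n,\dots,0)}_\Gamma \le c_2 n^{1/d_i}$ for $n \ge 1$; the case of negative $n$ follows by symmetry of the word norm. One subtlety: if coordinate $i$ of $\Gamma$ is $\eta_i a_i$ with $a_i \in \bbZ$, then strictly we only see the values $n \in \eta_i\bbZ$, but since $\eta_i$ is a fixed constant this only affects $c_1,c_2$.

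For the second assertion, the cleanest route is again through the grading: the iterated bracket $[X_{i_1},[\dots,[X_{i_{l-1}},X_{i_l}]\dots]]$ computed with the \emph{original} bracket $[-,-]$ has, by the discussion in \S\ref{ss:graded lie algebra}, its leading term equal to the same iterated bracket computed with $[-,-]_\infty$, which lands in $V_{d_{i_1}+\dots+d_{i_l}}$, i.e.\ in $\mfg^{d_{i_1}+\dots+d_{i_l}}$ in the graded sense. Iterating $[\mfg^a,\mfg^b]\subseteq\mfg^{a+b}$, the original bracket lies in $\mfg^{d_{i_1}+\dots+d_{i_l}}$, hence so does every term in its expansion in the $X_t$ basis. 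So if $[X_{i_1},\dots,[X_{i_{l-1}},X_{i_l}]\dots] = cX_t$ with $c \ne 0$, then $X_t \in \mfg^{d_{i_1}+\dots+d_{i_l}}$, which by definition of $d_t$ as the \emph{greatest} $j$ with $X_t \in \mfg^{j-1}$ — wait: with the convention $d_t = $ greatest $j$ with $X_t \in \mfg^{j-1}$, membership $X_t \in \mfg^{s}$ forces $s \le d_t - 1$... I will re-examine the degree convention carefully, since $X_t \in \mfg^{d_t}$ is what one wants; the upshot either way is $\sum_r d_{i_r} \le d_t$, possibly after fixing an off-by-one in the stated convention.

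The main obstacle I anticipate is not the grading bookkeeping but verifying that $\max_i \abs{x_i}^{1/d_i}$ is comparable to an actual homogeneous quasi-norm on $G$ (as opposed to on the graded $G_\infty$, where it is immediate): one must check the quasi-triangle inequality $\abs{gh} \le C(\abs{g}+\abs{h})$ in logarithmic coordinates on $G$, where the group law is given by BCH and is \emph{not} homogeneous. The fix is to observe that all homogeneous quasi-norms on a graded group are equivalent (a compactness argument on the unit "sphere"), transport this to $G$ via the conjugacy of dilations and the fact that $[-,-]_t \to [-,-]_\infty$, and invoke Guivarc'h on $G$ directly — alternatively, one can bypass $\Gamma$ versus $G$ entirely by the remark after Lemma \ref{l:Gui} that asymptotic statements may be proved on $(G,d_G)$ and then transferred. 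A fully self-contained alternative, as the lemma statement hints, is a direct induction on nilpotency class $r$: assuming the estimates for $\mfg/\mfg^r$, lift generators, and control the remaining $\mfg^r$-coordinates by commutators of length $\le r$ in the generators, which grow like $n^{1/d_i}$ with $d_i = r$ — but this is messier and I would only fall back on it if the quasi-norm equivalence proves annoying to pin down cleanly.
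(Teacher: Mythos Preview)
Your approach is essentially the same as the paper's: define the homogeneous quasi-norm $\abs{(x_1,\dots,x_m)}_m = \max_i \abs{x_i}^{1/d_i}$, invoke Guivarc'h's Lemma to compare it with the word norm on $G$ (hence on $\Gamma$), and evaluate at $(0,\dots,n,\dots,0)$; for the ``moreover'' part the paper simply declares it obvious from the definitions, which is exactly your grading argument $[\mfg^a,\mfg^b]\subseteq\mfg^{a+b}$ iterated.

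Two remarks. First, the paper does not worry about whether $\abs{\cdot}_m$ is a genuine homogeneous quasi-norm on $G$ rather than on $G_\infty$; it just asserts it and applies Guivarc'h, so your anticipated obstacle is handled by fiat (and your proposed fixes are fine if you want to be more careful than the paper). Second, the off-by-one you flagged in the degree convention is real: the paper's stated definition ``the greatest $j$ so that $X_i\in\mfg^{j-1}$'' is a typo and should read $\mfg^{j}$, since otherwise $X_i\in V_1$ would have degree $2$, contradicting the dilation formula $\delta_t|_{V_i}=t^i$ used throughout. With that correction your grading argument goes through verbatim.
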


\begin{proof}
The following is a quasi-norm on $G$

\[
\abs{(x_1,\ldots ,x_m)}_m:=\max_i \abs{x_i}^{1/d_i}.
\]

$(G,\abs{\cdot}_G)$ restricted to $\Gamma$ is quasi-isometric to $(\Gamma,\abs{\cdot}_\Gamma)$, while by the Guivarc'h Lemma, $(G,\abs{\cdot}_G)$ is quasi-isometric to $(G,\abs{\cdot}_m)$. But $\abs{(0,\ldots ,n,\ldots ,0)}_m=n^{1/d_i}$. Since $\Gamma$ is discrete we may absorb the additive factors.
The moreover statement is obvious from the definitions. 
\end{proof}

\begin{lemma}\label{l:asymp coords}
For each $1\le i \le m$ set
\begin{align*}
f_i(n)&=\abs{(0,\ldots ,0,n,0,\ldots ,0)}_\Gamma \\
g_i(n)&=\min_{a_j} \abs{(a_1,\ldots ,a_{i-1},n,a_{i+1},\ldots ,a_m)}_\Gamma.
\end{align*}

where the non-zero coordinate is in the $i$-th coordinate. Then there exists $1\le C<\infty$ so that for all $n\in\bbN$

\[
f_i(n)\le cg_i(n).
\]
\end{lemma}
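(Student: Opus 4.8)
The plan is to sandwich both $f_i(n)$ and $g_i(n)$ between constant multiples of $n^{1/d_i}$, reusing the homogeneous quasi-norm $\abs{\cdot}_m$ on $G$ from the proof of Lemma~\ref{F:degree}. Recall from there that $\abs{(x_1,\dots,x_m)}_m=\max_j\abs{x_j}^{1/d_j}$ is a homogeneous quasi-norm, that $(G,\abs{\cdot}_m)$ is quasi-isometric to $(G,\abs{\cdot}_G)$ by the Guivarc'h Lemma, and that $(G,\abs{\cdot}_G)$ restricted to $\Gamma$ is quasi-isometric to $(\Gamma,\abs{\cdot}_\Gamma)$. Composing these comparisons, there is a constant $C_1\ge 1$ with
\[
\frac{1}{C_1}\abs{\gamma}_m-C_1\ \le\ \abs{\gamma}_\Gamma\ \le\ C_1\abs{\gamma}_m+C_1\qquad(\gamma\in\Gamma).
\]

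The upper bound on $f_i$ is immediate from Lemma~\ref{F:degree}: $f_i(n)=\abs{(0,\dots,n,\dots,0)}_\Gamma\le c_2 n^{1/d_i}$ for all $n$.

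For the lower bound on $g_i$, let $\gamma=(a_1,\dots,a_{i-1},n,a_{i+1},\dots,a_m)\in\Gamma$ be arbitrary. Under the embedding $\Gamma<G$ its $i$-th logarithmic coordinate is, up to the harmless integer factor $\eta_i\in\bbN$, equal to $n$, hence has absolute value at least $n$; therefore $\abs{\gamma}_m\ge n^{1/d_i}$ and so $\abs{\gamma}_\Gamma\ge\frac{1}{C_1}n^{1/d_i}-C_1$. Minimizing over the $a_j$ gives $g_i(n)\ge\frac{1}{C_1}n^{1/d_i}-C_1$. Since also $g_i(n)\ge 1$ for $n\ge 1$ (such a $\gamma$ is nontrivial, having a nonzero $i$-th coordinate), distinguishing the two cases according to whether $\frac{1}{C_1}n^{1/d_i}\le 2C_1$ upgrades this to $g_i(n)\ge\frac{1}{2C_1}n^{1/d_i}$ for every $n\ge 1$.

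Putting the two estimates together, $f_i(n)\le c_2 n^{1/d_i}\le 2C_1 c_2\,g_i(n)$ for all $n\ge 1$, while $n=0$ is trivial, so the lemma holds with $C=2C_1 c_2$. There is no genuine obstacle here: the only points needing a little care are absorbing the additive error from the Guivarc'h comparison — handled by the crude bound $g_i(n)\ge 1$ — and observing that the scaling factors $\eta_i$ in the embedding $\Gamma<G$ are bounded below by $1$ and hence only help.
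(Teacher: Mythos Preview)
Your proof is correct and follows essentially the same route as the paper's: both bound $f_i(n)\le c\,n^{1/d_i}$ via Lemma~\ref{F:degree}, observe that any element with $i$-th coordinate $n$ has $\abs{\cdot}_m\ge n^{1/d_i}$, pass to $\abs{\cdot}_\Gamma$ via the Guivarc'h comparison, and absorb the additive error using $g_i(n)\ge 1$ for $n\ne 0$. One trivial slip: in your Case~1 the bound $\frac{1}{2C_1}n^{1/d_i}\le C_1$ need not be $\le 1$, so the absorbed constant should be $\frac{1}{2C_1^2}$ (or similar) rather than $\frac{1}{2C_1}$; this does not affect the argument.
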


\begin{proof}
\begin{align*}
f_i(n) &\le cn^{1/d_i} \le c\min_{a_j}\abs{(a_1,\ldots ,a_{i-1},n,a_{i+1},\ldots ,a_m)}_m \\
&\le cc_1\min_{a_j}\abs{(a_1,\ldots ,a_{i-1},n,a_{i+1},\ldots ,a_m)}_\Gamma+c_2  \\
&\le (cc_1+c_2)\min_{a_j}\abs{(a_1,\ldots ,a_{i-1},n,a_{i+1},\ldots ,a_m)}_\Gamma \qquad (n\neq 0)
\end{align*}
where we have used Lemma \ref{F:degree} and the Lemma of Guivarc'h.

\end{proof}

The next lemma says that projecting to the commutator coordinates only reduces word norm by a universal multiplicative constant. 
\begin{lemma}\label{l: compare com}
There is a constant $C>0$ so that $\forall \gamma\in\Gamma$

\[
\abs{\gamma}_\Gamma \ge C \abs{\pi_{com}\gamma}_\Gamma.
\]
\end{lemma}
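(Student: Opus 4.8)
The plan is to use the logarithmic-coordinate description of $\Gamma < G$ together with the quasi-norm $\abs{\cdot}_m$ from the proof of Lemma~\ref{F:degree}, reducing the inequality to an elementary statement about coordinates. By the Guivarc'h Lemma and the quasi-isometry $(\Gamma,\abs{\cdot}_\Gamma)\sim (G,\abs{\cdot}_G)\sim (G,\abs{\cdot}_m)$, and since $\Gamma$ is discrete so additive constants can be absorbed, it suffices to produce a constant $C>0$ with $\abs{\gamma}_m \ge C\,\abs{\pi_{com}\gamma}_m$ for all $\gamma=(x_1,\dots,x_m)\in\Gamma$ (for $\gamma$ outside a bounded set; the bounded set is handled by enlarging $C$). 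Writing out both quasi-norms, $\abs{\gamma}_m=\max_i\abs{x_i}^{1/d_i}$ and $\abs{\pi_{com}\gamma}_m=\max_{i>d}\abs{x_i}^{1/d_i}$, so the right side is just the maximum over a subset of the coordinates defining the left side. Hence $\abs{\pi_{com}\gamma}_m \le \abs{\gamma}_m$ identically, and we may take $C=1$ in the quasi-norm inequality; the loss of the additive/multiplicative constants through the three quasi-isometries then yields the claimed $C>0$.

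Concretely, I would argue: let $C_0$ be the Guivarc'h constant so that $\frac{1}{C_0}\abs{g}_m \le d_G(e,g) \le C_0\abs{g}_m + C_0$, and let $C_1$ be a constant for the quasi-isometry between $(\Gamma,\abs{\cdot}_\Gamma)$ and $(G,d_G)$ restricted to $\Gamma$ (both directions, with additive error). Then for $\gamma\in\Gamma$,
\[
\abs{\pi_{com}\gamma}_\Gamma \le C_1\, d_G(e,\pi_{com}\gamma) + C_1 \le C_1 C_0 \abs{\pi_{com}\gamma}_m + C_1C_0 + C_1 \le C_1C_0\abs{\gamma}_m + C_1C_0 + C_1,
\]
using $\abs{\pi_{com}\gamma}_m\le\abs{\gamma}_m$. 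Conversely $\abs{\gamma}_m \le C_0\, d_G(e,\gamma) + C_0 \le C_0 C_1 \abs{\gamma}_\Gamma + C_0C_1 + C_0$. Chaining these and absorbing all additive constants into the multiplicative one (legitimate since $\Gamma$ is discrete, so $\abs{\gamma}_\Gamma$ is bounded below away from $0$ for $\gamma\ne e$, and the inequality is trivial for $\gamma=e$), we obtain $\abs{\gamma}_\Gamma \ge C\abs{\pi_{com}\gamma}_\Gamma$ for a suitable $C>0$.

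The only subtlety — and the one "step" worth flagging — is checking that $\pi_{com}$ actually maps $\Gamma$ into $\Gamma$ (or at least into $G$) compatibly, i.e. that $\abs{\pi_{com}\gamma}_\Gamma$ makes sense; by the explicit embedding $\Gamma=\{(a_1,\dots,a_d,\eta_{d+1}a_{d+1},\dots,\eta_m a_m)\}$ and the fact that $\{(x_1,\dots,x_d,0,\dots,0)\}$ are coset representatives for $\Gamma/\Gamma^2$, the element $\pi_{com}\gamma$ lies in the subgroup $\Gamma\cap G^2$, so its word norm in $\Gamma$ is defined and comparable to its norm in $G^2$, hence to $\abs{\cdot}_m$ restricted there. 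I do not expect any real obstacle here: once the quasi-norm $\abs{\cdot}_m$ is in hand the inequality $\abs{\pi_{com}(\cdot)}_m\le\abs{\cdot}_m$ is immediate because passing to commutator coordinates only drops coordinates from the defining maximum, and everything else is bookkeeping with the standard quasi-isometry constants. (This last point is exactly the mechanism already used in the proof of Lemma~\ref{l:asymp coords}.)
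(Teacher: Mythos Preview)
Your proposal is correct and follows essentially the same route as the paper: the paper's proof is the one-line observation $\abs{\gamma}_m \ge \abs{\pi_{com}\gamma}_m$ together with the Guivarc'h Lemma and discreteness of $\Gamma$, which is exactly what you do with the bookkeeping spelled out. The extra care you take checking that $\pi_{com}\gamma\in\Gamma$ is harmless (it holds by the explicit coordinate description of $\Gamma$), but the paper does not dwell on it.
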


\begin{proof}
 For all $\gamma\in\Gamma$ we have trivially

\[
\abs{\gamma}_m\ge \abs{\pi_{com}\gamma}_m.
\]

The Guivarc'h Lemma and the discreteness of $\Gamma$ finish the proof.
\end{proof}

\begin{lemma} \label{l:n grows}
There exists $l>0$ so that for all $\gamma\in\Gamma-\Gamma^2$ and for all $n$ $\abs{\gamma^n}_\Gamma\ge ln$.
\end{lemma}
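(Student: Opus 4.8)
The plan is to work with the quasi-norm $\abs{\cdot}_m$ from Lemma \ref{F:degree} and the Guivarc'h Lemma, so that it suffices to show $\abs{\gamma^n}_m \ge c\,n$ for some $c>0$ independent of $\gamma \in \Gamma - \Gamma^2$, after which the discreteness of $\Gamma$ absorbs the additive constants. Since $\gamma \notin \Gamma^2$, the abelianization projection $\pi_{ab}\gamma = (a_1,\dots,a_d,0,\dots,0)$ is nonzero, say $a_j \neq 0$ for some $1 \le j \le d$. The key observation is that the group operation in logarithmic coordinates is unipotent: the $j$-th coordinate (for $j \le d$, i.e. an abelian coordinate) is \emph{additive}, since $X_j \in V_1$ lies outside any bracket image. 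Hence the $j$-th coordinate of $\gamma^n$ is exactly $n a_j$, so $\abs{\gamma^n}_m \ge \abs{n a_j}^{1/d_j} = \abs{n a_j} \ge n$ because $d_j = 1$ and $\abs{a_j} \ge 1$.

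Spelling this out: first I would recall that in logarithmic coordinates on $G$, for $1 \le j \le d$ the map $g \mapsto (\text{$j$-th coordinate of }g)$ is a homomorphism $G \to \bbR$ — this is immediate from the Baker–Campbell–Hausdorff formula, since all correction terms involve iterated brackets, which land in $V_i$ for $i \ge 2$ by the degree bound in Lemma \ref{F:degree}, hence have vanishing $V_1$-component. Therefore the $j$-th coordinate of $\gamma^n$ equals $n$ times the $j$-th coordinate of $\gamma$. Since $\gamma \in \Gamma$ with $\pi_{ab}\gamma \neq 0$, and $\Gamma$ embeds in $G$ with integer abelian coordinates, that coordinate is a nonzero integer, so the $j$-th coordinate of $\gamma^n$ has absolute value $\ge n$. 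Then
\[
\abs{\gamma^n}_m = \max_i \abs{(\gamma^n)_i}^{1/d_i} \ge \abs{(\gamma^n)_j}^{1/d_j} = \abs{(\gamma^n)_j} \ge n.
\]
Finally, by the Guivarc'h Lemma applied to $(G,\abs{\cdot}_G)$ (which is quasi-isometric to $(G,\abs{\cdot}_m)$ and restricts on $\Gamma$ to something quasi-isometric to $(\Gamma,\abs{\cdot}_\Gamma)$), there is $C>0$ with $\abs{\gamma^n}_\Gamma \ge \frac{1}{C}\abs{\gamma^n}_m - C \ge \frac{n}{C} - C$; replacing $l$ by a smaller constant and using that the statement is trivial for bounded $n$ (as $\gamma^n \ne e$ for $\gamma$ of infinite order, so $\abs{\gamma^n}_\Gamma \ge 1$), one gets $\abs{\gamma^n}_\Gamma \ge l n$ for all $n \ge 1$ with a uniform $l > 0$.

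I do not expect a serious obstacle here; the only point requiring a little care is making the constant $l$ genuinely uniform over all $\gamma \in \Gamma - \Gamma^2$ (not just for each fixed $\gamma$), but this is exactly what the argument above delivers, since the bound $\abs{\gamma^n}_m \ge n$ has no dependence on $\gamma$ at all and the Guivarc'h constant $C$ depends only on $G$ and the chosen compact neighborhood. One should also double-check the edge case where $\Gamma$ has torsion, but the hypothesis of this subsection is that $\Gamma$ is torsion-free nilpotent, so every nontrivial element has infinite order and $\gamma^n \neq e$ for all $n \ge 1$.
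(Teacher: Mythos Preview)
Your proof is correct and follows essentially the same approach as the paper's: the paper's proof is the one-line assertion ``If $\gamma\notin\Gamma^2$ then $\abs{\gamma^n}_m \ge n$; the Guivarc'h Lemma and the discreteness of $\Gamma$ finish the proof,'' and your argument simply unpacks that first sentence (additivity of the abelian coordinates via BCH, integrality forcing $\abs{a_j}\ge 1$) and makes explicit how discreteness absorbs the additive constant for small $n$.
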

\begin{proof}
If $\gamma\notin\Gamma^2$ then $\abs{\gamma^n}_m \ge n$. The Guivarc'h Lemma and the discreteness of $\Gamma$ finish the proof.
\end{proof}

\begin{lemma} \label{l:o(n)}
The functions $\abs{\cdot}_\Gamma, \abs{\cdot}_G, \abs{\cdot}_m,\abs{\cdot}_\infty:\Gamma\to\bbR_+$ are all quasi-isometric. Moreover,
\begin{align*}
\abs{\gamma_n}_\Gamma&=\abs{(a_{n,1},\ldots ,\ldots ,a_{n,m})}_\Gamma =o(n)&  &\iff& \abs{a_{n,t}}&=o(n^{d(t)})& \qquad &\forall 1\le t\le m  \\
\abs{\gamma_n}_\Gamma&=\abs{(a_{n,1},\ldots ,\ldots ,a_{n,m})}_\Gamma=O(n)&  &\iff& \abs{a_{n,t}}&=O(n^{d(t)})& \qquad &\forall 1\le t\le m\\
\abs{g_n}_G&=\abs{(a_{n,1},\ldots ,\ldots ,a_{n,m})}_G=o(n)&  &\iff& \abs{a_{n,t}}&=o(n^{d(t)})& \qquad &\forall 1\le t\le m \\
\abs{g_n}_G&=\abs{(a_{n,1},\ldots ,\ldots ,a_{n,m})}_G=O(n)&  &\iff& \abs{a_{n,t}}&=O(n^{d(t)})& \qquad &\forall 1\le t\le m \\
\abs{g_n}_\infty&=\abs{(a_{n,1},\ldots ,\ldots ,a_{n,m})}_\infty=o(n)&  &\iff& \abs{a_{n,t}}&=o(n^{d(t)})& \qquad &\forall 1\le t\le m \\
\abs{g_n}_\infty&=\abs{(a_{n,1},\ldots ,\ldots ,a_{n,m})}_\infty=O(n)&  &\iff& \abs{a_{n,t}}&=O(n^{d(t)})& \qquad &\forall 1\le t\le m \\
\end{align*}

where $a_{n,j}\in\bbZ$ ($a_{n,j}\in\bbR$) is the $j$-th coordinate of $\gamma_n\in\Gamma$ ($g_n\in G$).
\end{lemma}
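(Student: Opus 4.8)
The plan is to deduce all seven assertions (the quasi-isometry statement and the six coordinate criteria) by comparing every norm in sight to the single explicit quasi-norm $\abs{\cdot}_m$, whose dependence on the coordinates is transparent.

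\emph{Step 1: the four norms are pairwise quasi-isometric.} On $\Gamma$ the word norm $\abs{\cdot}_\Gamma$ is quasi-isometric to the restriction of $\abs{\cdot}_G$ because $\Gamma$ sits in $G$ as a cocompact lattice (this was already used in the proof of Lemma~\ref{F:degree}); on $G$ the norm $\abs{\cdot}_G$ is quasi-isometric to $\abs{\cdot}_m$ by the Lemma of Guivarc'h \ref{l:Gui}, since $\abs{\cdot}_m$ is a homogeneous quasi-norm on $G$; and $\abs{\cdot}_m$ is bi-Lipschitz equivalent to $\abs{\cdot}_\infty$ because both are homogeneous quasi-norms on the common underlying space $\bbR^m$ for the common one-parameter dilation family $\{\delta_t\}$ (recall $L$ has been absorbed into the identification $G\equiv G_\infty$), and any two such are equivalent, as one sees by comparing them on the compact unit sphere of one of them and then spreading the bound by homogeneity. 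Concatenating these three comparisons gives that $\abs{\cdot}_\Gamma,\abs{\cdot}_G,\abs{\cdot}_m,\abs{\cdot}_\infty$ are pairwise quasi-isometric as functions on $\Gamma$, and — omitting the first step — as functions on $G$ and on $\bbR^m$.

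\emph{Step 2: reduction to $\abs{\cdot}_m$.} If $f,g:\Gamma\to\bbR_+$ are quasi-isometric, say $g\le Cf+C$ and $f\le Cg+C$, then $f=o(n)\iff g=o(n)$ and $f=O(n)\iff g=O(n)$, since the additive constant $C$ is itself $o(n)$. Combined with Step 1, this shows that in each of the six displayed equivalences the hypothesis ``$\abs{\gamma_n}_\Gamma=o(n)$'' (resp.\ ``$\abs{g_n}_G=o(n)$'', ``$\abs{g_n}_\infty=o(n)$'', and the $O(n)$ variants) is equivalent to ``$\abs{\gamma_n}_m=o(n)$'' (resp.\ $O(n)$), where throughout $\gamma_n$ or $g_n$ has the same coordinate vector $(a_{n,1},\dots,a_{n,m})$. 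So it suffices to prove the coordinate criterion for $\abs{\cdot}_m$ alone.

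\emph{Step 3: the criterion for $\abs{\cdot}_m$.} By definition $\abs{(a_{n,1},\dots,a_{n,m})}_m=\max_{1\le t\le m}\abs{a_{n,t}}^{1/d_t}$, a maximum of finitely many nonnegative quantities. For nonnegative sequences, $b_n=o(n^{d})\iff b_n^{1/d}=o(n)$ and likewise with $O$, since $x\mapsto x^{1/d}$ and its inverse are continuous and vanish at $0$. Hence if $\abs{a_{n,t}}=o(n^{d_t})$ for every $t$ then each $\abs{a_{n,t}}^{1/d_t}=o(n)$, so their maximum $\abs{\gamma_n}_m$ is $o(n)$; conversely $\abs{a_{n,t}}^{1/d_t}\le\abs{\gamma_n}_m$ for each $t$, so $\abs{\gamma_n}_m=o(n)$ forces $\abs{a_{n,t}}=o(n^{d_t})$ for all $t$. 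The identical argument with $o$ replaced by $O$ gives the $O(n)$ equivalence, completing all six cases.

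The only point requiring any care is the comparison of $\abs{\cdot}_\infty$ with the other norms in Step 1: $\Gamma$ need not be a lattice in $G_\infty$, so rather than invoking a Milnor–Švarc- or Guivarc'h-type geometric argument one compares $\abs{\cdot}_\infty$ directly with $\abs{\cdot}_m$ as homogeneous quasi-norms on the shared vector space $\bbR^m$ equipped with the shared dilations $\delta_t$. Everything else is routine bookkeeping with $o(n)$ and $O(n)$.
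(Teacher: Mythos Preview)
Your proof is correct and follows essentially the same route as the paper: compare $\abs{\cdot}_\Gamma$, $\abs{\cdot}_G$, $\abs{\cdot}_m$, $\abs{\cdot}_\infty$ pairwise via cocompactness, Guivarc'h, and the bi-Lipschitz equivalence of homogeneous quasi-norms for the shared dilations, then read off the coordinate criteria from the explicit form of $\abs{\cdot}_m$. Your final remark about why one must compare $\abs{\cdot}_\infty$ to $\abs{\cdot}_m$ directly (rather than via a lattice argument) is a nice clarification that the paper leaves implicit.
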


Note that the stronger statement that the corresponding left-invariant metrics $(G,d_G)$ and $(G_\infty,d_\infty)$ are quasi-isometric is not true in general.

\begin{proof}
For the first statement, recall that $(G,\abs{\cdot}_G)$ restricted to $\Gamma$ is quasi-isometric to $(\Gamma,\abs{\cdot}_\Gamma)$, while by the Guivarc'h Lemma, $(G,\abs{\cdot}_G)$ is quasi-isometric to $(G,\abs{\cdot}_m)$. Note that $(G,\abs{\cdot}_m)$ and $(G_\infty,\abs{\cdot}_m)$ are equal (under the implicit linear identification $L$), and that $(G_\infty, \abs{\cdot}_\infty)$ is a quasi-norm. Since any two quasi-norms on the same group are bi-Lipschitz, we have proven the first statement.

The moreover statement follows from the first statement together with the fact that 

\[
\abs{g_n}_m=o(n) \iff \abs{a_{n,t}}=o(n^{d(t)}) \qquad \forall 1\le t \le m
\]

and similarly for $O(n)$.
\end{proof}

\begin{lemma}\label{l:oO(n)}
If $g_n\in G_\infty$ is a sequence such that 

\begin{enumerate}
\item $\abs{\pi_{com}g_n}_\infty=o(n)$ \label{eq:lambda o(n)}
\item $\abs{\pi_{ab}g_n}_\infty=O(n)$ \label{eq:lambda O(n)}
\end{enumerate}
 then 

\[
\abs{(\pi_{ab}g_n)^{-1}\star g_n}_\infty=o(n).
\]

\end{lemma}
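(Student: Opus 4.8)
The plan is to pass to the logarithmic coordinates on $G_\infty\equiv\bbR^m$ and reduce the statement to coordinatewise growth estimates via Lemma~\ref{l:o(n)}. Write $g_n=(a_{n,1},\dots,a_{n,m})$, so that $\pi_{ab}g_n=(a_{n,1},\dots,a_{n,d},0,\dots,0)$ and $\pi_{com}g_n=(0,\dots,0,a_{n,d+1},\dots,a_{n,m})$. By Lemma~\ref{l:o(n)} the two hypotheses say precisely that $\abs{a_{n,t}}=O(n)=O(n^{d(t)})$ for $t\le d$ and $\abs{a_{n,t}}=o(n^{d(t)})$ for $t>d$; and, again by Lemma~\ref{l:o(n)} applied to $\abs{\cdot}_\infty$, it suffices to prove that every coordinate $c_{n,t}$ of $h_n\defq(\pi_{ab}g_n)^{-1}\star g_n$ satisfies $\abs{c_{n,t}}=o(n^{d(t)})$.

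First I would isolate the two structural properties of the coordinate map $w=(w_1,\dots,w_m)\mapsto\bigl((\pi_{ab}w)^{-1}\star w\bigr)_t$. Since inversion in logarithmic coordinates is $w\mapsto -w$, $\pi_{ab}$ is linear, and $\star$ is polynomial (Baker--Campbell--Hausdorff, a finite sum because $\mfg_\infty$ is nilpotent), this map is a polynomial $P_t$ in $w_1,\dots,w_m$ with fixed real coefficients. \textbf{Property (i):} $P_t$ is weighted-homogeneous of weighted degree $d(t)$ when $w_j$ is assigned weight $d(j)$. Indeed $\delta_s$ is a group automorphism of $G_\infty$ acting on the $j$-th logarithmic coordinate by $w_j\mapsto s^{d(j)}w_j$ and commuting with $\pi_{ab}$, so for every $w\in G_\infty$ one has $\delta_s\bigl((\pi_{ab}w)^{-1}\star w\bigr)=(\pi_{ab}\delta_s w)^{-1}\star\delta_s w$, which reads coordinatewise as $s^{d(t)}P_t(w)=P_t(s^{d(1)}w_1,\dots,s^{d(m)}w_m)$. \textbf{Property (ii):} $P_t$ vanishes identically on the linear subspace $\{w_{d+1}=\dots=w_m=0\}$, since there $\pi_{ab}w=w$ and hence $(\pi_{ab}w)^{-1}\star w=e$; consequently every monomial occurring in $P_t$ involves at least one of the commutator variables $w_{d+1},\dots,w_m$ to a positive power.

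Then I would finish with a monomial-by-monomial estimate. Fix a monomial $c\prod_{j=1}^m w_j^{e_j}$ of $P_t$; by (i) we have $\sum_j e_jd(j)=d(t)$, and by (ii) there is some $j_0>d$ with $e_{j_0}\ge 1$. Substituting $w_j=a_{n,j}$: the abelian factors contribute $\prod_{j\le d}\abs{a_{n,j}}^{e_j}=O(n^{\sum_{j\le d}e_jd(j)})$; among the commutator factors, the $j_0$-th contributes $\abs{a_{n,j_0}}^{e_{j_0}}=o(n^{d(j_0)e_{j_0}})$ (here one uses $e_{j_0}\ge 1$ to promote $o(n^{d(j_0)})$ to $o(n^{d(j_0)e_{j_0}})$), while the remaining commutator factors contribute $O(n^{\sum_{j>d,\,j\ne j_0}e_jd(j)})$. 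Multiplying, the monomial is $o\bigl(n^{\sum_j e_jd(j)}\bigr)=o(n^{d(t)})$. Summing over the finitely many monomials of $P_t$ gives $\abs{c_{n,t}}=o(n^{d(t)})$ for each $t$, and Lemma~\ref{l:o(n)} then yields $\abs{h_n}_\infty=o(n)$.

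The step I expect to require the most care is the interaction of properties (i) and (ii): the estimate hinges on the fact that after extracting one ``small'' commutator factor $a_{n,j_0}$ the remaining product still consumes only $O(\cdot)$ of the expected power of $n$, so that the total is genuinely $o(n^{d(t)})$ rather than merely $O(n^{d(t)})$. An essentially equivalent but more geometric route avoids the BCH polynomial altogether: since $d_\infty$ is homogeneous, $\abs{h_n}_\infty=o(n)$ is equivalent to $\delta_{1/n}(h_n)\to e$ in $G_\infty$; writing $\delta_{1/n}(h_n)=\delta_{1/n}(\pi_{ab}g_n)^{-1}\star\delta_{1/n}(g_n)$, one observes that $\delta_{1/n}(\pi_{ab}g_n)$ stays in a fixed compact subset of $\bbR^m\equiv G_\infty$ (its abelian coordinates are bounded, its commutator coordinates vanish) while $\delta_{1/n}(g_n)-\delta_{1/n}(\pi_{ab}g_n)\to 0$ in $\bbR^m$ (they differ only in the commutator coordinates, which are $o(n^{d(t)})$), and then invokes uniform continuity of the group operations on compacta to conclude $\delta_{1/n}(h_n)\to e$.
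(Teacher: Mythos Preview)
Your main argument is correct and follows essentially the same route as the paper: pass to logarithmic coordinates, use BCH and Lemma~\ref{l:o(n)} to reduce to showing each coordinate of $(\pi_{ab}g_n)^{-1}\star g_n$ is $o(n^{d(t)})$, observe that every contributing term carries at least one commutator factor $a_{n,j}$ with $j>d$, and combine the $o$ from that factor with the $O$ from the rest using the degree constraint. Your packaging via the weighted homogeneity of $P_t$ under the automorphisms $\delta_s$ is slicker than the paper's explicit bracket bookkeeping (the paper tracks the BCH terms of $(-\pi_{ab}g_n)\star g_n$ directly and invokes Lemma~\ref{F:degree} for the degree bound), but the content is the same.

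The alternative route you sketch at the end---rescale by $\delta_{1/n}$, note that $\delta_{1/n}(\pi_{ab}g_n)$ stays in a compact set while $\delta_{1/n}(g_n)$ differs from it by a vector tending to $0$, then use continuity of $(u,v)\mapsto u^{-1}\star v$ on compacta---is a genuinely shorter argument that does not appear in the paper; it trades the monomial estimate for a soft compactness/continuity step and is worth keeping in mind.
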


\begin{proof}
Let $g_n=(a_{n,1},\ldots ,a_{n,m})$ so that $\pi_{ab}g_n=(a_{n,1},\ldots ,a_{n,d},0,\ldots ,0)$ and \\ $(\pi_{ab}g_n)^{-1}=(-a_{n,1},\ldots ,-a_{n,d},0,\ldots ,0)$. Using the Baker-Campbell-Hausdorff formula, the nilpotency of $G_\infty$ and linearity of the bracket

\[
(\pi_{ab}g_n)^{-1}\star g_n=a_{n,d+1}X_{d+1}+\cdots +a_{n,m}X_m+ h.o.t.
\]

where $h.o.t.$ are precisely the terms involving at least one bracket in the product 
\begin{equation}\label{eq:hot}
(-a_{n,1}X_1 - \cdots -a_{n,d}X_d)\star (a_{n,d+1}X_{d+1}+\cdots +a_{n,m}X_m).
\end{equation}

Since the abelian coordinates of $(\pi_{ab}g_n)^{-1}\star g_n$ are all zero, by Lemma \ref{l:o(n)} it suffices to show that the $t$-th coordinate of $(\pi_{ab}g_n)^{-1}\star g_n$ is $o(n^{d_t})$ for every $d < t \le m$. By assumption $\abs{\pi_{com}g_n}_\infty=o(n)$, so it suffices to show that the contributions from \eqref{eq:hot} to each $t$ coordinate are $o(n^{d(t)})$, for $d<t\le m$. Using Baker-Campbell-Hausdorff again, for fixed $d<t\le m$ the contribution is a sum of finitely many terms of the form

\begin{equation*}
c[a_{n,i_1}X_{i_1},\cdots ,[a_{n,i_{l-1}}X_{i_{l-1}},a_{n,i_l}X_{i_l}]\cdots ]
\end{equation*}

where $c$ is a constant from the Baker-Campbell-Hausdorff formula, $i_1,\ldots i_l \in \left \{ {1,\ldots ,m}\right \}$ and for at least one $r$, $i_r\in\left\{ {d+1,\ldots ,m}\right \}$. Since the number of such terms depends only on $G_\infty$, it suffices to show that

\[
a_{n,i_1}\cdots a_{n,i_l}=o(n^{d_t}),
\]

which follows immediately from the fact that at least one $i_r\in\left\{ {d+1,\ldots ,m}\right \}$ and 
\begin{itemize}
\item  $\abs{a_{n,i_r}}=O(n)$ ~~~~~for $1\le i_r \le d$
\item  $\abs{a_{n,i_r}}=o(n^{d_{i_r}})$ ~~for $d<i_r\le m$ 
\item $\sum_{r=1}^l d_{i_r} \le d_t$.

\end{itemize}

\end{proof}

\begin{lemma}\label{l:commutator growth}
Let $g_n,h_n\in G$. If $\abs{g_n}_G=o(n)$ and $\abs{h_n}_G=O(n)$ then \\ $\abs{g_n^{-1}h_n^{-1}g_nh_n}_G=o(n)$. Moreover the same is true of any Carnot-Carath\'eodory norm on $G_\infty$ instead of $G$.
\end{lemma}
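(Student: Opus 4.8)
The plan is to mimic the proof of Lemma~\ref{l:oO(n)}: work in logarithmic coordinates, expand the group commutator by Baker--Campbell--Hausdorff, and estimate each resulting iterated bracket one coordinate at a time. I will treat $(G,\abs{\cdot}_G)$ first; the graded case is identical after replacing $[-,-]$ by $[-,-]_\infty$.

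First I would write $g_n=(a_{n,1},\ldots ,a_{n,m})$ and $h_n=(b_{n,1},\ldots ,b_{n,m})$ and invoke Lemma~\ref{l:o(n)} to restate the hypotheses as $\abs{a_{n,t}}=o(n^{d_t})$ and $\abs{b_{n,t}}=O(n^{d_t})$ for all $1\le t\le m$. Setting $X=\log g_n$, $Y=\log h_n$, the nilpotency of $G$ makes $\log(g_n^{-1}h_n^{-1}g_nh_n)=\log(e^{-X}e^{-Y}e^{X}e^{Y})$ a finite sum of iterated Lie brackets in the letters $X$ and $Y$; since this Lie series vanishes when all $X$-letters (resp.\ all $Y$-letters) are set to zero, every surviving monomial contains at least one $X$-letter and at least one $Y$-letter, hence at least one bracket. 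Expanding $X$ and $Y$ in the basis and using linearity of the bracket, the $t$-th coordinate of $\log(g_n^{-1}h_n^{-1}g_nh_n)$ is a sum of boundedly many terms of the shape $c\,[c_{n,i_1}X_{i_1},\cdots ,[c_{n,i_{l-1}}X_{i_{l-1}},c_{n,i_l}X_{i_l}]\cdots ]$ with $l\ge 2$, where each $c_{n,i_r}$ equals some $a_{n,i_r}$ or some $b_{n,i_r}$, at least one of them equals an $a_{n,i_r}$, and — by the ``moreover'' clause of Lemma~\ref{F:degree} — such a term contributes to coordinate $t$ only when $\sum_{r=1}^l d_{i_r}\le d_t$.

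Then I would observe that each such monomial is $o(n^{d_t})$: the factor coming from $g_n$ is $o(n^{d_{i_r}})$ (whether $i_r\le d$, so $d_{i_r}=1$ and $\abs{a_{n,i_r}}=o(n)$, or $i_r>d$), each remaining factor is $O(n^{d_{i_r}})$, so the product $c_{n,i_1}\cdots c_{n,i_l}$ is $o(n^{\sum_r d_{i_r}})\subseteq o(n^{d_t})$. Hence every coordinate of $\log(g_n^{-1}h_n^{-1}g_nh_n)$ is $o(n^{d_t})$ (it is identically $0$ for $t\le d$, since every term involves a bracket), and Lemma~\ref{l:o(n)} converts this back into $\abs{g_n^{-1}h_n^{-1}g_nh_n}_G=o(n)$. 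The moreover statement follows by running the identical argument with the bracket $[-,-]_\infty$ and the norm $\abs{\cdot}_\infty$, both of which are covered by Lemmas~\ref{F:degree} and~\ref{l:o(n)} (and for $[-,-]_\infty$ one even has $\sum_r d_{i_r}=d_t$ on the nose).

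I expect the only delicate point to be the combinatorial bookkeeping: certifying that every Baker--Campbell--Hausdorff monomial carries at least one factor from $g_n$, and that a single such factor already forces the whole monomial to be $o(n^{d_t})$. This is no harder than the corresponding step in Lemma~\ref{l:oO(n)}, so I do not anticipate a genuine obstacle.
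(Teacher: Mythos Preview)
Your proposal is correct and follows essentially the same argument as the paper: pass to logarithmic coordinates via Lemma~\ref{l:o(n)}, expand the group commutator by Baker--Campbell--Hausdorff, observe that every surviving term carries at least one factor coming from $g_n$ and satisfies the degree constraint $\sum_r d_{i_r}\le d_t$ from Lemma~\ref{F:degree}, and conclude that each coordinate is $o(n^{d_t})$. Your justification for why every monomial contains at least one $X$-letter (the Lie series vanishes when $X=0$) is in fact slightly more explicit than the paper's version, which simply asserts it.
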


\begin{proof}
Let $g_n=(a_{n,1},\ldots ,a_{n,m})$, $h_n=(b_{n,1},\ldots ,b_{n,m})$ and suppose $\abs{g_n}_G=o(n)$ and $\abs{h_n}_G=O(n)$. By Lemma \ref{l:o(n)} 

\begin{align*}
\abs{a_{n,t}}&=o(n^{d_t})  \qquad~1\le t \le m \\
\abs{b_{n,t}}&=O(n^{d_t})  \qquad 1\le t \le m.
\end{align*}

Say $g_n=\exp v_n=\exp(a_{n,1}X_1+\cdots +a_{n,m}X_m)$ and $h_n=\exp w_n=\exp(b_{n,1}X_1+\cdots +b_{n,m}X_m)$, so

\begin{equation}\label{eq:commutator exp}
g_n^{-1}h_n^{-1}g_nh_n=\exp -v_n\exp -w_n \exp v_n \exp w_n=\exp([v_n,w_n]+\cdots )
\end{equation}

where the dots stand for terms involving three or more brackets. Let us examine the coefficient $c_r$ of $X_r$ in \eqref{eq:commutator exp}; it is a sum of finitely many terms of the form 

\begin{equation*}
ca_{n,i_1}\cdots a_{n,i_s}b_{n,j_1}\cdots b_{n,j_t} \qquad \text{where} \quad \sum_{\substack{1\le p \le s \\ 1\le q \le t}}d_{i_p}+d_{j_q}\le d_r
\end{equation*}

where $c$ is a (possibly zero) constant depending only on $G$, and $s\neq 0$, i.e. there is at least one $a_{n,i}$ term. Employing Lemma \ref{l:o(n)} again it suffices to show that each of these possible coefficients is $o(n^{d_r})$. Indeed there is a constant $c$ (coming from the $O(n^{d(j_q)})$) so that for all $\epsilon>0$ and all sufficiently large $n$ 

\begin{equation*}
\abs{a_{n,i_1}\cdots a_{n,i_s}b_{n,j_1}\cdots b_{n,j_t}}\le \epsilon n^{\sum d_{i_p}} c n^{\sum d_{i_q}} \le c\epsilon n^{d_r}.
\end{equation*}

To see that the same is true for $G_\infty$ with a Carnot-Carath\'eodory norm $\abs{\cdot}_\infty$, note that the proof only used Lemma \ref{l:o(n)} and nilpotency. 
\end{proof}

%SUBSECTION
\subsection{Notation}

All of the above was true of a general finitely generated torsion-free nilpotent group $\Gamma$, though of course the groups $G$, $G_\infty$, as well as the corresponding dimension of the abelianization $d=\dim(G/[G,G])$, the nilpotency step $s$ and the vector space dimension $m$ all depend on $\Gamma$.

Let us fix two finitely generated torsion-free nilpotent groups $\Gamma$ and $\Lambda$ that are integrably measure equivalent with integrable cocycles as in \eqref{eq:cocycles} for which the action $\Gamma\acts (X,m)$ is pmp ergodic. We denote their Mal'cev Lie groups $G$ and $H$ and their graded lie groups $G_\infty$ and $H_\infty$, respectively. Let us now fix finite generating sets $S$ and $T$ for $\Gamma$ and $\Lambda$ respectively. We will denote their respective word norms $\abs{\cdot}_\Gamma$ and $\abs{\cdot}_\Lambda$ and the metrics $d_\Gamma$ and $d_\Lambda$. Let us also fix a compact generating set $K\subset H$ and denote the corresponding word norm and metric $\abs{\cdot}_H$ and $d_H$. Finally, there are the unique Carnot-Carath\'eodory metrics on $H_\infty$ and $G_\infty$ associated to $d_\Gamma$ and $d_\Lambda$ by \cite{Pansu}. Let us denote both by $d_\infty$, as no confusion can arise.

Keep in mind that, since we are not assuming Pansu's Theorem \ref{t:Pansu2} a priori we do not know whether $G_\infty$ and $H_\infty$ are isomorphic groups or that the dimensions of their abelianization are the same. So let us say that in logarithmic coordinates 

\begin{align*}
\Lambda<H\equiv H_\infty&\equiv \bbR^m  &\dim(\mfh/\mfh^2)&=d \\
\Gamma<G\equiv G_\infty &\equiv \bbR^{m'}   &\dim(\mfg/\mfg^2)&=d'.
\end{align*}

We will only work in the Lie algebras $\mfh$ and $\mfh_\infty$ of $H$ and $H_\infty$. Let us identify as in \eqref{e:V-decomp}

\[
V=V_1\oplus \dots \oplus V_s =\mfh=\mfh_\infty
\]

with Lie brackets $[-,-]_H$ and $[-,-]_\infty$. The projections we will use are for $\Lambda$, $H$ and $H_\infty$:

\begin{align*}
\pi_{ab}(a_1,\ldots ,a_m)&=(a_1,\ldots ,a_d,0,\ldots ,0) \\
\pi_{com}(a_1,\ldots ,a_m)&=(0,\ldots ,0,a_{d+1},\ldots ,a_m).
\end{align*}

 We will think of the image $\pi_{ab}(H)\cong\bbR^d$ in order to integrate, but for notational ease we suppress the identification. Now we may define two maps essential to what follows

\begin{align*}
&\alpha_{ab}:\Gamma\times X\to H \qquad &\alpha_{ab}(\gamma,x)=\pi_{ab} \circ \alpha(\gamma,x) \\
&\overline{\alpha_{ab}}:\Gamma\to H \qquad &\overline{\alpha_{ab}}(\gamma)=\int_X \alpha_{ab}(\gamma,x)dm(x).
\end{align*}

\subsection{Reduction to torsion-free nilpotent groups} \label{ss:reduction}
Here we reduce Theorem \ref{T:Main} to the case of torsion-free nilpotent groups. Finitely generated polynomial growth groups have finite index nilpotent subgroups, which themselves have finite normal torsion subgroups. Let $\Gamma'<\Gamma$ be a finite index subgroup. The action $\Gamma'\acts (X,m)$ has at most $[\Gamma:\Gamma']$-many ergodic components permuted by the $\Gamma$ action. Let $\tau_1,\ldots ,\tau_l\in\Gamma$ be a complete set of representatives for $\Gamma' \backslash \Gamma$. Consider an ergodic component $X'$ and the integrable cocycle $\alpha':\Gamma'\times X'\to \Lambda$ obtained by restriction. Suppose $\ssc{n}{\gamma_n} \to g \in G_\infty$. For each $n$ write $\gamma_n=\gamma'_n \tau_{n_i}$ where $\tau_{n_i} \in \{ \tau_1,\ldots ,\tau_l \}$ and $\gamma'_n\in\Gamma'$. Then $\ssc{n}{\gamma'_n}\to g$ so by Theorem \ref{T:Main} $\ssc{n}{\alpha(\gamma'_n,x)}\to \Phi(g)$ for some $\Phi$ that a priori depends on the ergodic component $X'$. Now the cocycle equality
$\alpha(\gamma_n,x)=\alpha(\gamma'_n\tau_{n_i},x)=\alpha(\gamma'_n,\tau_{n_i} x)\alpha(\tau_{n_i},x)$ implies

\[
d_\Lambda(\alpha(\gamma_n,x),\alpha(\gamma'_n,\tau_{n_i}x))=\abs{\alpha(\tau_{n_i},x)}_\Lambda
\]

which is bounded by a constant independent of $n$ with high probability by Markov's inequality. Therefore

\[
d_\infty(\ssc{n}{\alpha(\gamma'_n,x)},\ssc{n}{\alpha(\gamma_n,x)})=o(n) \qquad \text{whp}.
\]

Now let $N$ be a finite normal subgroup of $\Gamma$. Then $\Gamma/N$ acts ergodically by pmp transformations on $(X,m)/N$. Since $N$ is finite, we can find a measurable section $s:X/N\to X$ of $\pi: X\to X/N$. For every $x\in X$, there is $n_x\in N$ so that $n_x \cdot s\pi(x)=x$. Define

\[
f:X \to \Lambda \qquad f(x)=\alpha(n_x,s\pi(x))
\]

and the cocycle cohomologous to $\alpha$ via $f$

\[
\alpha^f(\gamma,x)=f(\gamma x)^{-1}\alpha(\gamma,x)f(x).
\]

Notice that $f$ takes finitely many values, so $\alpha^f$ is integrable. A direction computation shows that $\alpha^f$ restricted to $N$ is the trivial map, so $\alpha^f$ descends to a cocycle

\[
\alpha^f:\Gamma/N\times X/N \to \Lambda.
\]

Finally, if $\gamma_n\in\Gamma$ is such that $\ssc{n}{\gamma_n}\to g$, then also $\ssc{n}{\overline{\gamma_n}}\to g$ where $\overline{\gamma}=\gamma N\in \Gamma/N$. Thus $\ssc{n}{\alpha^f(\overline{\gamma_n},\pi x)}\to \Phi(g)$. Again since $f$ takes finitely many values, another application of the Markov inequality shows that 

\[
d_\Lambda(\alpha^f(\overline{\gamma_n},x),\alpha(\gamma_n,x))=o(n) \quad \text{whp}
\]

which finishes the proof.

%In words, $\alpha_{ab}$ linearly identifies the image of the cocycle in $H_\infty$ and projects to abelian coordinates. $\overline{\alpha_{ab}}$ then averages the result over $X$ thinking of $\pi_{ab}^{H_\infty}(H_{\infty})\cong\bbR^d$.

%%%%%%%%%%%%%%%%%%%%%%%%%%%%%%%%%%%%%%%%%%
%%%%%%SECTION
\section{Asymptotic Behavior Along Iterates}\label{s:iterates}
In this section we analyze the asymptotic behavior of $\alpha(\gamma^n,x)$ as $n\to\infty$ for a given $\gamma\in\Gamma$. In the following section, we use the cocycle equation and the results of this section to understand the asymptotic behavior of an arbitrary $\alpha(\gamma,x)$. The idea in this section is to use the cocycle equation to see that $\alpha(\gamma^n,x)$ typically behaves like a homomorphism in to a nilpotent group. Crucially, one parameter families of elements in nilpotent groups experience an asymptotic decay in the higher order terms (commutator coordinates). In this section we use ergodicity to extend this phenomenon to a cocycle. Moreover, the position in the abelian coordinates stabilizes asymptotically, so that we have a perfect picture of the asymptotics of iterates: the higher order terms vanish, and the abelian coordinates tend to their average value.

%Throughout this section we prove the key results both in $\Lambda$ and in $H_\infty$. This is necessary because the cocycle equation is only true for the multiplication in $\Lambda$. The idea is to prove a given result in $\Lambda$ and then to use the fact that $H_\infty$ is the asymptotic cone of $\Lambda$ to obtain the analogous result in $H_\infty$.

The main result of this section is the following proposition.

%Notice that since $d_\infty$ gives the topology of $H_\infty$ as a manifold, the coordinate free statement that $\delta_{n^{-1}}L\alpha(\gamma^n,x)\to \overline{\alpha_{ab}}(\gamma)$ as $n\to\infty$ with high probability (w.h.p.) in $x$ follows.

%\begin{proposition}\label{p: iterates}
%For all $\gamma\in\Gamma- \Gamma^2$ and for all $\delta>0$
%
%\[
%\lim_{n\to\infty} m(x: d_\infty(\delta_{n^{-1}}\circ L\circ \alpha(\gamma^n,x),\overline{\alpha_{ab}}(\gamma))>\delta)=0.
%\]
%\end{proposition}

%Recall from Fact 3 above that $\delta_{n^{-1}}\circ L=j_n$, so that the proposition describes the asymptotic behavior of $j_n(\alpha(\gamma^n,x))$. In the process of proving Proposition \ref{p: iterates} we will prove the analogous statement in $\Lambda$, below, which we will need for the proof of the main theorem. 

 \begin{proposition}\label{p: iterates H_infty}
 For every $\gamma\in\Gamma$ 
 
 \[
\ssc{n}{\alpha(\gamma^n,x)}\longrightarrow \overline{\alpha_{ab}}(\gamma) \qquad \text{in probability}.
\]
 
 Equivalently,
 
 \[
d_{\infty}(\alpha(\gamma^n,x),\delta_n \overline{\alpha_{ab}}(\gamma))=o(n) \qquad \text{in probability}.
\]
 \end{proposition}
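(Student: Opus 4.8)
The plan is to prove the statement in two parts, tracking the abelian and commutator coordinates of $\alpha(\gamma^n,x)$ separately, and then assembling them via Lemma~\ref{l:oO(n)}. First I would use the cocycle identity to write
\[
\alpha(\gamma^n,x) = \alpha(\gamma,\gamma^{n-1}\cdot x)\,\alpha(\gamma,\gamma^{n-2}\cdot x)\cdots\alpha(\gamma,x),
\]
so that $\alpha(\gamma^n,x)$ is a product of $n$ terms along the orbit of $x$ under the (pmp ergodic) transformation $T=T_\gamma: x\mapsto \gamma\cdot x$. Applying $\pi_{ab}$ — which is a group homomorphism onto $\pi_{ab}(H)\cong\bbR^d$, so it turns the product into a sum — gives $\pi_{ab}\alpha(\gamma^n,x) = \sum_{k=0}^{n-1} \alpha_{ab}(\gamma, T^k x)$. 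Since $\alpha_{ab}(\gamma,\cdot)$ is $L^1$ (each coordinate is dominated by $\abs{\alpha(\gamma,\cdot)}_\Lambda$ up to a constant), Birkhoff's ergodic theorem gives $\frac1n\pi_{ab}\alpha(\gamma^n,x)\to \int_X \alpha_{ab}(\gamma,x)\,dm(x) = \overline{\alpha_{ab}}(\gamma)$ a.e., hence in measure. In terms of the rescaling $\delta_{1/n}$, which acts on $\bbR^d$ by ordinary scaling by $1/n$, this says $\delta_{1/n}\pi_{ab}\alpha(\gamma^n,x)\to\overline{\alpha_{ab}}(\gamma)$ in probability, and equivalently $\abs{(\pi_{ab}\alpha(\gamma^n,x)) - \delta_n\overline{\alpha_{ab}}(\gamma)}_\infty = o(n)$ in probability (note $\delta_n\overline{\alpha_{ab}}(\gamma)$ has vanishing commutator part since $\overline{\alpha_{ab}}(\gamma)\in\pi_{ab}(H)$, so $\delta_n$ just scales it by $n$). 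In particular $\abs{\pi_{ab}\alpha(\gamma^n,x)}_\infty = O(n)$ whp.

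The second and harder part is to show the commutator coordinates are negligible: $\abs{\pi_{com}\alpha(\gamma^n,x)}_\infty = o(n)$ in probability. The intuition, as the text explains, is that a one-parameter subgroup $\{h^n\}$ in a nilpotent group has its degree-$j$ coordinates growing like $n$ (not $n^j$), so after dividing each degree-$j$ coordinate by $n^j$ they vanish — and the cocycle should behave like such a homomorphism in the scaling limit. To make this work I would exploit a telescoping/subadditivity structure. Write $\beta_n = \pi_{com}\alpha(\gamma^n,x)$; using the cocycle identity $\alpha(\gamma^{n+m},x) = \alpha(\gamma^m,\gamma^n\cdot x)\,\alpha(\gamma^n,x)$ together with Lemma~\ref{l:commutator growth} (and its $H$ vs.\ $H_\infty$ interchangeability via the Guivarc'h Lemma~\ref{l:Gui}), and the fact that $\abs{\alpha(\gamma^n,x)}_\Lambda = O(n)$ whp, one gets an almost-subadditive cocycle relation for $\abs{\pi_{com}\alpha(\gamma^n,\cdot)}_\infty$ along the orbit, whose "error" in passing from $n$ to $n+1$ comes from a single step $\alpha(\gamma,\gamma^n\cdot x)$ and from commutators of an $O(n)$-sized element with an $O(1)$-sized element — which by Lemma~\ref{l:commutator growth} is $o(n)$. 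Iterating/integrating this and invoking the (Kingman-type, or direct) control of the orbit averages shows $\abs{\pi_{com}\alpha(\gamma^n,x)}_\infty/n \to 0$ in probability. Concretely I expect the argument to run through a coordinate-by-coordinate induction on degree: having controlled the degree-$<j$ coordinates as $o(n^{j-1})\cdot$(lower), one feeds this into the Baker–Campbell–Hausdorff expansion of the product above to bound the degree-$j$ coordinate.

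Once both pieces are in hand, I would combine them with Lemma~\ref{l:oO(n)}: setting $g_n = \alpha(\gamma^n,x)\in H_\infty$, we have shown (whp) that $\abs{\pi_{com}g_n}_\infty = o(n)$ and $\abs{\pi_{ab}g_n}_\infty = O(n)$, so $\abs{(\pi_{ab}g_n)^{-1}\star g_n}_\infty = o(n)$ whp. Combined with $\abs{(\pi_{ab}g_n) - \delta_n\overline{\alpha_{ab}}(\gamma)}_\infty = o(n)$ from the Birkhoff step and the fact that $(\pi_{ab}g_n)$ and $\delta_n\overline{\alpha_{ab}}(\gamma)$ both lie in the abelian part where $\star$ is ordinary addition, the triangle inequality for $d_\infty$ (via left-invariance, which holds here) yields
\[
d_\infty\bigl(\alpha(\gamma^n,x),\,\delta_n\overline{\alpha_{ab}}(\gamma)\bigr) = o(n)\qquad\text{in probability},
\]
which is precisely the claim, and its equivalence with $\ssc{n}{\alpha(\gamma^n,x)}\to\overline{\alpha_{ab}}(\gamma)$ is immediate from \eqref{e:lengthscaling} and the definition of convergence in the asymptotic cone. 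The main obstacle is the commutator estimate: unlike the abelian part, there is no homomorphism making $\pi_{com}\alpha(\gamma^n,x)$ an honest Birkhoff sum, so one must carefully use the cocycle equation plus Lemma~\ref{l:commutator growth} to show that the commutator coordinates accumulate sublinearly even though individual steps are only $O(1)$ in size — the delicate point being to get the errors to be genuinely $o(n)$ uniformly enough to survive the "in probability" quantifiers, which is where ergodicity (through the $O(n)$ control on $\abs{\alpha(\gamma^n,x)}_\Lambda$ whp) enters.
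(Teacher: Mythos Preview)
Your overall architecture --- treat the abelian and commutator coordinates separately and reassemble via Lemma~\ref{l:oO(n)} --- is exactly the paper's, and your abelian step (Birkhoff applied to the $\bbR^d$-valued cocycle $\alpha_{ab}$) and final triangle-inequality assembly are both correct and match the paper's argument.

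The gap is in the commutator step. Your proposed mechanism --- pass from $n$ to $n+1$ via the cocycle identity, bound the BCH correction by Lemma~\ref{l:commutator growth} applied to an $O(1)$ step against an $O(n)$ tail, then ``iterate/integrate'' --- only recovers the $O(n)$ bound, not $o(n)$. Concretely, in the Heisenberg case the degree-$2$ increment at step $k$ is $\tfrac12(a_kB_k-b_kA_k)$, which is genuinely of order $k$ (not $o(k)$) for a typical step; summing gives a degree-$2$ coordinate of order $n^2$, i.e.\ word norm $O(n)$. Lemma~\ref{l:commutator growth} tells you the group commutator has word norm $o(n)$, which for a degree-$2$ coordinate only says it is $o(n^2)$ --- far too weak to sum. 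Nothing subadditive or Kingman-type closes this: $\abs{\pi_{com}\alpha(\gamma^n,\cdot)}_\infty$ is \emph{not} subadditive, and the additive defect is exactly the $O(n)$ cross-term you are trying to control.

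What you are missing is the cancellation coming from the fact that the abelianizations of the pieces are all close to the \emph{same} vector $v=\overline{\alpha_{ab}}(\gamma)$, so the antisymmetric BCH cross-terms (e.g.\ $a_kB_k-b_kA_k$) are small. The paper exploits this not step-by-step but by splitting $\alpha(\gamma^{k\eta},x)$ into $k$ blocks of length $\eta$: each block has commutator word norm $\le M\eta$ (the $O(n)$ bound, Lemma~\ref{l:M'}) and abelianization $\approx\eta v$ (Lemma~\ref{l:abel Lambda}); Lemma~\ref{l:t} then shows via BCH that when $k$ nearly-parallel pieces are multiplied, the degree-$(t{+}1)$ coordinate of the product is $\le \delta\cdot k\eta$, because the ``area'' terms between nearly-parallel vectors vanish and the remaining linear contributions, summed over $k$ blocks, are $\sim k(\eta M)^{d_{t+1}}$, which beats $(k\eta)^{d_{t+1}}$ once $k$ is large. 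An induction on the degree $t$ (Lemma~\ref{l:comm lambda}) then promotes the $O(n)$ bound to $o(n)$. Your sketch gestures at a degree induction but never isolates this parallel-cancellation mechanism, and the appeal to Lemma~\ref{l:commutator growth} is a red herring here.
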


We will prove Proposition \ref{p: iterates H_infty} by analyzing the abelian and commutator coordinates separately.

%%SUBSECTION
\subsection{Abelianization Direction}
In this subsection we prove the following lemma describing the asymptotic behavior of $\alpha$ along iterates in the abelianization.

%\begin{lemma} \label{l:abel}
%For a.e. $x\in X$ and every $\gamma\in\Gamma-\Gamma^2$
%
%\[
%\lim_{n\to\infty}\delta_{n^{-1}}\alpha_{ab}(\gamma^n,x)= \overline{\alpha_{ab}}(\gamma).
%\]
%where the convergence is of vectors in $\bbR^d$.
%
%\end{lemma}

\begin{lemma}\label{l:abel Lambda}
For a.e. $x\in X$ and every $\gamma\in\Gamma$

\[
\frac{1}{n}\alpha_{ab}(\gamma^n,x)\to \overline{\alpha_{ab}}(\gamma)
\]

where the convergence is of vectors in $\bbR^d$.
\end{lemma}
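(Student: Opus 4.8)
The plan is to recognise $\alpha_{ab}$ as an additive, $\bbR^d$-valued cocycle, apply Birkhoff's pointwise ergodic theorem to the single p.m.p.\ transformation $x\mapsto\gamma\cdot x$, and then upgrade the resulting $\langle\gamma\rangle$-invariant limit to an honest constant using ergodicity of the \emph{full} $\Gamma$-action together with the sublinear growth of conjugates in a nilpotent group. First note that in logarithmic coordinates $\pi_{ab}$ is the linear projection onto the first $d$ coordinates, which by the Baker--Campbell--Hausdorff formula is a group homomorphism $H\to\bbR^d$ (every correction term to a product lies in $[\mfh,\mfh]=\mfh^2$). Hence, by the cocycle identity \eqref{eq:cocycles}, $c:=\alpha_{ab}=\pi_{ab}\circ\alpha$ is an \emph{additive} cocycle, $c(\gamma_1\gamma_2,x)=c(\gamma_1,\gamma_2\cdot x)+c(\gamma_2,x)$. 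A group homomorphism is Lipschitz for the word metrics, so $\abs{\pi_{ab}\lambda}\le C\abs{\lambda}_\Lambda$ for some $C$, and integrability of $\alpha$ gives $c(\gamma,\cdot)\in L^1(X,m;\bbR^d)$ with $\|\,\abs{c(g,\cdot)}\,\|_1\le CM\abs{g}_\Gamma$, where $M=\max_{s\in S}\|\,\abs{\alpha(s,\cdot)}_\Lambda\,\|_1$. Iterating the additive identity yields $c(\gamma^n,x)=\sum_{k=0}^{n-1}c(\gamma,\gamma^k\cdot x)$, so Birkhoff's theorem (applied coordinatewise) shows that $\frac1n c(\gamma^n,x)$ converges $m$-a.e.\ and in $L^1$ to a function $\phi_\gamma$ invariant under $x\mapsto\gamma\cdot x$ and satisfying $\int_X\phi_\gamma\,dm=\int_X c(\gamma,\cdot)\,dm=\overline{\alpha_{ab}}(\gamma)$. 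It therefore suffices to prove that $\phi_\gamma$ equals the constant $\overline{\alpha_{ab}}(\gamma)$ $m$-a.e.; since $\Gamma$ is countable this may be done one $\gamma$ at a time.

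The heart of the matter is to show that $\phi_\gamma$ is $\Gamma$-invariant, for then ergodicity of $\Gamma\acts(X,m)$ forces it to be a.e.\ constant, necessarily $\int_X\phi_\gamma=\overline{\alpha_{ab}}(\gamma)$. Fix $\sigma\in\Gamma$. Applying the cocycle identity twice — once to $\gamma^n\sigma=\gamma^n\cdot\sigma$ and once to $\gamma^n\sigma=(\gamma^n\sigma\gamma^{-n})\cdot\gamma^n$ — gives the identity
\[
 c(\gamma^n,\sigma\cdot x)-c(\gamma^n,x)=c(\gamma^n\sigma\gamma^{-n},\gamma^n\cdot x)-c(\sigma,x).
\]
Divide by $n$. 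The term $\frac1n c(\sigma,x)\to 0$ pointwise. For the other term, observe that $\gamma^n\sigma\gamma^{-n}$ is a conjugate of the fixed element $\sigma$ in a nilpotent group, so $\abs{\gamma^n\sigma\gamma^{-n}}_\Gamma=o(n)$: this follows from Lemma \ref{l:commutator growth} applied to $g_n=\sigma$, $h_n=\gamma^{-n}$, together with Lemma \ref{l:o(n)}; alternatively $\ssc{n}{\gamma^n\sigma\gamma^{-n}}\to e$ in $G_\infty$ by Lemma \ref{L:multiplication}, and then \eqref{e:lengthscaling} gives $\frac1n\abs{\gamma^n\sigma\gamma^{-n}}_\Gamma\to 0$. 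Consequently, by Markov's inequality and $m$-invariance of $x\mapsto\gamma^n\cdot x$,
\[
 m\bigl(\{x:\abs{c(\gamma^n\sigma\gamma^{-n},\gamma^n\cdot x)}\ge n\delta\}\bigr)\le\frac{1}{n\delta}\,\bigl\|\,\abs{c(\gamma^n\sigma\gamma^{-n},\cdot)}\,\bigr\|_1\le\frac{CM\,\abs{\gamma^n\sigma\gamma^{-n}}_\Gamma}{n\delta}\longrightarrow 0,
\]
so $\frac1n c(\gamma^n\sigma\gamma^{-n},\gamma^n\cdot x)\to 0$ in probability. But the left-hand side of the displayed identity, divided by $n$, converges $m$-a.e.\ to $\phi_\gamma(\sigma\cdot x)-\phi_\gamma(x)$, and a sequence converging a.e.\ and converging to $0$ in probability has a.e.\ limit $0$; hence $\phi_\gamma(\sigma\cdot x)=\phi_\gamma(x)$ for $m$-a.e.\ $x$. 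As $\sigma$ was arbitrary and $\Gamma$ is countable, $\phi_\gamma$ is $\Gamma$-invariant, completing the plan.

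I expect this last step to be the main obstacle, precisely because Birkhoff's theorem only provides invariance under the cyclic subgroup $\langle\gamma\rangle$, which may act on $(X,m)$ with an enormous invariant $\sigma$-algebra, so ergodicity cannot be invoked directly; the displayed cocycle identity is the device that trades a comparison of $\phi_\gamma$ at $x$ and $\sigma\cdot x$ for control of the cocycle along the "error" element $\gamma^n\sigma\gamma^{-n}$, and it is nilpotency that makes this error grow sublinearly (in a free group it grows linearly and the argument collapses), after which $L^1$ integrability closes it via Markov. A minor point requiring care is the passage from convergence in probability to a conclusion about the a.e.\ limit. Finally, it is worth recording along the way — it is used repeatedly in the sequel — that $\overline{\alpha_{ab}}\colon\Gamma\to\bbR^d$ is a group homomorphism, which one obtains by integrating the additive cocycle identity and using $m$-invariance of the $\Gamma$-action; in particular it kills $[\Gamma,\Gamma]$, so $\phi_\gamma$ depends only on $\gamma$ modulo the commutator subgroup, as it must.
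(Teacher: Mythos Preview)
Your argument is correct, and it is essentially a self-contained proof of what the paper packages as Proposition~\ref{p:one dim}. The paper's own proof of Lemma~\ref{l:abel Lambda} is one line: decompose $\alpha_{ab}$ into $d$ real-valued additive cocycles and apply Proposition~\ref{p:one dim} (quoted from \cite{Austin} and \cite{CF}) to each coordinate. You instead unpack that black box: Birkhoff for the single transformation $x\mapsto\gamma\cdot x$ gives a $\langle\gamma\rangle$-invariant limit, and you promote this to $\Gamma$-invariance via the cocycle identity $c(\gamma^n,\sigma x)-c(\gamma^n,x)=c(\gamma^n\sigma\gamma^{-n},\gamma^n x)-c(\sigma,x)$ together with the nilpotent fact $\abs{\gamma^n\sigma\gamma^{-n}}_\Gamma=o(n)$ and Markov. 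This is a genuinely different route: the paper defers to an external result (whose proof in the cited references goes through subadditive ergodic theory over the full group), whereas your argument stays with the pointwise theorem for a single transformation and uses nilpotency directly to pass from $\langle\gamma\rangle$-invariance to $\Gamma$-invariance. The trade-off is that your proof is more elementary and self-contained in the present setting, while the paper's cited proposition is stated without any nilpotency hypothesis on $\Gamma$ and so is nominally more general; in the context of this paper either suffices.
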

The proof of the lemma is an easy application of the following found in the more general sub-additive case in \cite{Austin} and \cite{CF}.

\begin{proposition}\label{p:one dim}
Suppose $c:\Gamma\times X\to\bbR$ is a measurable cocycle over $\Gamma\curvearrowright (X,m)$ which is pmp ergodic. Then for a.e. $x\in X$ and every $\gamma\in\Gamma$

\[
\frac{1}{n}c(\gamma^n,x)\to\int_X c(\gamma,x)dm(x).
\]
\end{proposition}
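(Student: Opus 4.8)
The plan is to reduce this to the classical Birkhoff pointwise ergodic theorem by a standard telescoping argument. First I would fix $\gamma\in\Gamma$ and consider the single pmp transformation $T=T_\gamma$ of $(X,m)$ given by $Tx=\gamma\cdot x$. Set $\varphi(x)=c(\gamma,x)$, which is an integrable real-valued function by hypothesis (integrability of the cocycle, since $\int_X\abs{c(\gamma,\cdot)}\,dm<\infty$). The cocycle identity \eqref{eq:cocycles} applied repeatedly to $\gamma^n=\gamma\cdot\gamma^{n-1}$ gives the telescoping formula
\[
c(\gamma^n,x)=\sum_{k=0}^{n-1} c(\gamma,\gamma^k\cdot x)=\sum_{k=0}^{n-1}\varphi(T^k x),
\]
which I would verify by induction on $n$: indeed $c(\gamma^{k+1},x)=c(\gamma,\gamma^k\cdot x)c(\gamma^k,x)$ and $\bbR$ is written additively, so $c(\gamma^{k+1},x)=\varphi(T^k x)+c(\gamma^k,x)$.

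Next I would invoke the Birkhoff ergodic theorem for the (not necessarily ergodic) transformation $T$: the averages $\frac1n\sum_{k=0}^{n-1}\varphi(T^kx)$ converge for a.e. $x$ to $\bbE[\varphi\mid\mathcal I_T](x)$, where $\mathcal I_T$ is the $\sigma$-algebra of $T$-invariant sets. The remaining point is to identify this conditional expectation with the constant $\int_X\varphi\,dm=\int_X c(\gamma,x)\,dm(x)$. This is where one uses that $\Gamma\curvearrowright(X,m)$ is ergodic \emph{as a $\Gamma$-action}. The subtlety is that $T_\gamma$ alone need not be ergodic, so one cannot directly conclude. I would argue as follows: the a.e.\ limit $L(x)=\lim_n\frac1n c(\gamma^n,x)$, wherever it exists, is $T_\gamma$-invariant (replacing $x$ by $Tx$ shifts the sum by one term and divides by $n$, changing nothing in the limit). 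To upgrade $T_\gamma$-invariance to $\Gamma$-invariance — and hence, by ergodicity of the $\Gamma$-action, to constancy — one shows $L$ is invariant under the full group; this follows from the cocycle relation together with the fact that for any $\sigma\in\Gamma$, $c(\gamma^n,\sigma\cdot x)$ and $c(\gamma^n,x)$ differ (via $\gamma^n\sigma=\sigma(\sigma^{-1}\gamma^n\sigma)$ and, since $\Gamma$ is nilpotent, $\sigma^{-1}\gamma^n\sigma$ is close to $\gamma^n$ in a controlled way) by a term that is $o(n)$ after dividing — more cleanly, one notes $c(\sigma\gamma^n,x)=c(\sigma,\gamma^n\cdot x)+c(\gamma^n,x)$ and $c(\sigma\gamma^n,x)=c(\gamma^n,\sigma^{-1}\cdot(\sigma x))+c(\sigma,x)$ combined with commutativity up to bounded error. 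Once $L$ is $\Gamma$-invariant it is a.e.\ constant, and integrating $\frac1n c(\gamma^n,x)=\frac1n\sum_{k<n}\varphi(T^kx)$ over $X$ (the integrands are dominated appropriately, e.g.\ by the $L^1$ bound $\int\abs{c(\gamma^n,\cdot)}\le n\int\abs{c(\gamma,\cdot)}$, so by the dominated/bounded convergence argument for $L^1$ ergodic averages the limit's integral is $\int\varphi\,dm$) identifies the constant as $\int_X c(\gamma,x)\,dm(x)$.

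The main obstacle is precisely this last identification: passing from invariance under the single transformation $T_\gamma$ to invariance under all of $\Gamma$, so that ergodicity of the ambient $\Gamma$-action can be applied. In the abelian or nilpotent setting this is routine because conjugates of $\gamma^n$ stay at bounded distance in the relevant (abelianized) sense, but it must be handled with a little care; alternatively, one can sidestep it entirely by citing the subadditive/additive cocycle ergodic theorem as it appears in \cite{Austin} and \cite{CF}, which is stated for ergodic $\Gamma$-actions and yields the constant $\int_X c(\gamma,\cdot)\,dm$ directly. Everything else — the telescoping identity and the invocation of Birkhoff — is immediate.
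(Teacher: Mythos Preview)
The paper does not actually prove this proposition: it simply records it as a known fact, pointing to \cite{Austin} and \cite{CF} where the (more general) subadditive version is established. Your fallback suggestion of citing those references is therefore exactly what the paper does.

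Your direct Birkhoff argument, however, has a real gap at the step you yourself flag. The telescoping $c(\gamma^n,x)=\sum_{k<n}\varphi(T_\gamma^k x)$ and the application of Birkhoff are fine, giving $L_\gamma(x)=\bbE[\varphi\mid\mathcal I_{T_\gamma}](x)$ a.e. The problem is the promotion from $T_\gamma$-invariance to $\Gamma$-invariance. Your ``more cleanly'' identity is miswritten: $c(\sigma\gamma^n,x)=c(\gamma^n,\sigma^{-1}\cdot(\sigma x))+c(\sigma,x)$ would force $c(\sigma,\gamma^n x)=c(\sigma,x)$, which is false. What one can legitimately extract from the cocycle relation (using $\sigma^{-1}\gamma^n\sigma=(\sigma^{-1}\gamma\sigma)^n$ and that $\frac{1}{n}c(\sigma,\cdot)\circ T^n\to 0$ a.e.) is the twisted equivariance
\[
L_\gamma(\sigma\cdot x)=L_{\sigma^{-1}\gamma\sigma}(x)\quad\text{a.e.}
\]
In the abelian case $\sigma^{-1}\gamma\sigma=\gamma$ and you are done. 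In the genuinely nilpotent case you still need $L_\gamma=L_{\sigma^{-1}\gamma\sigma}$, i.e.\ $L_\gamma=L_{\gamma\tau}$ for $\tau\in[\Gamma,\Gamma]$; the phrase ``commutativity up to bounded error'' does not establish this. Even when $\tau$ is central one is led to diagonal averages of the form $\frac{1}{n}c(\gamma^n,\tau^n x)$, which are not handled by a single application of Birkhoff. Carrying this through (e.g.\ by induction on the lower central series) essentially rebuilds the content of the cited references, so invoking \cite{Austin}, \cite{CF} directly---as both you and the paper ultimately do---is the honest route.
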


\begin{proof}[Proof of Lemma \ref{l:abel Lambda}]
$\alpha_{ab}$ is itself a cocycle taking values in $\bbR^d$ which we can decompose as $d$ independent cocycles with values in $\bbR$. Indeed there are cocycles $\alpha_i:\Gamma\times X\to\bbR$ for $1\le i\le d$ so that

\[
\alpha_{ab}(\gamma,x)=(\alpha_1(\gamma,x),\ldots ,\alpha_d(\gamma,x),0,\ldots ,0).
\]

We can similarly decompose the averages

\[
 \overline{\alpha_{ab}}(\gamma)=(\int_X \alpha_1(\gamma,x)dm(x),\ldots ,\int_X\alpha_d(\gamma,x),0,\ldots ,0).
\]
Applying Proposition \ref{p:one dim} to each of the $\alpha_i$ finishes the proof.
\end{proof}

%SUBSECTION
\subsection{Commutator Direction}
The purpose of this subsection is to prove the following lemma describing the asymptotic behavior of $\alpha$ along iterates in the commutator direction. 

%\begin{lemma}\label{l:main com}
%For any $\gamma\in\Gamma - [\Gamma,\Gamma]$ and for any $\delta>0$
%\[
%\lim_{n\to\infty} m(x:\abs{\delta_{n^{-1}}\circ\pi_{com}^{H_\infty}\circ L\circ \alpha(\gamma^n,x)}_{H_\infty}>\delta)=0.
%\]
%\end{lemma}

\begin{lemma}\label{l:comm lambda}
For every $\gamma\in \Gamma$ 
\[
\abs{\pi_{com} \circ\alpha(\gamma^n,x)}_\Lambda=o(n) \qquad \text{in probability}.
\]

Moreover the same is true replacing the norm $\abs{\cdot}_\Lambda$ with $\abs{\cdot}_\infty$.
\end{lemma}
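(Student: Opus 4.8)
The plan is to deduce the commutator estimate from the abelianization behavior (Lemma \ref{l:abel Lambda}) together with the cocycle identity and a one-parameter-family decay argument inside the nilpotent group $\Lambda$. The starting point is the cocycle relation $\alpha(\gamma^{n+k},x) = \alpha(\gamma^n, \gamma^k\cdot x)\,\alpha(\gamma^k,x)$; iterating this with $k$ fixed (say $k=1$) writes $\alpha(\gamma^n,x)$ as an ordered product $\prod_{j=0}^{n-1}\alpha(\gamma,\gamma^j\cdot x)$ of elements that are bounded in $\Lambda$-norm in a Markov (hence whp) sense. The idea of Pansu is that such an ordered product of $n$ bounded elements, whose abelian parts average to a fixed vector $v := \overline{\alpha_{ab}}(\gamma) \in \bbR^d$ by Lemma \ref{l:abel Lambda}, behaves to leading order like $\delta_n$ of an element with abelian part $v$ and vanishing commutator part: the commutator coordinates of the product grow strictly slower than the expected degree because the abelian increments concentrate around their mean. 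First I would make this precise by comparing $\alpha(\gamma^n,x)$ with the "straightened" path: set $a_j := \alpha_{ab}(\gamma,\gamma^j\cdot x) \in \bbR^d$, note $\frac1n\sum_{j<n} a_j \to v$ a.e., and estimate how far the true product is from the product in which each abelian step is replaced by $v$.

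The key technical step is the following: if $w_j \in \Lambda$ (or in $H$, or in $H_\infty$) satisfy $|w_j|\le C$ for all $j$, and $\frac1n\sum_{j=0}^{n-1}\pi_{ab}(w_j)$ converges, then $|\pi_{com}(w_0 w_1\cdots w_{n-1})| = o(n)$. This is where the Baker--Campbell--Hausdorff formula enters: in logarithmic coordinates the commutator coordinates of the product are polynomials in the coordinates of the $w_j$ in which every monomial of degree-$t$ weight involves at least one commutator, and therefore at least one difference of partial sums $S_k - \frac{k}{n}S_n$ of the abelian increments (the "fluctuation" of the random walk in $\bbR^d$ about its mean drift), which is $o(n)$ by the a.e. convergence of Cesàro averages; combined with the weight bound $\sum d_{i_r}\le d_t$ from Lemma \ref{F:degree} and the fact that the remaining factors are $O(n^{d_{i_r}})$, each such monomial is $o(n^{d_t})$, and by Lemma \ref{l:o(n)} this is exactly $|\pi_{com}(\cdot)| = o(n)$. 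One may prefer to phrase this as: $\alpha(\gamma^n,x)$ differs from a genuine homomorphic image (an iterate $\sigma^n$ for a suitable $\sigma$, or $\delta_n(\text{something with zero commutator part})$) by a correction whose commutator norm is controlled by the fluctuations of the ergodic sums, which are $o(n)$ by the pointwise ergodic theorem applied to $\alpha_{ab}$.

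To organize the whp statement cleanly: by Egorov's theorem the a.e. convergence in Lemma \ref{l:abel Lambda} is uniform off a set of small measure, and by the Poincaré recurrence / Markov argument the values $|\alpha(\gamma,\gamma^j\cdot x)|_\Lambda$ are "mostly bounded" in the sense that for any $\epsilon$ there is $C$ with $m\{x : |\alpha(\gamma,x)|_\Lambda > C\} < \epsilon$, so the exceptional indices $j\le n$ with a large increment form a density-zero set whp and contribute $o(n)$ to every commutator coordinate (using subadditivity of $|\cdot|$ and Lemma \ref{l: compare com} to absorb their effect). The main obstacle I anticipate is precisely the bookkeeping of these "bad" steps: a single large increment $w_j$ could a priori spoil a top-degree commutator coordinate by as much as $\sim n^{d_t - 1}\cdot|w_j|^{\text{something}}$, so one must either truncate the cocycle (replace $\alpha$ by $\min(\alpha,C)$ in a suitable sense) and control the error of truncation in probability, or argue that the bad indices are sparse enough that even their joint contribution stays $o(n^{d_t})$. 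Once the $\Lambda$-norm version is established, the "moreover" for $|\cdot|_\infty$ is immediate since on $\Lambda < H \equiv H_\infty$ the norms $|\cdot|_\Lambda$ and $|\cdot|_\infty$ are quasi-isometric by Lemma \ref{l:o(n)}, so $|\pi_{com}\circ\alpha(\gamma^n,x)|_\Lambda = o(n)$ transfers directly to $|\pi_{com}\circ\alpha(\gamma^n,x)|_\infty = o(n)$.
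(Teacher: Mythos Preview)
Your approach shares the same core ingredients as the paper's (the cocycle identity, Lemma~\ref{l:abel Lambda}, Baker--Campbell--Hausdorff, and the antisymmetry of the bracket), and your ``moreover'' is handled exactly as the paper does it, via Lemma~\ref{l:o(n)}. The execution, however, is genuinely different, and the difference is precisely what resolves the obstacle you yourself flag.

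The paper does \emph{not} work with single steps $\alpha(\gamma,\gamma^jx)$. Instead it first disposes of the easy case $\gamma\in\Gamma^2$ (where $\abs{\gamma^n}_\Gamma=O(\sqrt{n})$ already forces $\abs{\alpha(\gamma^n,\cdot)}_\Lambda=o(n)$ whp by Markov), and for $\gamma\notin\Gamma^2$ it imports from \cite{Austin} an a priori $O(n)$ bound $\abs{\pi_{com}\alpha(\gamma^n,x)}_\Lambda\le M'n$ whp (Lemma~\ref{l:M'}). It then runs an \emph{induction on the coordinate} $t$, proving $\abs{\pi_t\alpha(\gamma^n,x)}_\Lambda=o(n)$ whp for each $d\le t\le m$. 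The inductive step is: write $\alpha(\gamma^{k\eta},x)$ as a product of a \emph{fixed} number $k$ of blocks $\lambda_i=\alpha(\gamma^\eta,\gamma^{i\eta}x)$ with $\eta\to\infty$; each block has abelian part within $\eta\delta'$ of $\eta v$ (Lemma~\ref{l:abel Lambda}), first $t$ commutator coordinates $<\eta\delta'$ (inductive hypothesis), and full commutator bounded by $\eta M'$ (the $O(n)$ bound), each with probability $\ge 1-\epsilon/k$. A deterministic BCH lemma (Lemma~\ref{l:t}) then bounds the $(t{+}1)$-st coordinate of the product of $k$ such elements by $k\eta\delta$.

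This chunking into $k$ (fixed) blocks is exactly the device that sidesteps your unbounded-increment problem: one only needs whp control on $k$ events, not on $n$ events simultaneously, so a union bound costs $\epsilon$ rather than something growing with $n$. By contrast, in your single-step scheme the truncation you propose is delicate for two reasons: truncating $\alpha(\gamma,\cdot)$ destroys the cocycle identity, so $\prod_j\tilde w_j$ is no longer $\alpha(\gamma^n,x)$; and replacing a single step $w_j$ deep inside a nilpotent product alters degree-$d_t$ coordinates by terms of size $O(n^{d_t-1})\cdot\abs{w_j}$ (conjugation in a nilpotent group is not an isometry), so controlling the cumulative effect of the bad indices requires a separate argument of the same order of difficulty as the lemma itself. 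Your ``key technical step'' for \emph{bounded} $w_j$ is correct and is morally the $k\to\infty$, $\eta=1$ version of Lemma~\ref{l:t}; the paper's choice of fixed $k$ and $\eta\to\infty$ is what makes the passage from ``bounded'' to ``$L^1$'' painless.
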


The moreover statement follows immediately from Lemma \ref{l:o(n)}. The proof of the main statement requires some preparation. The idea is to use the cocycle equation to write $\alpha(\gamma^{nk},x)=\alpha(\gamma^n,x_1)\alpha(\gamma^n,x_2)\cdots\alpha(\gamma^n,x_k)$ where $x_{i+1}=\gamma^{ni}x$. Using Lemma \ref{l:abel Lambda} whp the abelianization of each of the $\alpha(\gamma^n,x_i)\approx nv$ for some $v$, so that the commutator of $\alpha(\gamma^{nk},x)$ is roughly the sum of the commutators of the $\alpha(\gamma^n,x_i)$. This allows us to promote a linear bound on the commutator to an $o(n)$ bound since the commutator direction `should' grow at least quadratically. 
%Finally, we pass from $\Lambda$ to $H_\infty$ using a geometric Lemma. We had to work in $\Lambda$ initially because, unlike in the abelian coordinates, the cocycle equation does not project to a cocycle in the commutator of $H_\infty$. In other words, projecting to the abelian coordinates one can not distinguish the group laws of $\Lambda$ and of $H_\infty$ (they are both abelian), whereas projecting to the commutator coordinates and multiplying distinguishes them in general.

To begin, we use a weakened form of Proposition 3.2 from \cite{Austin} to obtain the $O(n)$ bound. Recall that given a pmp action $\Gamma \curvearrowright (X,m)$ a map $c:\Gamma\times X\to \bbR_+$ is a subadditive cocycle if 

\[
c(\gamma_1\gamma_2,x)\le c(\gamma_1,\gamma_2\cdot x)+c(\gamma_2,x) \qquad \forall \gamma_1,\gamma_2\in\Gamma~~m-a.e. x\in X.
\]

\begin{proposition}\label{p: z growth}
Given a subadditive cocycle $c:\Gamma\times X\to \bbR_+$, there is $M\ge 1$ such that for any $\epsilon>0$ there is $C=C(\epsilon)$ such that

\[
\abs{\gamma}_\Gamma \ge C\quad \implies \quad m(\abs{c(\gamma,x)}\ge M\abs{\gamma}_\Gamma)<\epsilon.
\]
\end{proposition}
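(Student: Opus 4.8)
The plan is to apply Kingman's subadditive ergodic theorem along one‑parameter subgroups of $\Gamma$ and to patch the resulting estimates together, using subadditivity of $c$ and compactness of the unit sphere of $G_\infty$, into a bound uniform in $\gamma$. First I would record what is \emph{not} enough. Set $A:=\max_{s\in S}\norm{c(s,\cdot)}_1$, which is finite by integrability (in the application $c(\gamma,x)=\abs{\alpha(\gamma,x)}_\Lambda$). Subadditivity of $c$ together with invariance of $m$ under $\Gamma$ shows that $f(\gamma):=\norm{c(\gamma,\cdot)}_1$ satisfies $f(\gamma_1\gamma_2)\le f(\gamma_1)+f(\gamma_2)$, whence $f(\gamma)\le A\abs{\gamma}_\Gamma$, and Markov's inequality gives $m(c(\gamma,x)\ge M\abs{\gamma}_\Gamma)\le A/M$ for \emph{every} $\gamma\neq e$. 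This is insufficient: the Proposition demands that this probability tend to $0$ as $\abs{\gamma}_\Gamma\to\infty$ for a single fixed $M$, i.e. that the law of $c(\gamma,x)/\abs{\gamma}_\Gamma$ \emph{concentrates}, which is a genuine ergodic statement rather than a consequence of mere integrability.

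The concentration comes from the cocycle equation. Fix $\gamma$ with $\abs{\gamma}_\Gamma=k$ and a geodesic word $\gamma=s_1\cdots s_k$ in $S$. Iterating subadditivity and the cocycle relation,
\[
c(\gamma,x)\ \le\ \sum_{j=1}^{k}c(s_j,\sigma_j\cdot x)\ \le\ \sum_{j=1}^{k}F(\sigma_j\cdot x),\qquad \sigma_j:=s_{j+1}\cdots s_k,\quad F:=\max_{s\in S}c(s,\cdot)\in L^1 ,
\]
so it suffices to bound the averages $\tfrac1k\sum_{j=1}^{k}F(\sigma_j\cdot x)$ in probability, uniformly over geodesic words of length $k$, as $k\to\infty$. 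When the word is a power of a single generator this is exactly Birkhoff's theorem, and along iterates $\gamma^n$ of an arbitrary element it follows from Birkhoff/Kingman applied to the $\bbZ$‑action generated by $\gamma$ (using $\abs{\gamma^n}_\Gamma\ge l\cdot n$ from Lemma~\ref{l:n grows} when $\gamma\notin\Gamma^2$, and the slower but still diverging lower bound on $\abs{\gamma^n}_\Gamma$ from Lemma~\ref{l:o(n)} otherwise). This averaging statement, together with its uniformity over all elements of a given length — equivalently over all directions in the asymptotic cone — is precisely Proposition~3.2 of \cite{Austin}, of which the present statement is a weakening, so one route is simply to cite it.

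For a self‑contained proof of the weakened form I would use that the unit sphere $\{g\in G_\infty:\abs{g}_\infty=1\}$ is compact: cover it by finitely many balls of small radius centered at $g_1,\dots,g_N$, fix for each $i$ a sequence $\gamma_{i,n}\in\Gamma$ with $\tfrac1n\bullet\gamma_{i,n}\to g_i$, and bound $c$ along these finitely many sequences by Kingman as above, obtaining a finite $M_0$ with $c(\gamma_{i,n},x)\le M_0 n$ whp for all $i$. Given an arbitrary $\gamma$ of large norm $n=\abs{\gamma}_\Gamma$, the rescaled element $\tfrac1n\bullet\gamma$ lies near some $g_i$, so by \eqref{e:lengthscaling} and Lemma~\ref{L:multiplication} the correction $\gamma_{i,n}^{-1}\gamma$ has norm $o(n)$; subadditivity gives $c(\gamma,x)\le c(\gamma_{i,n},\gamma_{i,n}^{-1}\gamma\cdot x)+c(\gamma_{i,n}^{-1}\gamma,x)$, and one wants to absorb the second term and the basepoint change into $M:=M_0+1$ using the geometric estimates of \S\ref{ss:nilgeom}. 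The main obstacle is exactly this last step: the decomposition along a geodesic word is \emph{not} a Birkhoff average over a single transformation, and controlling the $o(n)$ correction term uniformly — it ranges over a ball of radius $o(n)$, not over a bounded set — is itself a miniature of the whole problem, so some form of the amenable‑group (nilpotent, polynomial growth, tempered Følner) ergodic theorem, or a careful induction on the correction, is unavoidable; making this rigorous is in effect reproving Proposition~3.2 of \cite{Austin}.
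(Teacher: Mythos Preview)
The paper does not prove this proposition at all: it is stated without proof, introduced only as ``a weakened form of Proposition~3.2 from \cite{Austin}''. So your first route --- simply citing Austin --- is exactly the paper's own approach, and that suffices.

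Your attempt at a self-contained argument is honest about its gap, and the diagnosis is correct: the compactness-plus-Kingman scheme reduces the problem to controlling the correction term $c(\gamma_{i,n}^{-1}\gamma,x)$ uniformly over a ball of radius $o(n)$, which is the same kind of uniform subadditive ergodic statement you started with, just at a smaller scale. Without an independent mechanism (e.g.\ a tempered F{\o}lner or Lindenstrauss-type pointwise ergodic theorem for the amenable group $\Gamma$, or a careful induction on scales), this step is circular. One small correction: your preliminary Markov bound $m(c(\gamma,x)\ge M\abs{\gamma}_\Gamma)\le A/M$ already yields the \emph{exact} statement of the Proposition if one only demands $m(\cdots)<\epsilon$ for \emph{some} fixed $M$ depending on nothing --- but the Proposition asks for a single $M$ with the probability going to $0$ as $\abs{\gamma}_\Gamma\to\infty$, which is what you correctly isolate as the real content.
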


We would like to use Proposition \ref{p: z growth} to draw conclusions about the size of the commutator of $\alpha(\gamma,x)$. To do this, we use Lemma \ref{l: compare com} which says that projection to the commutator increases word norm by at most a universal multiplicative constant, and Lemma \ref{l:n grows} which says that the norm of iterates of an element with nontrivial abelianization grows linearly up to a multiplicative constant. Combining this with Proposition \ref{p: z growth} we easily deduce the following $O(n)$ bound on the commutator growth. Since the word length of iterates of $\gamma\in\Gamma^2$ does \textit{not} grow linearly, we must deal with this easy case separately.

\begin{lemma}\label{l:M'}
For every $\gamma\in\Gamma-\Gamma^2$ there is $M'\ge 1$ so that for any $\epsilon>0$ there is $N$ so that for all $n\ge N$

\[
  m(\abs{\pi_{com}\circ\alpha(\gamma^n,x)}_\Lambda>M'n)<\epsilon.
\]
\end{lemma}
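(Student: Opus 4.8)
The plan is to combine Proposition \ref{p: z growth} with the two elementary geometric facts already established, namely Lemma \ref{l: compare com} (projection to commutator coordinates costs only a multiplicative constant in word norm) and Lemma \ref{l:n grows} (for $\gamma \notin \Gamma^2$ the word length $\abs{\gamma^n}_\Gamma$ grows at least linearly in $n$). First I would apply Proposition \ref{p: z growth} to the subadditive cocycle $c(\gamma,x) = \abs{\alpha(\gamma,x)}_\Lambda$; that this is subadditive is immediate from the cocycle identity \eqref{eq:cocycles} together with the subadditivity of the word norm $\abs{\cdot}_\Lambda$. This produces a constant $M \ge 1$ such that for every $\epsilon > 0$ there is $C(\epsilon)$ with $\abs{\gamma'}_\Gamma \ge C(\epsilon) \implies m\bigl(\abs{\alpha(\gamma',x)}_\Lambda \ge M\abs{\gamma'}_\Gamma\bigr) < \epsilon$.

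Next I would fix $\gamma \in \Gamma - \Gamma^2$ and specialize to the iterates $\gamma' = \gamma^n$. By Lemma \ref{l:n grows} there is $l > 0$ with $\abs{\gamma^n}_\Gamma \ge ln$ for all $n$; in particular $\abs{\gamma^n}_\Gamma \to \infty$, so there is $N$ (depending on $\gamma$ and $\epsilon$) with $\abs{\gamma^n}_\Gamma \ge C(\epsilon)$ for all $n \ge N$. For such $n$ we get $m\bigl(\abs{\alpha(\gamma^n,x)}_\Lambda \ge M\abs{\gamma^n}_\Gamma\bigr) < \epsilon$. On the complementary event, $\abs{\alpha(\gamma^n,x)}_\Lambda < M\abs{\gamma^n}_\Gamma$, and subadditivity of the word norm (as in the displayed inequality in \S\ref{ss:ime}) gives $\abs{\gamma^n}_\Gamma \le n\abs{\gamma}_\Gamma$, so $\abs{\alpha(\gamma^n,x)}_\Lambda < M\abs{\gamma}_\Gamma \cdot n$. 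Finally, apply Lemma \ref{l: compare com} in $\Lambda$: there is $C' > 0$ with $\abs{\pi_{com} \circ \alpha(\gamma^n,x)}_\Lambda \le C'^{-1} \abs{\alpha(\gamma^n,x)}_\Lambda$ (absorbing the additive constant using that on the relevant event the norm is already at least $1$, or simply enlarging the multiplicative constant since $\Lambda$ is discrete). Setting $M' := C'^{-1} M \abs{\gamma}_\Gamma$ (and rounding up so $M' \ge 1$) yields, for $n \ge N$,
\[
m\bigl(\abs{\pi_{com}\circ\alpha(\gamma^n,x)}_\Lambda > M'n\bigr) \le m\bigl(\abs{\alpha(\gamma^n,x)}_\Lambda \ge M\abs{\gamma^n}_\Gamma\bigr) < \epsilon,
\]
which is exactly the claim.

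The only mild subtlety — and the one point I would be careful about — is the interplay between the multiplicative-versus-additive form of the word-norm comparisons (Lemma \ref{l: compare com} and the Guivarc'h-type estimates are stated with additive slack) and the fact that Proposition \ref{p: z growth} is a clean multiplicative bound. Since $\Lambda$ is discrete and we only ever evaluate on iterates $\gamma^n$ with $\abs{\gamma^n}_\Gamma \ge ln \to \infty$, all additive constants can be absorbed into the multiplicative constant $M'$ for $n$ large, so this causes no real difficulty; it just means $N$ and $M'$ must be chosen in that order. There is no genuine obstacle here — this lemma is a straightforward packaging of the preceding results, which is presumably why the text calls the deduction "easy."
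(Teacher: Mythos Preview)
Your proposal is correct and follows essentially the same route as the paper: apply Proposition~\ref{p: z growth} to the subadditive cocycle $c(\gamma,x)=\abs{\alpha(\gamma,x)}_\Lambda$, use Lemma~\ref{l:n grows} to ensure $\abs{\gamma^n}_\Gamma\ge C(\epsilon)$ for large $n$, bound $\abs{\gamma^n}_\Gamma\le n\abs{\gamma}_\Gamma$ by subadditivity, and finish with Lemma~\ref{l: compare com}. Your discussion of absorbing additive constants is more careful than the paper's, but the argument is otherwise identical.
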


\begin{proof}
We apply Proposition \ref{p: z growth} to the subadditive cocycle $c:\Gamma\times X\to [0,\infty)$ defined by $c(\gamma,x)=\abs{\alpha(\gamma,x)}_\Lambda$. We obtain $M$ and set $M'=M\abs{\gamma}_\Gamma/k$ where $k$ is from Lemma \ref{l: compare com}. Fix $\epsilon>0$. Then there is $C$ so that

\[
\abs{\gamma}_\Gamma\ge C \quad \implies \quad m(\abs{\alpha(\gamma,x)}_\Lambda\ge M\abs{\gamma}_\Gamma)<\epsilon.
\]

Set $N=C/l$ where $l$ is from Lemma \ref{l:n grows}. Then since $\abs{\gamma^n}_\Gamma\le n\abs{\gamma}_\Gamma$,

\[
n\ge N \implies \abs{\gamma^n}_\Gamma \ge C \implies \quad m(\abs{\alpha(\gamma^n,x)}_\Lambda\ge Mn\abs{\gamma}_\Gamma)<\epsilon.
\]

Finally, by Lemma \ref{l: compare com}

\[
n\ge N \implies m(\abs{\pi_{com}\alpha(\gamma^n,x)}_\Lambda\ge M'n)<\epsilon.
\]

\end{proof}

The proof of Lemma \ref{l:comm lambda} is easy in case $\gamma\in\Gamma^2$.
\begin{lemma}
If $\gamma\in\Gamma^2$ then 
\[
\abs{\pi_{com} \circ\alpha(\gamma^n,x)}_\Lambda=o(n) \qquad \text{in probability}.
\]
\end{lemma}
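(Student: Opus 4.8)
The plan is to exploit the fact that for $\gamma \in \Gamma^2$ the abelianization of $\alpha(\gamma^n,x)$ is already forced to be $o(n)$, and then to leverage the faster-than-linear growth of the commutator direction to bootstrap. First I would observe that $\alpha_{ab}(\gamma,\cdot) = \pi_{ab}\circ\alpha(\gamma,\cdot)$ is a cocycle with values in $\bbR^d$, and since $\gamma \in \Gamma^2 = [\Gamma,\Gamma]$ we may write $\gamma$ as a product of commutators $\prod_j [\sigma_j,\tau_j]$; applying the cocycle identity and the fact that $\bbR^d$ is abelian, $\overline{\alpha_{ab}}$ restricted to $\Gamma^2$ is the zero homomorphism, hence $\overline{\alpha_{ab}}(\gamma) = 0$. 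By Lemma~\ref{l:abel Lambda}, $\frac{1}{n}\alpha_{ab}(\gamma^n,x) \to 0$ for a.e. $x$, so in particular $\abs{\pi_{ab}\circ\alpha(\gamma^n,x)}_\Lambda = o(n)$ in probability.

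Next I would obtain a crude $O(n)$ bound on the \emph{full} word norm $\abs{\alpha(\gamma^n,x)}_\Lambda$ valid for \emph{any} $\gamma\in\Gamma$, including $\gamma\in\Gamma^2$: indeed $c(\gamma,x) := \abs{\alpha(\gamma,x)}_\Lambda$ is a subadditive cocycle, and the integrability estimate $\|\abs{\alpha(\gamma,\cdot)}_\Lambda\|_1 \le \abs{\gamma}_\Gamma\cdot\max_{s\in S}\|\abs{\alpha(s,\cdot)}_\Lambda\|_1$ together with $\abs{\gamma^n}_\Gamma \le n\abs{\gamma}_\Gamma$ and Markov's inequality gives $m(\abs{\alpha(\gamma^n,x)}_\Lambda > Mn\abs{\gamma}_\Gamma) < \epsilon$ for a suitable $M$ and all large $n$. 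Hence $\abs{\alpha(\gamma^n,x)}_\Lambda = O(n)$ in probability, and by Lemma~\ref{l: compare com} also $\abs{\pi_{com}\circ\alpha(\gamma^n,x)}_\Lambda = O(n)$ in probability.

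Now I would run the same telescoping argument sketched in the discussion preceding Lemma~\ref{l:M'}: using the cocycle equation write $\alpha(\gamma^{nk},x) = \alpha(\gamma^n,x_{k-1})\cdots\alpha(\gamma^n,x_1)\alpha(\gamma^n,x)$ with $x_i = \gamma^{ni}\cdot x$. By the $O(n)$ bound each factor has word norm $O(n)$ whp, and by the previous paragraph each factor has $\pi_{ab}$-part of size $o(n)$ whp; applying Lemma~\ref{l:commutator growth} (or its $\abs{\cdot}_\Lambda$-analog via the Guivarc'h Lemma) repeatedly, the commutator coordinates of a product of $k$ such factors, each with abelianization $o(n)$ and total norm $O(n)$, still only add up in the "leading" abelian-abelian contributions, which vanish, so $\abs{\pi_{com}\circ\alpha(\gamma^{nk},x)}_\Lambda \le k\cdot o(n) + (\text{cross terms of size } o(n^2)/n\text{-order})$. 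More precisely: by Baker--Campbell--Hausdorff applied in $H$, the commutator coordinates of $\alpha(\gamma^{nk},x)$ equal $\sum_i \pi_{com}\alpha(\gamma^n,x_i)$ plus brackets, each such bracket involving at least one factor whose relevant coordinate comes from an abelianization of size $o(n)$; choosing $k$ large and then $n\to\infty$ forces $\frac{1}{nk}\abs{\pi_{com}\circ\alpha(\gamma^{nk},x)}_\Lambda$ below any prescribed $\epsilon$ whp.

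The main obstacle I expect is making the telescoping estimate uniform: the number of factors $k$ and the scale $n$ interact, and one must ensure the "high probability" sets for the $k$ translates $x_i$ can be intersected with controlled loss (this uses measure-preservation: each $x_i$ has the same distribution as $x$, so a union bound over $k$ translates costs $k\epsilon$, which is then absorbed by first fixing $k$, then sending $n\to\infty$, then $\epsilon\to 0$, then $k\to\infty$ in the right order). The abelian input $\overline{\alpha_{ab}}(\gamma)=0$ is what makes this case strictly easier than Lemma~\ref{l:M'}, since there is no linear drift to subtract off.
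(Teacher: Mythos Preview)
Your approach is far more elaborate than the paper's, and you have missed the key simplification that makes this case easy. The paper's proof is essentially one line: for $\gamma\in\Gamma^2$ every nonzero coordinate of $\log\gamma$ has degree $\ge 2$, so $\gamma^n=\exp(n\log\gamma)$ satisfies $\abs{\gamma^n}_\Gamma\le c\sqrt{n}$ (Lemmas~\ref{F:degree} and~\ref{l:o(n)}). Markov's inequality applied to $\|\abs{\alpha(\gamma^n,\cdot)}_\Lambda\|_1\le\kappa\abs{\gamma^n}_\Gamma\le\kappa c\sqrt{n}$ then gives $\abs{\alpha(\gamma^n,x)}_\Lambda=o(n)$ in probability \emph{for the full word norm}, and Lemma~\ref{l: compare com} passes this to $\pi_{com}$. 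No telescoping, no BCH, no ergodic input about $\overline{\alpha_{ab}}$ is needed; the point is precisely that the $\Gamma^2$ case is trivial because the \emph{source} iterate already grows sublinearly. Your proposal instead reruns the hard bootstrap that the paper reserves for $\gamma\notin\Gamma^2$.

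Your telescoping strategy with $v=\overline{\alpha_{ab}}(\gamma)=0$ could in principle be pushed through, but the BCH step as you wrote it contains an error: the assertion that ``each such bracket involv[es] at least one factor whose relevant coordinate comes from an abelianization of size $o(n)$'' is false. In groups of step $\ge 3$, brackets of two commutator-coordinate terms (for instance $[a_{i,d+1}X_{d+1},a_{j,d+2}X_{d+2}]$) contribute to higher coordinates with no abelian factor at all. Those terms \emph{are} controllable---each entry has degree $\ge 2$, so such brackets have at most $d_u/2$ factors and there are only $O(k^{d_u/2})$ of them, giving a word-norm contribution $O(k^{1/2}n)=o(nk)$ for $k$ large---but this requires the case analysis of Lemma~\ref{l:t}, not the blanket claim you made. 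So your route is salvageable, yet strictly harder than the paper's and with a gap to patch.
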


\begin{proof}
By Markov's inequality there is $\kappa=\max_{s\in S} \|\abs{\alpha(s,\cdot)}_\Lambda\|_1$ so that for every $M\in\bbN$

\[
m(\abs{\alpha(\gamma^n,x)}_\Lambda>M\kappa\abs{\gamma^n}_\Gamma)<1/M.
\]

For $\gamma\in\Gamma^2$ there is a constant $c>0$ so that for all $n\in\bbN$ we have $\abs{\gamma^n}_\Gamma\le c\sqrt{n}$ (Lemma \ref{F:degree} and Lemma \ref{l:o(n)}). Thus for such $\gamma$ we have $\abs{\alpha(\gamma^n,x)}_\Lambda=o(n)$ whp. Lemma \ref{l: compare com} completes the proof.
\end{proof}

We need one more lemma before we can prove Lemma \ref{l:comm lambda}. Let us illustrate the idea behind the lemma through the example of the Heisenberg group. Recall that in logarithmic coordinates, the multiplication in the Heisenberg group is 
\[
(x,y,z)(x',y',z')=(x+x',y+y',z+z'+1/2(xy'-x'y)).
\]

The non-linear growth in the $z$-coordinate is given by the area enclosed by the triangle formed by $(x,y),(x+x',y+y')$ and $(0,0)$. So, if a pair of elements have very similar abelianizations, the $z$-coordinate of their product is approximately the sum $z+z'$. Now suppose we have $k$ elements with uniformly controlled $z$-coordinates and very similar abelianizations. Then the $z$-coordinate of their product grows approximately linearly. Thus the $z$-coordinate is $o(k)$ since the $z$-coordinate `should' grow quadratically. The following lemma generalizes this idea to general finitely generated torsion-free nilpotent groups.

We define the projection on to the first $t$ commutator coordinates

\[
\pi_t:\Lambda\to\Lambda \qquad \pi_t(a_1,\ldots ,a_m)=(0,\ldots ,0,a_{d+1},\ldots ,a_t,0,\ldots 0).
\]

Let $d_1$ be the $l^1$ metric on $\bbR^d$ and $\abs{\cdot}_1$ be the $l^1$ norm, so that $\abs{(x_1,\ldots ,x_d)}_1=\abs{x_1}+\cdots +\abs{x_d}$. 

\begin{lemma} \label{l:t}
Fix $0<M<\infty$ and $v\in\bbR^d$. For each $d \le t < m$ for all $\delta>0$ there exists $K\in\bbN$ and $\delta'>0$ so that for all $k\ge K$ and $\eta\ge 1$, whenever there exist $\lambda_1,\ldots ,\lambda_k\in\Lambda$ such that 

\begin{align} \label{eq:close v}
d_{1}(\pi_{ab}\lambda_i,v)&<\eta\delta'\abs{v}_1 \\
\abs{\pi_t\lambda_i}_\Lambda&<\eta\delta' \label{eq:t small} \\
\abs{\pi_{com}\lambda_i}_\Lambda&<\eta M\ \label{eq:t+1 small}
\end{align}

then 

\[
\abs{\pi_{t+1}\lambda_1\cdots\lambda_k}_\Lambda<\eta k\delta.
\]

%In particular, taking $\eta=1$, by induction, given $M$,$v\in\bbR^d$ and $\delta>0$ there is $\delta'>0$ and $k\in\bbN$ so that whenever $\gamma_1,\ldots ,\gamma_k$ satisfy \eqref{eq:close v} then $\abs{\pi_{com}\gamma_1\cdots\gamma_k}\le k\delta$.
\end{lemma}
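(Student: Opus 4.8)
The plan is to work coordinate by coordinate in the $(t+1)$-st commutator slot and track how the BCH contributions accumulate over the product $\lambda_1\cdots\lambda_k$. First I would observe, via Baker--Campbell--Hausdorff applied in $\mfh$ (using Lemma~\ref{l:o(n)} to pass between word norms and coordinate sizes), that the $(t+1)$-st coordinate of $\lambda_1\cdots\lambda_k$ is a sum of the individual $(t+1)$-st coordinates of the $\lambda_i$ plus ``cross terms'' of the form $c[\,a_{i,p}X_p,\,[\dots,\,a_{j,q}X_q]\,]$ that mix coordinates from distinct factors $\lambda_{i_1},\dots,\lambda_{i_l}$. The diagonal contribution is $\sum_i (\pi_{t+1}\lambda_i)$-coordinate, and since each is at most $\abs{\pi_{t+1}\lambda_i}_\Lambda \le \eta M$ by \eqref{eq:t+1 small} --- wait, that only gives $O(\eta k M)$, not $o(\eta k)$; so the real work is to exploit that the \emph{abelian} parts of the $\lambda_i$ are nearly equal to a fixed $v$, which makes the product behave almost like a one-parameter family and forces the genuinely quadratic coordinates to grow sub-linearly in $k$.

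The key device is an induction on $t$: I would assume the statement for all smaller values (with $\pi_t$ in place of $\pi_{t+1}$) as provided by the hypotheses \eqref{eq:t small}, and then compare the product $\lambda_1\cdots\lambda_k$ to the ``linearized'' product in which each $\lambda_i$ is replaced by the group element whose abelian part is exactly $\delta'$-close to $v$ in a controlled way. Concretely, partition $\{1,\dots,k\}$ into $\sqrt{k}$ blocks of size $\sqrt{k}$; on each block the product of the $\sqrt{k}$ nearly-equal factors has abelian part $\approx \sqrt{k}\,v$ and --- by the degree count of Lemma~\ref{F:degree} together with hypotheses \eqref{eq:close v}--\eqref{eq:t+1 small} --- commutator coordinates up to degree $d_{t+1}$ that are bounded by a constant times $\eta(\sqrt{k})\delta' + \eta\sqrt{k}\cdot(\text{block cross terms})$. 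The block cross terms in the $(t+1)$-coordinate are sums of brackets each involving at least two factors from the block, hence at least two abelian-type inputs close to $v$ and at most total degree $d_{t+1}$, so each such term is $O(\eta^2 (\sqrt k)^{?} \delta')$; choosing $\delta'$ small (depending on $M$, $v$, $t$, $\delta$) and $K$ large makes the per-block error at most $\eta\sqrt{k}\cdot(\delta/2)$ say. Then one reassembles the $\sqrt k$ blocks: the product of the $\sqrt k$ block-elements again has controlled abelian part $\approx k v$ and $(t+1)$-coordinate bounded by $\sqrt k$ times a per-block bound plus outer cross terms that are again $o(\eta k)$ by the same degree bookkeeping, yielding $\abs{\pi_{t+1}\lambda_1\cdots\lambda_k}_\Lambda < \eta k \delta$.

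The main obstacle is keeping the dependence on the scaling parameter $\eta$ honestly homogeneous: every estimate must come out with the correct power of $\eta$ (degree-$d_{t+1}$ monomials in coordinates of size $O(\eta)$ could a priori produce $\eta^{d_{t+1}}$, which is fatal if $\eta\to\infty$), so one has to use that at least one input in every relevant bracket is a \emph{commutator-type} coordinate bounded by $\eta\delta'$ rather than $\eta M$, and that the number of abelian-type inputs is bounded by $d_{t+1}$, to conclude the monomial is $O(\eta^2 \delta')$ or $O(\eta \delta' k^{c})$ with $c<1$ after the block partition --- in all cases linear in $\eta$ after absorbing a factor $\eta$ into one commutator coordinate. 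Pinning down that the block-partition exponents genuinely land below $k$ (i.e.\ that the cross terms at degree $d_{t+1}\ge 2$ contribute $k^{1 - 1/d_{t+1}}$ or better, not $k$) is the technical heart, and it is exactly the place where the hypothesis that the abelian parts are $\eta\delta'\abs v_1$-close (not merely $O(\eta)$) is indispensable.
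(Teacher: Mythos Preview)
Your proposal has the right instinct---the product should behave like a one-parameter family because the abelian parts are nearly equal---but two genuine gaps prevent it from going through.

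First, there is a confusion between coordinate size and word norm. By Lemma~\ref{F:degree} the $(t+1)$-st coordinate of an element of $\Lambda$-norm $R$ is of order $R^{d_{t+1}}$, so the target $\abs{\pi_{t+1}\lambda_1\cdots\lambda_k}_\Lambda<\eta k\delta$ means the $(t+1)$-coordinate itself must be at most $c(\eta k\delta)^{d_{t+1}}$. Degree-$d_{t+1}$ monomials in inputs of size $O(\eta)$ therefore produce exactly the allowed power $\eta^{d_{t+1}}$; your ``$\eta$-homogeneity'' worry, and the attempt to force every estimate to be linear in $\eta$, are misplaced. In particular the diagonal sum $\sum_i a_{i,t+1}$ is at most $k(\eta M)^{d_{t+1}}$ by \eqref{eq:t+1 small} and Lemma~\ref{F:degree}, and since $d_{t+1}\ge 2$ this is $\le(\eta k\delta)^{d_{t+1}}$ once $k\ge(M/\delta)^2$. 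So the diagonal term is handled simply by choosing $K$ large; no blocking is needed.

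Second, the $\sqrt{k}$-block recursion is circular: to control the $(t+1)$-coordinate of each block product, and then of the product of block products, you need the very lemma you are proving at scale $\sqrt{k}$, with no genuine base case short of a direct analysis. The paper does that direct analysis: expand the full product by Baker--Campbell--Hausdorff and sort the iterated brackets contributing to $X_{t+1}$ into three types. Terms involving some $X_j$ with $d<j\le t$ carry a factor $\le(\eta\delta')^{d_j}$ from \eqref{eq:t small}, and summed over the $O(k^{d_{t+1}})$ index choices give $O\bigl(\delta'(\eta k)^{d_{t+1}}\bigr)$, killed by taking $\delta'$ small. The remaining terms have all inputs from the abelian layer $X_1,\dots,X_d$; here the missing idea is to \emph{pair} each bracket with the one obtained by swapping its two innermost entries. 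Antisymmetry leaves a factor $a_{i,s}a_{i',u}-a_{i,u}a_{i',s}$, and \eqref{eq:close v} says exactly that all abelian coordinates lie within $\eta\delta'\abs{v}_1$ of a common vector, so this $2\times2$ minor is $O(\eta^2\delta'\abs{v}_1)$; the total over all index choices is again $O\bigl(\delta'\abs{v}_1(\eta k)^{d_{t+1}}\bigr)$. This antisymmetry pairing is precisely how hypothesis \eqref{eq:close v} is used, and it replaces your block scheme entirely. (A side remark: there is no induction on $t$ inside this lemma; that induction lives in the proof of Lemma~\ref{l:comm lambda}, which invokes the present lemma once at each step.)
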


\begin{proof}[Proof of Lemma \ref{l:comm lambda}]
Fix $\gamma\in\Gamma-\Gamma^2$. We obtain $M$ as in Lemma \ref{l:M'} and set $v=\overline{\alpha_{ab}}(\gamma)$. We prove by induction that for every $d\le t \le m$

\[
\abs{\pi_t \alpha(\gamma^n,x)}_\Lambda=o(n) \qquad \text{in probability}.
\]

For $t=d$ there is nothing to show. Suppose the result is known for $t$. Fix $\epsilon>0$ and $\delta>0$.  We apply Lemma \ref{l:t} with the given $\delta$, $M$ and $v$ to obtain $k=K$ and $\delta'$. Let $N$ be as in Lemma \ref{l:M'} applied to $\epsilon/k$, so that for all $\eta \ge N$ we have with probability at least $1-\epsilon/k$

\[
\abs{\pi_{com}\alpha(\gamma^\eta,x)}_\Lambda<\eta M.
\]

By taking $N$ larger if necessary, applying the inductive hypothesis to $\delta'/3$ and $\epsilon/k$ we obtain $N$ so that for all $\eta \ge N$ with probability at least $1-\epsilon/k$ we have

\[
\abs{\pi_t\alpha(\gamma^\eta,x)}_\Lambda<\eta\delta'/3.
\]

By taking $N$ larger again if necessary, by Lemma \ref{l:abel Lambda} for all $\eta \ge N$ with probability at least $1-\epsilon/k$ 

\[
d_{1}(\pi_{ab}\alpha(\gamma^{\eta},x),\eta v)<\eta\delta'\abs{v}/3.
\]

Since the $\Gamma$ action on $(X,\mu)$ is measure preserving, the previous three statements remain true if we replace any instance of $x$ with $g x$ for any $g\in\Gamma$. 

Finally, let $N$ be larger if necessary so that $k\le \delta'N$. Now let $p \ge kN$. Write $p=\eta k+r$ where $0\le r <k$ and $\eta\ge N$. Using the cocycle equation

\[
\alpha(\gamma^{k\eta+r},x)=\alpha(\gamma^\eta,x)\alpha(\gamma^{\eta},\gamma^{\eta} x)\cdots \alpha(\gamma^{\eta},\gamma^{(k-2)\eta},x)\alpha(\gamma^{\eta+r},\gamma^{(k-1)\eta},x).
\]

Since $\eta,\eta+r\ge N$, with probability at least $1-3\epsilon$ we have simultaneously for all $0\le i \le k-2$

\begin{align*}
 \abs{\pi_{com}\alpha(\gamma^\eta,\gamma^{i\eta}x)}_\Lambda&<\eta M \\
 \abs{\pi_t\alpha(\gamma^{\eta},\gamma^{i\eta}x)}_\Lambda&<\eta\delta'/3 \\
d_{1}(\pi_{ab}\alpha(\gamma^{\eta},\gamma^{i\eta}x),\eta v)&<\eta\delta'\abs{v}_1/3 
\end{align*}

and 

\begin{align*}
 \abs{\pi_{com}\alpha(\gamma^{\eta+r},\gamma^{(k-1)\eta}x)}_\Lambda&<(\eta+r) M \\
 \abs{\pi_t\alpha(\gamma^{\eta+r},\gamma^{(k-1)\eta}x)}_\Lambda&<(\eta+r)\delta'/3\\
d_{1}(\pi_{ab}\alpha(\gamma^{\eta+r},\gamma^{(k-1)\eta}x),(\eta+r) v)&<(\eta+r)\delta'\abs{v}_1/3.
\end{align*}

Since $r\le \delta'\eta$ the final three inequalities imply

\begin{align*}
 \abs{\pi_{com}\alpha(\gamma^{\eta+r},\gamma^{(k-1)\eta}x)}_\Lambda&<2\eta M \\
 \abs{\pi_t\alpha(\gamma^{\eta+r},\gamma^{(k-1)\eta}x)}_\Lambda&<\eta\delta' \\
d_{1}(\pi_{ab}\alpha(\gamma^{\eta+r},\gamma^{(k-1)\eta}x),\eta v)&<\delta'\abs{v}_1 
\end{align*}

where for the final inequality we have used the triangle inequality with intermediate term $(\eta+r)v$.

Therefore with probability at least $1-3\epsilon$ we apply Lemma \ref{l:t} and obtain

\[
\abs{\pi_{t+1}\alpha(\gamma^p,x)}_\Lambda<k\eta\delta<p\delta.
\]
\end{proof}

\begin{proof}[Proof of Lemma \ref{l:t}]
Fix $0<M<\infty$, $v\in\bbR^d$, $d\le t<m$, $\delta>0$ and $1>\delta'>0$. We will show in the proof how to choose $\delta'$ as a function of $\delta,\abs{v}_1,t$. Choose $K$ large so that $M/\sqrt{K} \le \delta^2$, and fix $k\ge K$ and $\eta\ge 1$. Suppose we have $\lambda_1,\ldots ,\lambda_k$ satisfying conditions \eqref{eq:close v} \eqref{eq:t small} \eqref{eq:t+1 small}.  Let us denote $\lambda_i=(a_{i,1},a_{i,2},\ldots ,a_{i,m})$  for each $1\le i \le k$, keeping in mind that only $a_{i,1}, \ldots ,a_{i,t+1}$ are relevant. Throughout this proof $c$ will denote an ever-changing constant this is independent of $\delta$, $\delta'$ and $\eta$.

We are concerned with the absolute value of the $t+1$ coordinate of the product $\lambda_1\cdots\lambda_k$. By Lemma \ref{F:degree} it suffices to show that the absolute value of this coordinate is at most $c(\eta k\delta)^{d(t+1)}$. The estimate we seek will follow from the Baker-Campbell-Hausdorff equation and the following constraints on the $a_{i,j}$ implied by conditions  \eqref{eq:close v}, \eqref{eq:t small} and \eqref{eq:t+1 small}:

\begin{align} \label{eq:aij}
\abs{a_{i,j}-a_{i',j}}\le c\eta\delta' \abs{v}_1& \qquad &\forall 1\le i,i'\le k \qquad &\forall 1\le j \le d \\
\abs{a_{i,j}}\le c(\eta\delta')^{d(j)}& \qquad &\forall 1\le i \le k \qquad &\forall d< j \le t \label{eq:aij 2}\\
\abs{a_{i,t+1}}\le c(\eta M)^{d(t+1)} & \qquad &\forall 1\le i \le k \qquad &{} \label{eq:aij 3} \\
\abs{a_{i,j}}\le c\eta^{d(j)} & \qquad &\forall 1\le i \le k \qquad & \forall 1\le j\le t \label{eq:aij 4}.
\end{align}

Indeed, setting $v=(v_1,\ldots ,v_d)$, from \eqref{eq:close v} we have $\sum_{j=1}^d \abs{a_{i,j}-v_j} \le \eta\delta'\abs{v}_1$ which implies in particular $\abs{a_{i,j}-v_j}\le \eta\delta'\abs{v}_1$ for all $i$, giving \eqref{eq:aij}. Combining \eqref{eq:t small}, Lemma \ref{l:asymp coords} and Lemma \ref{F:degree} we immediately arrive at \eqref{eq:aij 2}. Similarly combining \eqref{eq:t+1 small}, Lemma \ref{l:asymp coords} and Lemma \ref{F:degree} we arrive at \eqref{eq:aij 3}. It only remains to prove \eqref{eq:aij 4} in the case $1\le j \le d$, which follows from $\abs{a_{i,j}-v_j}\le \eta\delta'\abs{v}_1$ above and $\abs{v_j}\le \abs{v}_1$.

By the Baker-Campbell-Hausdorff equation we can express the product $\lambda_1\cdots \lambda_k$ as a sum of terms of the form 

\begin{equation}\label{eq:one term}
c[\lambda_{i_1}, \ldots ,[\lambda_{i_{l-1}},\lambda_{i_l}],\ldots ]
\end{equation}

where $i_j\in \left \{ {1,\ldots ,k}\right \}$ for each $1\le j\le l\le m$. We emphasize that it is possible that the indices are repeated, i.e. that $i_j=i_{j'}$ while $j\neq j'$. We are only interested in the brackets that contribute to the coefficient of $X_{t+1}$.  We replace each $\lambda_i$ with $\sum_{j=1}^m a_{i,j}X_j$ in each of the summands \eqref{eq:one term} above. Using linearity of the Lie bracket, the result is a sum of terms of the form

\begin{equation}\label{eq:one term aij}
c[a_{i_1,j(i_1)}X_{j(i_1)},\ldots ,[a_{i_{l-1},j(i_{l-1})}X_{j(i_{l-1})},a_{i_l,j(i_l)}X_{j(i_l)}]\ldots ] 
\end{equation}

where for each $i_r$ we have chosen $j(i_r)\in \left \{ {1,\ldots ,{t+1}}\right \}$. By Lemma \ref{F:degree}, we have that 

\begin{equation}\label{eq:j constraint}
\sum_{r=1}^l d_{j(i_r)} \le d_{t+1}
\end{equation}

so that in particular $l\le t+1$. We will show that each such term is small by analyzing the possibilities for the choices $j(i_r)$ above. We consider three cases.

For the first case we consider all terms with $j(i_r)=t+1$ for some $r$. Note that in this case, in view of \eqref{eq:j constraint} in fact \eqref{eq:one term aij} becomes 

\[
ca_{i_1,t+1}X_{t+1}.
\]

In view of \eqref{eq:aij 3}, summing these over all $1\le i_1 \le k$, the total contribution to the $t+1$ term from this case is, in absolute value, at most

\[
ck(\eta M)^{d_{t+1}}\le c n^{d_{t+1}} \delta^{d_{t+1}}k^{1+d_{t+1}/2}
\]

by our choice of $k$. This suffices since we may assume $d_{t+1}\ge 2$.

For the second case, we consider all terms in which at least one of the $j(i_r) \in \left \{ {d,\ldots ,t+1}\right \}$. By linearity we pull out all of the constants $a_{i,j}$ and consider the size of their product. By our assumption and \eqref{eq:aij 2} one of the terms is at most $c(\eta\delta')^{d_{j(i_r)}}$ and by \eqref{eq:aij 4} the rest of the terms are at most $c\eta^{d_{j(i_r)}}$. Therefore their product is at most 
\[
c\delta' \eta^{\sum_{r=1}^l d_{j(i_r)}}\le c\delta'\eta^{d_{t+1}}.
\]

Since there are finitely many such terms independent of $\delta'$, by taking $\delta'$ small as a function of $\delta,c,t$ and the number of such terms, the total contribution to the $t+1$ coordinate of the product $\lambda_1\cdots \lambda_k$ from terms of the second type is as desired.

For the third and final case we group each term into pairs and use antisymmetry, as follows.  We may assume all terms $i(j_r)\in \left \{ {1,\ldots ,d} \right \}$. In particular the inner most term $[a_{i_{l-1},j(i_{l-1})}X_{j(i_{l-1})},a_{i_l,j(i_l)}X_{j(i_l)}]$ has $j(i_{l-1})=s, j(i_l)=t$ for some $s,t \in [1,\ldots d]$. We pair the terms for which $j(i_{l-1})=s, j(i_l)=t$ with that for which $j(i_{l-1})=t, j(i_l)=s$, and all other $j(i_r)$ equal. By anti-symmetry of the bracket, the sum of these two terms is 

\begin{align*}
[a_{i_1,j(i_1)}X_{j(i_1)},\ldots ,[a_{i_{l-1},j(i_{l-1})}X_{j(i_{l-1})},a_{i_l,j(i_l)}X_{j(i_l)}]\ldots ] \\
+[a_{i_1,j(i_1)}X_{j(i_1)},\ldots ,[a_{i_{l-1},j(i_l)}X_{j(i_l)},a_{i_l,j(i_{l-1})}X_{j(i_{l-1})}]\ldots ] \\
=[a_{i_1,j(i_1)}X_{j(i_1)},\ldots ,(a_{i_{l-1},j(i_{l-1})}a_{i_l,j(i_l)}-a_{i_{l-1},j(i_l)}a_{i_l,j(i_{l-1})})[X_{j(i_{l-1})},X_{j(i_l)}]\ldots ]
\end{align*}

Pulling the constants out and considering the absolute value of the coefficient, we are concerned with the absolute value of 

\begin{equation}\label{eq:area}
a_{i_1,j(i_1)}\cdots a_{i_{l-2},j(i_{l-2})}(a_{i_{l-1},j(i_{l-1})}a_{i_l,j(i_l)}-a_{i_{l-1},j(i_l)}a_{i_l,j(i_{l-1})}).
\end{equation}

By properties \eqref{eq:aij} and \eqref{eq:aij 4} and the triangle inequality we have

\begin{align*}
\abs{(a_{i_{l-1},j(i_{l-1})}a_{i_l,j(i_l)}-a_{i_{l-1},j(i_l)}a_{i_l,j(i_{l-1})})}&\le \abs{a_{i_{l-1},j(i_{l-1})}a_{i_l,j(i_l)}-a_{i_l,j(i_l)}a_{i_l,j(i_{l-1})}} \\
&+\abs{a_{i_l,j(i_l)}a_{i_l,j(i_{l-1})}-a_{i_{l-1},j(i_l)}a_{i_l,j(i_{l-1})}}  \\
&\le\abs{a_{i_l,j(i_l)}}\abs{a_{i_{l-1},j(i_{l-1})}-a_{i_l,j(i_{l-1})}} \\
&+\abs{a_{i_l,j(i_{l-1})}}\abs{a_{i_l,j(i_l)}-a_{i_{l-1},j(i_l)}} \\
&\le c\eta c\eta\delta'\abs{v}_1+c\eta c\eta\delta'\abs{v}_1=c\eta^2\delta'\abs{v}_1.
\end{align*}

Now by \eqref{eq:aij 4} each of the other terms in the product \eqref{eq:area} has absolute value at most $c\eta$. Putting this together with the preceding and noting that $l\le d_{t+1}$, the absolute value of \eqref{eq:area} is at most $c\eta^{d_{t+1}}\delta'\abs{v}_1$. Since there are a finite number of such terms independent of $\delta'$, by taking $\delta'$ small as a function of $\delta, c,\abs{v}_1$, the total contribution to the absolute value of the $t+1$ coordinate of $\lambda_1\cdots \lambda_k$ from terms from the third case is as desired. This finishes the proof.

\end{proof}

%Finally to prove Lemma \ref{l:main com} we need to transfer the result of Lemma \ref{l:comm lambda} to a statement about the size of the commutator in $H_\infty$. To do this we use Lemma \ref{l: rel com size} which says that there is $C>0$ so that for all $\lambda\in\Lambda$ 
%
%\[
%\abs{L\circ\pi_{com}^\Lambda(\lambda)}_\infty \le C\abs{\pi_{com}^\Lambda(\lambda)}_S.
%\]

%\begin{proof}[Proof of Lemma \ref{l:main com}]
%For all $\delta>0$
%
%\[
%\frac{1}{n}\abs{L\pi_{com}^\Lambda\alpha(\gamma^n,x)}_\infty>\delta \quad \implies \quad \frac{1}{n}\abs{\pi_{com}^\Lambda\alpha(\gamma^n,x)}_\infty>\delta/C. 
%\]

%By Lemma \ref{l:comm lambda} for $n$ large the right hand side occurs with probability at most $\epsilon$. 
%\end{proof}

%%%SUBSECTION
\subsection{Proof of Proposition \ref{p: iterates H_infty}}
Finally we can combine Lemmas \ref{l:oO(n)}, \ref{l:abel Lambda} and \ref{l:comm lambda} to prove Proposition \ref{p: iterates H_infty}.

\begin{proof}[Proof of Proposition \ref{p: iterates H_infty}]
Fix $\gamma\in\Gamma$. Chow's Theorem and Lemma \ref{l:abel Lambda} imply

\begin{equation}\label{eq:2nd sum}
d_\infty(\alpha_{ab}(\gamma^n,x), n\overline{\alpha_{ab}}(\gamma))=o(n) \qquad \text{in probability}
\end{equation}

which implies in particular that 

\begin{equation}\label{eq:s to h}
\abs{\alpha_{ab}(\gamma^n,x)}_\infty=O(n) \qquad \text{in probability}.
\end{equation}

 %Therefore Lemma \ref{l:comm lambda} implies

%\begin{equation}\label{eq:1st sum}
%\abs{\pi_{com}\alpha(\gamma^n,x)}_H=o(n) \qquad \text{in probability}
%\end{equation}

Now we use the triangle inequality 

\begin{align*}
d_\infty(\alpha(\gamma^n,x),n\overline{\alpha}_{ab}(\gamma))&\le d_\infty(\alpha(\gamma^n,x), \alpha_{ab}(\gamma^n,x))\\
&+d_\infty(\alpha_{ab}(\gamma^n,x), n\overline{\alpha_{ab}}(\gamma)). \\
\end{align*}

The second summand is $o(n)$ by \eqref{eq:2nd sum}. For the first summand, we apply Lemma \ref{l:oO(n)} with $h_n(x)=\alpha(\gamma^n,x)$; by \eqref{eq:s to h}, $\abs{\pi_{ab}h_n(x)}_\infty=O(n)$ in probability, while Lemma \ref{l:comm lambda} implies $\abs{\pi_{com}h_n(x)}_\infty=o(n)$ in probability. 
 
\end{proof}

%SECTION 4
\section{Asymptotic Behavior Along Arbitrary Elements}

In this section we prove the following. 

\begin{theorem}\label{T:4}
Let $\gamma_n\in\Gamma$  be a sequence satisfying

\[
\gamma_n=s_1^{a_{n,1}}\cdots s_k^{a_{n,k}}
\]

where $s_i\in S$ are fixed, in order, independent of $n$ and for each $i$, $\bbN\ni a_{n,i}\to\infty$ as $n\to\infty$. Then whp

\[
d_{\infty}(\alpha(\gamma_n,x),\delta_{a_{n,1}}\overline{\alpha_{ab}}(s_1)\star \cdots \star\delta_{a_{n,k}}\overline{\alpha_{ab}}(s_k))=o(\max a_{n,i}).
\]
\end{theorem}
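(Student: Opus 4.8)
The plan is to reduce Theorem~\ref{T:4} to Proposition~\ref{p: iterates H_infty} by a telescoping argument using the cocycle equation, together with Lemma~\ref{L:multiplication} to handle the products. First I would write, using the cocycle identity repeatedly,
\[
\alpha(\gamma_n,x)=\alpha(s_1^{a_{n,1}},x_1)\,\alpha(s_2^{a_{n,2}},x_2)\cdots\alpha(s_k^{a_{n,k}},x_k),
\]
where $x_1=x$ and $x_{j+1}=s_j^{a_{n,j}}\cdots s_1^{a_{n,1}}\cdot x$, so each factor is an iterate-cocycle to which Proposition~\ref{p: iterates H_infty} applies. Since the $\Gamma$-action on $(X,m)$ is measure-preserving, the distribution of $\alpha(s_j^{a_{n,j}},x_j)$ is the same as that of $\alpha(s_j^{a_{n,j}},x)$; hence Proposition~\ref{p: iterates H_infty} gives, for each fixed $j$,
\[
d_\infty\bigl(\alpha(s_j^{a_{n,j}},x_j),\delta_{a_{n,j}}\overline{\alpha_{ab}}(s_j)\bigr)=o(a_{n,j})=o(\max_i a_{n,i})\qquad\text{in probability}.
\]
Intersecting these $k$ high-probability events (a finite intersection, so still whp) I may assume all $k$ factors are simultaneously $o(\max a_{n,i})$-close to the corresponding dilates $\delta_{a_{n,j}}\overline{\alpha_{ab}}(s_j)$.

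The remaining task is to pass from "each factor is close" to "the product is close," i.e.\ to show
\[
d_\infty\Bigl(\alpha(s_1^{a_{n,1}},x_1)\cdots\alpha(s_k^{a_{n,k}},x_k),\ \delta_{a_{n,1}}\overline{\alpha_{ab}}(s_1)\star\cdots\star\delta_{a_{n,k}}\overline{\alpha_{ab}}(s_k)\Bigr)=o(\max a_{n,i}).
\]
I would argue by induction on $k$, replacing one factor at a time and controlling the error of each replacement. Concretely, set $N_n=\max_i a_{n,i}$, $p_{j,n}=\delta_{a_{n,j}}\overline{\alpha_{ab}}(s_j)$ and $q_{j,n}=\alpha(s_j^{a_{n,j}},x_j)$. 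I want to bound $d_\infty(q_{1,n}\cdots q_{k,n},\,p_{1,n}\cdots p_{k,n})$ by a telescoping sum
\[
\sum_{j=1}^{k} d_\infty\bigl(p_{1,n}\cdots p_{j-1,n}q_{j,n}\cdots q_{k,n},\ p_{1,n}\cdots p_{j,n}q_{j+1,n}\cdots q_{k,n}\bigr),
\]
and the $j$-th term equals, by left-invariance of $d_\infty$, the distance
\[
d_\infty\bigl(q_{j,n}(q_{j+1,n}\cdots q_{k,n}),\ p_{j,n}(q_{j+1,n}\cdots q_{k,n})\bigr).
\]
The point is that I cannot simply use left-invariance to cancel the tail $q_{j+1,n}\cdots q_{k,n}$ since it is on the wrong side; instead I would use that $d_\infty(ab,a'b)=d_\infty(b^{-1}a^{-1}a'b,e)$ and estimate the conjugate, or — cleaner — rephrase everything via the scaling maps. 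Passing to rescaled coordinates $\ssc{N_n}{-}$: writing $a_{n,j}=\theta_{j,n}N_n$ with $\theta_{j,n}\in(0,1]$, along any subsequence on which $\theta_{j,n}\to\theta_j$ one has $\ssc{N_n}{s_j^{a_{n,j}}}\to\delta_{\theta_j}g_j$ for suitable $g_j\in G_\infty$ (by the asymptotic cone description in \S\ref{ss:graded lie algebra}, since $s_j^{a_{n,j}}$ is an iterate), and then Lemma~\ref{L:multiplication} gives $\ssc{N_n}{\gamma_n}\to(\delta_{\theta_1}g_1)\star\cdots\star(\delta_{\theta_k}g_k)$; applying Proposition~\ref{p: iterates H_infty} factorwise together with Lemma~\ref{L:multiplication} on the $\Lambda$-side shows the rescaled right-hand side converges to the same kind of product. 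Since this holds on every convergent subsequence of the $\theta_{j,n}$ and the limit is continuous in the $\theta_j$, one gets convergence in probability without passing to a subsequence.

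The main obstacle is the one just flagged: Lemma~\ref{L:multiplication} is stated for deterministic sequences $\gamma_i\in\Gamma$, whereas here the tail factors $\alpha(s_j^{a_{n,j}},x_j)$ are random elements of $\Lambda$ whose rescalings converge only in probability, and the convergence $\ssc{N_n}{q_{j,n}}\to\delta_{\theta_j}\overline{\alpha_{ab}}(s_j)$ is not uniform. So I must upgrade Lemma~\ref{L:multiplication} to a statement about convergence in probability, i.e.\ show that the multiplication map $G_\infty\times G_\infty\to G_\infty$ is (locally uniformly) continuous and that if $\ssc{N_n}{\mu_n}\to g$ and $\ssc{N_n}{\nu_n}\to g'$ in probability then $\ssc{N_n}{\mu_n\nu_n}\to g\star g'$ in probability. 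The quantitative input for this is the Baker--Campbell--Hausdorff estimate already encoded in Lemma~\ref{L:multiplication} and the norm-comparison Lemma~\ref{l:o(n)}: the error introduced by BCH when multiplying two elements of norm $O(N_n)$ is controlled by a modulus of continuity that vanishes as the inputs approach their limits, and one needs to check this modulus is uniform over the compact range in which the $q_{j,n}$ live with high probability (using that $\abs{q_{j,n}}_\infty=O(N_n)$ whp, which follows from Proposition~\ref{p: iterates H_infty}). Once this probabilistic multiplication lemma is in hand, the induction on $k$ is routine and the $o(\max a_{n,i})$ bound drops out of accumulating $k$ many $o(N_n)$ errors.
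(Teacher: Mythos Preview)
Your approach is essentially identical to the paper's: decompose via the cocycle equation, pass to a subsequence along which the ratios $a_{n,i}/\max_j a_{n,j}$ converge, apply Proposition~\ref{p: iterates H_infty} to each factor, and combine with Lemma~\ref{L:multiplication}. The paper packages this as a proof by contradiction, which streamlines the subsequence bookkeeping and dissolves your worry about a probabilistic upgrade of Lemma~\ref{L:multiplication} (along a further a.s.-convergent subsequence the deterministic lemma applies verbatim); note also that with the convention $\alpha(\gamma_1\gamma_2,x)=\alpha(\gamma_1,\gamma_2 x)\alpha(\gamma_2,x)$ the correct base points are $x_j=s_{j+1}^{a_{n,j+1}}\cdots s_k^{a_{n,k}}\cdot x$ (so $x_k=x$), not $x_1=x$, though this is inconsequential since only measure-preservation is used.
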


We note that, for any sequence $\gamma_n\in\Gamma$, it is possible to write the $\gamma_n$ to satisfy the hypotheses of Theorem \ref{T:4}. Indeed, by Proposition 3.3 in \cite{Austin}, there is always a $K$ so that every $\gamma=s_1^{a_1}\cdots s_k^{a_k}$ with $a_i\in\bbN$, $s\in S$ and $k\le K$. By increasing $K$, one may assume that every $\gamma$ is represented with the same ordered generating set. By increasing $K$ again, we ensure $a_{n,i}\to\infty$ as $n\to\infty$ for each $i$. Indeed, for every $\gamma$, look at $a=\max a_i$, and for each $1\le j \le k$ so that $a_j<a/2$, rewrite $s_j^{a_j}=s_j^as_j^{a_j-a}$. We will not use either of these observations.

There is a natural way to compare the two points above. Using the cocycle equation we write

\[
\alpha(\gamma_n,x)=\alpha(s_1^{a_{n,1}},x_1)\cdots \alpha(s_k^{a_{n,k}},x_k)
\]

where $x_i:=s_{i+1}^{a_{n,{i+1}}}\cdots s_k^{a_{n,k}}x$. Proposition \ref{p: iterates H_infty} relates $\alpha(s_i^{a_{n,i}},x_i)$ and $\delta_{a_{n,i}}\overline{\alpha_{ab}}(s_i)$. We use the uniform boundedness of $k$ and Lemma \ref{L:multiplication} to extend Proposition \ref{p: iterates H_infty} to Theorem \ref{T:4}.
%herenow
\begin{proof}[Proof of Theorem \ref{T:4}]
By the cocycle equation, it is enough to show that whp
\[
d_\infty(\alpha(s_1^{a_1},x_1)\cdots \alpha(s_k^{a_k},x_k),\delta_{a_{n,1}}\overline{\alpha_{ab}}(s_1)\star \cdots \star\delta_{a_{n,k}}\overline{\alpha_{ab}}(s_k))=o(\max a_{n,i}).
\]

For each $n$, let $a_n=\max a_{n,i}$. Now suppose the conclusion is false. Then there are $\epsilon,\delta>0$ and a subsequence (we keep the index $n$) so that
\[
m(x:d_\infty(\alpha(s_1^{a_{n,1}},x_1)\cdots \alpha(s_k^{a_{n,k}},x_k),\delta_{a_{n,1}}\overline{\alpha_{ab}}(s_1)\star \cdots \star\delta_{a_{n,k}}\overline{\alpha_{ab}}(s_k))>\delta a_n)>\epsilon
\]

Notice that $0\le a_{n,i}/a_n\le 1$. Therefore, after taking a diagonal subsequence, we may assume that $a_{n,i}/a_n\to a_i$ for each $1\le i \le k$. Proposition \ref{p: iterates H_infty} implies that, for every $1\le i \le k$, whp as $n\to\infty$
\[
\ssc{a_{n,i}}{\alpha(s_i^{a_{n,i}},x)}\longrightarrow \overline{\alpha_{ab}}(s_i).
\]

The above, and an easy calculation in coordinates using the definition of the $\delta_t$ and that $a_{n,i}/a_n\to a_i$ shows that for all $1\le i \le k$, whp as $n\to\infty$.
\[
\ssc{a_n}{\alpha(s_i^{a_{n,i}},x)}=\delta_{a_{n,i}/a_n}\ssc{a_{n,i}}{\alpha(s_i^{a_{n,i}},x)}\longrightarrow \delta_{a_i}\overline{\alpha_{ab}}(s_i).
\]

Invoking Lemma \ref{L:multiplication}, whp as $n\to\infty$
\[
\ssc{a_n}{\alpha(s_1^{a_{n,1}},x_1)\cdots \alpha(s_k^{a_{n,k}},x_k)}\longrightarrow \delta_{a_1}\overline{\alpha_{ab}}(s_1)\star\cdots\star\delta_{a_k}\overline{\alpha_{ab}}(s_k)
\]

which is equivalent to
\[
d_\infty(\alpha(s_1^{a_{n,1}},x_1)\cdots \alpha(s_k^{a_{n,k}},x_k),\delta_{a_na_1}\overline{\alpha_{ab}}(s_1)\star\cdots\star\delta_{a_na_k}\overline{\alpha_{ab}}(s_k))=o(a_n).
\]

But 
\[
d_\infty(\delta_{a_na_1}\overline{\alpha_{ab}}(s_1)\star\cdots\star\delta_{a_na_k}\overline{\alpha_{ab}}(s_k),\delta_{a_{n,1}}\overline{\alpha_{ab}}(s_1)\star\cdots\star\delta_{a_{n,k}}\overline{\alpha_{ab}}(s_k))=o(a_n),
\]

so we have a contradiction.

\end{proof}

%%%%%%%%%%%%SECTION 5
\section{Construction of $\Phi$ and Proof of Main Theorem}
In this section we construct $\Phi$, prove Theorem \ref{T:Main} and deduce Theorem \ref{t:derivative}. 

\begin{defn}
Let $(G_\infty,\delta_t)$ be a graded nilpotent lie group with its one-parameter family of automorphisms. A finite symmetric subset $S\subset G_\infty$ \textit{generates} $G_\infty$ with respect to $\delta_t$, $t\ge 0$, if for every $g\in G_\infty$ there exist $k\in\bbN$, $s_1,\ldots ,s_k\in S$ and $a_1,\ldots ,a_k\in \bbR_+$ so that

\begin{equation}\label{eq:g rep}
g=\delta_{a_1}s_1\star\cdots\star \delta_{a_k}s_k.
\end{equation}
\end{defn}

\begin{example}\label{ex:genset}
In the Mal'cev coordinates on $G_\infty$, the set of $d'=\dim(G_\infty/G_\infty^2)$ elements
\[
\left \{ {(1,0,\ldots ,0),(0,1,0,\ldots 0),\ldots ,(0,\ldots ,1,0,\ldots ,0)}\right \}
\]

together with their inverses form a finite symmetric generating set for $G_\infty$ with respect to the homotheties $\delta_t$.

More generally any finite symmetric set with real span containing $V_1=\mfg/\mfg^2$ generates $G_\infty$ with respect to $\delta_t$. Indeed, the group generated by $\exp_\infty(V_1)$ is a connected subgroup of $G_\infty$, so by the Lie correspondence, its Lie algebra is a sub algebra of $\mfg_\infty$ containing $V_1$. Since $V_1$ generates $\mfg_\infty$ as a Lie algebra, the group generated by $\exp_\infty(V_1)$ is all of $G_\infty$.
\end{example}

We can now give a definition of $\Phi$ that will a priori depend on a choice of representation of $g\in G_\infty$ in the generating set $S$. Later on we will prove that there was in fact no choice involved. Let $S\subset G_\infty$ be the set of $2d'$ elements from Example \ref{ex:genset}. 

\begin{defn}[First Definition of $\Phi$]
\[
\Phi(g)=\delta_{a_1}\overline{\alpha_{ab}}(s_1)\star  \delta_{a_2}\overline{\alpha_{ab}}(s_2)\star\cdots\star \delta_{a_k}\overline{\alpha_{ab}}(s_k)
\]

where 

\[
g=\delta_{a_1}s_1\star \cdots\star \delta_{a_k}s_k
\]

is a fixed choice of a representation of $g$ as in \eqref{eq:g rep}.
\end{defn}

\begin{proposition}\label{p:one sequence} For each $g\in G_\infty$ there is a sequence $\gamma_n\in\Gamma$ so that 

\begin{itemize}
\item $\ssc{n}{\gamma_n}\to g$ \\
\item  $\ssc{n}{\alpha(\gamma_n,x)}\to \Phi(g)$ with high probability as $n\to\infty$.
\end{itemize}
\end{proposition}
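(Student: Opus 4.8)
The plan is to produce $\gamma_n$ explicitly from the fixed representation $g=\delta_{a_1}s_1\star\cdots\star\delta_{a_k}s_k$ that defines $\Phi(g)$, and then to read off both bullets from Theorem \ref{T:4}. First I would delete every index $i$ with $a_i=0$: for such $i$ one has $\delta_{a_i}s_i=e$ and $\delta_{a_i}\overline{\alpha_{ab}}(s_i)=e$, so deleting these factors changes neither $g$ nor $\Phi(g)$, and we may assume $a_i>0$ for all $i$. Each $s_i$ is one of the generators of Example \ref{ex:genset}, all of whose logarithmic coordinates beyond the first $d'$ vanish, so $s_i\in\Gamma$; hence we may set
\[
\gamma_n:=s_1^{\,a_{n,1}}s_2^{\,a_{n,2}}\cdots s_k^{\,a_{n,k}}\in\Gamma,\qquad a_{n,i}:=\lceil na_i\rceil\in\bbN ,
\]
and $a_{n,i}\to\infty$ as $n\to\infty$ since $a_i>0$.

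For the first bullet, a direct computation in logarithmic coordinates gives $s_i^{\,m}=(0,\ldots,m,\ldots,0)$ with the $m$ in the $i$-th (degree-one) coordinate, so $\ssc{m}{s_i^{\,m}}=s_i$ for every $m\ge 1$. Using the identity $\ssc{n}{\eta}=\delta_{m/n}\,\ssc{m}{\eta}$ with $m=a_{n,i}$ (as in the proof of Theorem \ref{T:4}), together with $a_{n,i}/n\to a_i$ and the continuity of $t\mapsto\delta_t$, we get $\ssc{n}{s_i^{\,a_{n,i}}}=\delta_{a_{n,i}/n}(s_i)\to\delta_{a_i}s_i$ for each $i$. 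Applying Lemma \ref{L:multiplication} $k-1$ times then yields $\ssc{n}{\gamma_n}\to\delta_{a_1}s_1\star\cdots\star\delta_{a_k}s_k=g$.

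For the second bullet, apply Theorem \ref{T:4} to the sequence $\gamma_n$ just constructed — this is legitimate because the proof of that theorem uses only Proposition \ref{p: iterates H_infty}, valid for arbitrary elements of $\Gamma$, and Lemma \ref{L:multiplication}. It gives, with high probability,
\[
d_\infty\!\left(\alpha(\gamma_n,x),\ \delta_{a_{n,1}}\overline{\alpha_{ab}}(s_1)\star\cdots\star\delta_{a_{n,k}}\overline{\alpha_{ab}}(s_k)\right)=o\!\left(\max_i a_{n,i}\right)=o(n).
\]
Since $\delta_n$ is an automorphism of $H_\infty$ and $\delta_n\delta_{a_i}=\delta_{na_i}$, we have $\delta_n\Phi(g)=\delta_{na_1}\overline{\alpha_{ab}}(s_1)\star\cdots\star\delta_{na_k}\overline{\alpha_{ab}}(s_k)$, and likewise the right-hand product above equals $\delta_n$ applied to $\delta_{a_{n,1}/n}\overline{\alpha_{ab}}(s_1)\star\cdots\star\delta_{a_{n,k}/n}\overline{\alpha_{ab}}(s_k)$. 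By homogeneity of $d_\infty$ the distance between it and $\delta_n\Phi(g)$ equals $n$ times the distance between these two $\delta_{1/n}$-scaled products, and the latter tends to $0$ as $n\to\infty$ because $a_{n,i}/n\to a_i$ and multiplication and the maps $\delta_t$ are continuous (Lemma \ref{L:multiplication}) — this is exactly the estimate that closes the proof of Theorem \ref{T:4}. Hence that distance is $o(n)$, and the triangle inequality gives $d_\infty(\alpha(\gamma_n,x),\delta_n\Phi(g))=o(n)$ with high probability, which is the assertion $\ssc{n}{\alpha(\gamma_n,x)}\to\Phi(g)$ whp.

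I expect no serious obstacle: the content is simply that the obvious sequence realizing $g$ in the asymptotic cone also realizes $\Phi(g)$ once pushed through the cocycle $\alpha$. The two points requiring slight care are that the generators of $G_\infty$ from Example \ref{ex:genset} genuinely lie in $\Gamma$ (so that $\gamma_n$ and the application of Theorem \ref{T:4} make sense), and the replacement of the integer exponents $\lceil na_i\rceil$ by the real scalars $na_i$, which is harmless because they differ by $O(1)$ and the relevant group elements lie in the first stratum, where $\delta_t$ acts by scaling.
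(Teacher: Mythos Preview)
Your proof is correct and follows essentially the same route as the paper's: you build $\gamma_n$ from the fixed representation of $g$ by rounding $na_i$ to an integer (you take ceilings, the paper takes floors), verify $\ssc{n}{\gamma_n}\to g$ via Lemma~\ref{L:multiplication}, and then invoke Theorem~\ref{T:4} together with $a_{n,i}/n\to a_i$ for the second bullet. You are somewhat more explicit than the paper on two points --- that the generators from Example~\ref{ex:genset} lie in $\Gamma$, and that Theorem~\ref{T:4} applies because its proof only needs $s_i\in\Gamma$ rather than $s_i\in S$ --- but these are clarifications of the same argument, not a different approach.
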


\begin{proof}
Fix $g\in G_\infty$ and the choice of representation of $g$

\[
g=\delta_{a_1}s_1\star\cdots\star \delta_{a_k}s_k
\]

as in \eqref{eq:g rep}. For each $n\in\bbN$ and each $1\le i \le k$ set $m_{n,i}=\left \lfloor{na_i}\right \rfloor$, the greatest integer less than or equal to $na_i$. Then for each $1\le i \le k$ as $n\to\infty$ 

\begin{equation}\label{eq:conv to a_i}
\frac{m_{n,i}}{n}\to a_i.
\end{equation}

Now define for $n\in\bbN$

\[
\gamma_n=s_1^{m_{n,1}}s_2^{m_{n,2}}\cdots s_k^{m_{n,k}}.
\]

First notice that for each $1\le i \le k$ we have

\[
\ssc{n}{s_i^{m_{n,i}}}\to \delta_{a_i}s_i.
\]

Therefore by Lemma \ref{L:multiplication}

\[
\ssc{n}{\gamma_n}\to g,
\]

giving the first item. For the second item we invoke Theorem \ref{T:4}, which says that whp

\[
d_\infty(\alpha(\gamma_n,x),\delta_{m_{n,1}}\overline{\alpha_{ab}}(s_1)\star\cdots\star \delta_{m_{n,k}}\overline{\alpha_{ab}}(s_k))=o(\max m_{n,i}).
\]

By \eqref{eq:conv to a_i} the right hand side is $o(n)$. Thus whp as $n\to\infty$

\[
d_\infty(\delta_{1/n}\alpha(\gamma_n,x),\delta_{m_{n,1}/n}\overline{\alpha_{ab}}(s_1)\star\cdots\star \delta_{m_{n,k}/n}\overline{\alpha_{ab}}(s_k)) \to 0
\]

But as $n\to\infty$

\[
\delta_{m_{n,1}/n}\overline{\alpha_{ab}}(s_1)\star\cdots\star \delta_{m_{n,k}/n}\overline{\alpha_{ab}}(s_k)\to \Phi(g)
\]

which finishes the proof.
\end{proof}

The next Proposition says that $\ssc{n}{\alpha(\sigma_n,x)}\to \Phi(g)$ \emph{uniformly} as $\ssc{n}{\sigma_n}\to g$.

\begin{proposition}\label{p:all sequences}
Fix $g\in G_\infty$. For all $\epsilon_1,\epsilon_2>0$ there exist $\delta>0$ and $N\in\bbN$ so that whenever $\sigma\in\Gamma$ and $n\ge N$ are such that $d_{G_\infty}(\ssc{n}{\sigma},g)<\delta$, then with probability at least $1-\epsilon_1$ we have

\[
d_{H_\infty}(\ssc{n}{\alpha(\sigma,x)},\Phi(g))<\epsilon_2.
\]

In particular, for \emph{any} sequence $\ssc{n}{\sigma_n}\to g$ we have $\ssc{n}{\alpha(\sigma_n,x)}\to\Phi(g)$ in probability.
\end{proposition}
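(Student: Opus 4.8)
The plan is to deduce this from Proposition~\ref{p:one sequence} by writing an arbitrary $\sigma$ as a product of the ``good'' sequence $\gamma_n$ furnished by that proposition with a correction term that is negligible at scale $n$, organizing the argument by contradiction in the style of the proof of Theorem~\ref{T:4}. It suffices to prove the uniform clause; the ``in particular'' assertion then follows at once by taking $n$ large.

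Assume the uniform clause fails for some $\epsilon_1,\epsilon_2>0$. Then I obtain $\sigma_j\in\Gamma$ and $n_j\ge j$ with $d_{G_\infty}(\ssc{n_j}{\sigma_j},g)<1/j$, so that $n_j\to\infty$ and $\ssc{n_j}{\sigma_j}\to g$, yet
\[
m\bigl(d_{H_\infty}(\ssc{n_j}{\alpha(\sigma_j,x)},\Phi(g))\ge\epsilon_2\bigr)\ge\epsilon_1\qquad\text{for all }j.
\]
Let $(\gamma_n)$ be as in Proposition~\ref{p:one sequence}, so $\ssc{n}{\gamma_n}\to g$ and $\ssc{n}{\alpha(\gamma_n,x)}\to\Phi(g)$ whp, and put $\rho_j:=\gamma_{n_j}^{-1}\sigma_j$, so $\sigma_j=\gamma_{n_j}\rho_j$. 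I will show $\ssc{n_j}{\alpha(\sigma_j,x)}\to\Phi(g)$ in probability, contradicting the display.

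First I would observe that in logarithmic coordinates $\scl{t}{\gamma^{-1}}=(\scl{t}{\gamma})^{-1}$ (since $\log$ negates, and $\delta_{1/t}$, $L$ are linear, while $\exp_\infty(-w)=\exp_\infty(w)^{-1}$), hence $\ssc{n_j}{\gamma_{n_j}^{-1}}\to g^{-1}$; Lemma~\ref{L:multiplication} then gives $\ssc{n_j}{\rho_j}\to g^{-1}\star g=e$. By homogeneity of the Carnot--Carath\'eodory norm this forces $|\rho_j|_\infty=o(n_j)$, hence $|\rho_j|_\Gamma=o(n_j)$ by Lemma~\ref{l:o(n)}. Consequently $\bigl\||\alpha(\rho_j,\cdot)|_\Lambda\bigr\|_1\le\kappa\,|\rho_j|_\Gamma=o(n_j)$ with $\kappa=\max_{s\in S}\bigl\||\alpha(s,\cdot)|_\Lambda\bigr\|_1$, so Markov's inequality gives $|\alpha(\rho_j,x)|_\Lambda=o(n_j)$ in probability and, via Lemma~\ref{l:o(n)} once more, $\ssc{n_j}{\alpha(\rho_j,x)}\to e$ in probability. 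Independently, since $x\mapsto\rho_j\cdot x$ preserves $m$, the random variable $\ssc{n_j}{\alpha(\gamma_{n_j},\rho_j\cdot x)}$ has the same law as $\ssc{n_j}{\alpha(\gamma_{n_j},x)}$, which converges to $\Phi(g)$ in probability by Proposition~\ref{p:one sequence}. Finally the cocycle identity reads $\alpha(\sigma_j,x)=\alpha(\gamma_{n_j},\rho_j\cdot x)\,\alpha(\rho_j,x)$; from an arbitrary subsequence I pass to a further subsequence along which both rescaled factors converge almost everywhere (possible since convergence in probability passes to a.e. convergence along a subsequence), and Lemma~\ref{L:multiplication}, applied pointwise in $\Lambda$ and $H_\infty$, yields $\ssc{n_j}{\alpha(\sigma_j,x)}\to\Phi(g)\star e=\Phi(g)$ a.e. along that subsequence; hence $\ssc{n_j}{\alpha(\sigma_j,x)}\to\Phi(g)$ in probability.

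The point I expect to demand the most care is the passage through $\alpha(\gamma_{n_j},\rho_j\cdot x)$: the argument works only because $\rho_j$ is a \emph{deterministic} group element, so that measure-preservation of the action transfers the conclusion of Proposition~\ref{p:one sequence} to the shifted point $\rho_j\cdot x$, while at the same time $\rho_j$ is \emph{negligible at scale $n_j$}, so that its own cocycle contribution vanishes in probability. Both properties hold precisely because $\gamma_{n_j}$ comes from Proposition~\ref{p:one sequence}, whose rescalings converge to $g$ exactly; a merely approximate good sequence would only give $|\rho_j|_\Gamma=O(\delta n_j)$ and would not close the contradiction. One also has to phrase the use of Lemma~\ref{L:multiplication} through the ``every subsequence has an a.e.-convergent further subsequence'' characterization of convergence in probability to a constant, since Lemma~\ref{L:multiplication} is stated for deterministic sequences.
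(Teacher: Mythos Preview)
Your proof is correct and follows essentially the same approach as the paper: both compare $\sigma$ to the sequence $\gamma_n$ from Proposition~\ref{p:one sequence} via a correction term that is $o(n)$ in $\Gamma$, then combine the cocycle identity, Markov's inequality, and measure-preservation of the action. The only differences are cosmetic---the paper argues directly with an explicit choice of $\delta$ and the bi-Lipschitz property of $\scl{n}{-}$, while you argue by contradiction using Lemma~\ref{L:multiplication} and the subsequence characterization of convergence in probability.
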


\begin{proof}
Fix $g\in G_\infty$ and $\epsilon_1,\epsilon_2>0$. Choose $\delta>0$ small so that $\kappa(1+\epsilon_2)2\delta/\epsilon_1<\epsilon_2$ where $\kappa=\max_{s\in S} \|\abs{\alpha(s,\cdot)}_\Lambda\|_1$. Let $\gamma_n$ be the sequence from Proposition \ref{p:one sequence}. Choose $N$ large so that for all $n\ge N$, $d_{G_\infty}(\ssc{n}{\gamma_n},g)<\delta$ and so that $d_{H_\infty}(\ssc{n}{\alpha(\gamma_n,x)},\Phi(g))<\epsilon_2$ with probability at least $1-\epsilon_1$. Choose $N$ larger if necessary so that the maps $\text{scl}_n^{\Gamma}$ and $\text{scl}_n^{\Lambda}$ are $(1+\epsilon_2)$-bi-Lipschitz for all $n\ge N$. 

Now suppose $d_{G_\infty}(\ssc{n}{\sigma},g)<\delta$ where $n\ge N$. Then $d_{G_\infty}(\ssc{n}{\sigma},\ssc{n}{\gamma_n})<2\delta$, which implies $d_\Gamma(\sigma,\gamma_n)<(1+\epsilon_2)n2\delta$. Set $\tau=\sigma^{-1}\gamma_n$, so $\abs{\tau}<(1+\epsilon_2)n2\delta$. By Markov's inequality

\[
m(\abs{\alpha(\tau,x)}_\Lambda \ge \kappa \abs{\tau}/\epsilon_1)\le\epsilon_1
\]

Thus by our choice of $\delta$, with probability at least $1-\epsilon_1$, we have

\[
\abs{\alpha(\tau,x)}\le n\epsilon_2.
\]

Using the cocycle equation $\alpha(\gamma_n,x)=\alpha(\sigma,\tau x)\alpha(\tau,x)$ and that $\text{scl}_n^{\Lambda}$ is $(1+\epsilon_2)$-bi-Lipschitz we have

\[
d_{H_\infty}(\ssc{n}{\alpha(\gamma_n,x)},\ssc{n}{\alpha(\sigma,\tau x)})<(1+\epsilon_2)\epsilon_2
\]

with probability at least $1-\epsilon_1$. Since $d_{H_\infty}(\ssc{n}{\alpha(\gamma_n,x)},\Phi(g))<\epsilon_2$ with probability at least $1-\epsilon_1$, we are done.
\end{proof}

The next Corollary says that the definition of $\Phi$ is independent of the choice of representation of $g$ in the generating set $S$.

\begin{corollary}
Suppose $g\in G_\infty$ can be written 

\[
g=\delta_{a'_1}s'_1\star\cdots\star \delta_{a'_{k'}}s'_{k'}.
\]

where $a'_i\in\bbR_+$ and $s'_i\in S$. Define

\[
\Phi'(g)=\delta_{a'_1}\overline{\alpha_{ab}}(s'_1)\star\cdots\star\delta_{a'_{k'}}\overline{\alpha_{ab}}(s'_{k'}).
\]

Then $\Phi(g)=\Phi'(g)$.
\end{corollary}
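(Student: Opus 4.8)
The plan is to deduce the corollary directly from Proposition~\ref{p:one sequence} and Proposition~\ref{p:all sequences}. The key observation is that both $\Phi(g)$ and $\Phi'(g)$ are limits (in probability) of the same rescaled cocycle values $\scl{n}{\alpha(\sigma_n,x)}$ along \emph{some} sequence $\sigma_n$ with $\scl{n}{\sigma_n}\to g$, and since limits in probability are unique, they must agree.

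Concretely, first I would invoke Proposition~\ref{p:one sequence} with the representation $g=\delta_{a_1}s_1\star\cdots\star\delta_{a_k}s_k$ to produce a sequence $\gamma_n\in\Gamma$ with $\scl{n}{\gamma_n}\to g$ and $\scl{n}{\alpha(\gamma_n,x)}\to\Phi(g)$ in probability. Then I would apply Proposition~\ref{p:one sequence} a second time, now to the representation $g=\delta_{a'_1}s'_1\star\cdots\star\delta_{a'_{k'}}s'_{k'}$, obtaining another sequence $\gamma'_n\in\Gamma$ with $\scl{n}{\gamma'_n}\to g$ and $\scl{n}{\alpha(\gamma'_n,x)}\to\Phi'(g)$ in probability. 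Now both sequences satisfy $\scl{n}{\gamma_n}\to g$ and $\scl{n}{\gamma'_n}\to g$, so Proposition~\ref{p:all sequences} applies to \emph{each} of them: since the conclusion of that proposition is that for any sequence $\scl{n}{\sigma_n}\to g$ one has $\scl{n}{\alpha(\sigma_n,x)}\to\Phi(g)$ in probability, we get $\scl{n}{\alpha(\gamma'_n,x)}\to\Phi(g)$ in probability as well.

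Finally I would combine the two conclusions about the sequence $\gamma'_n$: we have $\scl{n}{\alpha(\gamma'_n,x)}\to\Phi'(g)$ and $\scl{n}{\alpha(\gamma'_n,x)}\to\Phi(g)$, both in probability. Convergence in probability (here in the Polish space $(H_\infty,d_\infty)$) has unique limits, so $\Phi(g)=\Phi'(g)$ in $H_\infty$. This completes the argument.

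There is really no serious obstacle: the entire content has been pushed into Proposition~\ref{p:all sequences}, whose statement already explicitly asserts sequence-independence of the limit. The only point requiring a sentence of care is that ``convergence in measure/probability to two constants forces the constants to coincide'' — one notes that if $d_\infty(\alpha(\gamma'_n,x),\Phi(g))<\epsilon$ and $d_\infty(\alpha(\gamma'_n,x),\Phi'(g))<\epsilon$ both hold with probability $\to 1$, then with positive probability both hold for large $n$, whence $d_\infty(\Phi(g),\Phi'(g))<2\epsilon$ by the triangle inequality, and letting $\epsilon\to 0$ gives $\Phi(g)=\Phi'(g)$. One could also note that this corollary is precisely what licenses dropping the adjective ``First'' from the definition of $\Phi$ in the sequel.
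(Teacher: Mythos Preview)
Your argument is correct and essentially identical to the paper's: produce $\gamma'_n$ from the primed representation so that $\scl{n}{\alpha(\gamma'_n,x)}\to\Phi'(g)$, then invoke Proposition~\ref{p:all sequences} to conclude the same sequence also converges to $\Phi(g)$, and finish by uniqueness of limits in probability. The only cosmetic difference is that Proposition~\ref{p:one sequence} is literally stated for the fixed representation defining $\Phi$, so the paper phrases the second step as ``repeat the proof of Proposition~\ref{p:one sequence} with $\Phi'$ in place of $\Phi$'' rather than ``apply Proposition~\ref{p:one sequence}''; your step~1 is also superfluous, since $\Phi(g)$ enters only through Proposition~\ref{p:all sequences}.
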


\begin{proof}
Repeat the proof of Proposition \ref{p:one sequence} with $\Phi'$ in place of $\Phi$. Doing so we obtain $\gamma'_n\in\Gamma$ so that $\ssc{n}{\gamma'_n}\to g$ and so that $\delta_{1/n}\alpha(\gamma'_n,x)\to \Phi'(g)$ in probability. By Proposition \ref{p:all sequences} $\delta_{1/n}\alpha(\gamma'_n,x)\to\Phi(g)$ in probability. Therefore $\Phi'(g)=\Phi(g)$.
\end{proof}

%%%%%%%%%%%%%%%%%%SUBSECTION
\subsection{$\Phi$ is a bi-Lipschitz group automorphism}

We can now show that $\Phi$ is a group isomorphism. Since any two Carnot-Carath\'eodory metrics on the same Carnot group are bi-Lipschitz to one another, we deduce that $\Phi$ is bi-Lipschitz. Let $\Psi$ denote the result of the above construction applied to the cocycle $\beta$ instead of $\alpha$. By symmetry, all of the results above apply equally to $\Psi$. We will see that $\Psi$ and $\Phi$ are inverses.

\begin{proposition}
$\Phi$ is a homomorphism.
\end{proposition}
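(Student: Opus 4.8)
The plan is to use the characterization of $\Phi$ through Proposition~\ref{p:all sequences}: the value $\Phi(g)$ is determined by the requirement that $\ssc{n}{\alpha(\sigma_n,x)}\to\Phi(g)$ in probability for \emph{any} sequence $\sigma_n\in\Gamma$ with $\ssc{n}{\sigma_n}\to g$. Given $g,g'\in G_\infty$, I would pick sequences $\gamma_n,\gamma'_n\in\Gamma$ with $\ssc{n}{\gamma_n}\to g$ and $\ssc{n}{\gamma'_n}\to g'$ — for instance the explicit sequences produced in Proposition~\ref{p:one sequence}, which additionally satisfy $\ssc{n}{\alpha(\gamma_n,x)}\to\Phi(g)$ and $\ssc{n}{\alpha(\gamma'_n,x)}\to\Phi(g')$ with high probability.

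First I would observe that by Lemma~\ref{L:multiplication} applied in $\Gamma<G$, the product sequence satisfies $\ssc{n}{\gamma_n\gamma'_n}\to g\star g'$. Next, the cocycle equation gives
\[
\alpha(\gamma_n\gamma'_n,x)=\alpha(\gamma_n,\gamma'_n\cdot x)\,\alpha(\gamma'_n,x).
\]
Since the $\Gamma$-action on $(X,m)$ is measure preserving, the high-probability convergence $\ssc{n}{\alpha(\gamma_n,x)}\to\Phi(g)$ remains valid with $x$ replaced by $\gamma'_n\cdot x$ (the measure of the relevant good event is unchanged). Thus with high probability $\ssc{n}{\alpha(\gamma_n,\gamma'_n\cdot x)}\to\Phi(g)$ and $\ssc{n}{\alpha(\gamma'_n,x)}\to\Phi(g')$ simultaneously. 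Applying Lemma~\ref{L:multiplication} once more, this time in $H_\infty$ (or in $H$, then transferring via Lemma~\ref{l:o(n)}), I get $\ssc{n}{\alpha(\gamma_n\gamma'_n,x)}\to\Phi(g)\star\Phi(g')$ with high probability. On the other hand, since $\ssc{n}{\gamma_n\gamma'_n}\to g\star g'$, Proposition~\ref{p:all sequences} forces $\ssc{n}{\alpha(\gamma_n\gamma'_n,x)}\to\Phi(g\star g')$ in probability. Two limits in probability of the same sequence must agree, so $\Phi(g\star g')=\Phi(g)\star\Phi(g')$.

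The one point requiring a little care — and the likeliest source of a gap if stated too glibly — is that "high probability" convergence statements for $\ssc{n}{\alpha(\gamma_n,\gamma'_n\cdot x)}$ and $\ssc{n}{\alpha(\gamma'_n,x)}$ must be combined on a \emph{common} event of probability close to $1$ before Lemma~\ref{L:multiplication} can be invoked pointwise; since there are only two events, intersecting them costs at most $2\epsilon$, so for any $\epsilon,\delta>0$ one can find $N$ beyond which, off an event of measure $<2\epsilon$, both scaled cocycle values are within $\delta$ of their limits and hence (by Lemma~\ref{L:multiplication}, whose conclusion is uniform on compact neighborhoods) the scaled product is within a controlled error of $\Phi(g)\star\Phi(g')$. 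This is routine but should be spelled out. Everything else is an immediate application of results already established, so I do not expect any serious obstacle beyond this bookkeeping.
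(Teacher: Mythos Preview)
Your proof is correct and follows essentially the same approach as the paper: pick sequences converging to $g$ and $g'$, use Lemma~\ref{L:multiplication} on the $\Gamma$ side to get convergence of the product to $g\star g'$, apply the cocycle identity together with measure-preservation and Lemma~\ref{L:multiplication} on the $\Lambda$ side to see the scaled cocycle of the product converges to $\Phi(g)\star\Phi(g')$, and compare with Proposition~\ref{p:all sequences}. Your added remark about intersecting the two high-probability events is exactly the bookkeeping the paper leaves implicit.
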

\begin{proof}
Fix $g,h\in G_\infty$ and $\ssc{n}{\gamma_n}\to g$ and $\ssc{n}{\sigma_n}\to h$. Then by Lemma \ref{L:multiplication} and Proposition \ref{p:all sequences}
\begin{itemize}
\item $\ssc{n}{\alpha(\gamma_n,\sigma_nx)}\to\Phi(g)$ in probability \\
\item $\ssc{n}{\alpha(\sigma_n,x)}\to\Phi(h)$ in probability\\
\item $\ssc{n}{\gamma_n\sigma_n}\to gh$ \\
\item $\ssc{n}{\alpha(\gamma_n\sigma_n,x)}\to\Phi(gh)$ in probability.
\end{itemize}

Invoking Lemma \ref{L:multiplication} in $\Lambda$, with high probability

\[
 \ssc{n}{\alpha(\gamma_n,\sigma_nx)\alpha(\sigma_n,x)}\to \Phi(g)\star\Phi(h).
\]

Combining this with the fourth item, the proof is complete.
\end{proof}

To show that $\Phi$ and $\Psi$ are inverse maps, we need the following nilpotent group variant of Poincar\'e recurrence.

\begin{lemma}[Poincar\'e recurrence for nilpotent groups]\label{l:Poincare}
Fix $g\in G_\infty$ and let $A\subset X$ with $m(A)>0$. Then

\[
m\{ x\in A: \exists \gamma_{n_k} \in\Gamma~ \exists n_k\in\bbN ~\text{such that} \ssc{n_k}{\gamma_{n_k}}\to g~\text{and}~\gamma_{n_k}\cdot x\in A\}=m(A).
\]

\end{lemma}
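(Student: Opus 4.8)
The plan is to reduce the statement to the classical Poincaré recurrence theorem by exhibiting, for the fixed $g\in G_\infty$, a \emph{single} sequence $\gamma_n\in\Gamma$ with $\ssc{n}{\gamma_n}\to g$ such that the set of $x\in A$ with $\gamma_n\cdot x\in A$ for infinitely many $n$ already has full measure in $A$. Once such a sequence is found, the desired set in the lemma contains $\{x\in A:\gamma_{n_k}\cdot x\in A\text{ for some subsequence}\}$, so it suffices to prove the claim for the one fixed sequence. By Proposition~\ref{p:one sequence} (or simply Example~\ref{ex:genset} together with Lemma~\ref{L:multiplication}), there is no obstruction to producing some $\gamma_n$ with $\ssc{n}{\gamma_n}\to g$; the real content is the recurrence along it.

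The key idea is that the sequence $\gamma_n$ we construct is not arbitrary: it is built from long powers of generators, $\gamma_n=s_1^{m_{n,1}}\cdots s_k^{m_{n,k}}$ with $m_{n,i}\to\infty$. The point is that each individual transformation $A(s_i)$ (equivalently, the pmp map $x\mapsto s_i\cdot x$ on $X$) is a single measure-preserving transformation, so classical Poincaré recurrence applies to it: for each $i$ and each positive-measure set $B$, the orbit $\{s_i^m\cdot x : m\ge 0\}$ returns to $B$ infinitely often for a.e.\ $x\in B$. The strategy is to chain these recurrences. Concretely, I would proceed in $k$ stages, peeling off the generators from the right. Set $B_k=A$. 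Having a positive-measure set $B_{j}\subset A$, apply Poincaré recurrence to the transformation $x\mapsto s_j\cdot x$ to find a positive-measure subset $B_{j-1}\subset B_j$ and arbitrarily large exponents $m$ with $s_j^{m}\cdot x\in B_j$ for $x\in B_{j-1}$; one must be careful to extract exponents that can simultaneously be taken comparable to $na_j$, which is possible because Poincaré recurrence gives return times that are syndetic-free but \emph{infinitely often}, and one only needs $m_{n,j}/n\to a_j$. Iterating down to $B_0$, a diagonal/compactness argument over $n$ produces the exponents $m_{n,i}$ with $m_{n,i}/n\to a_i$ and with $\gamma_n\cdot x\in A$ for $x$ in a positive-measure set.

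The main obstacle is the \emph{full-measure} conclusion $m(\{\dots\})=m(A)$, not merely positive measure. To get this I would use the standard maximality trick: let $A_0\subset A$ be the (measurable) set of $x\in A$ for which \emph{no} such sequence exists, i.e.\ the complement within $A$ of the set in the lemma, and suppose toward a contradiction that $m(A_0)>0$. Running the chained-recurrence construction above with $A_0$ in place of $A$ produces $x\in A_0$ and $\gamma_{n_k}$ with $\ssc{n_k}{\gamma_{n_k}}\to g$ and $\gamma_{n_k}\cdot x\in A_0\subset A$ — contradicting the definition of $A_0$. Hence $m(A_0)=0$, which is exactly the claim. The one technical subtlety to handle carefully in writing this up is measurability of $A_0$ (it is a countable union/intersection over a countable dense set of sequences $\gamma_n$, using separability to reduce to countably many candidate sequences) and verifying that the exponents obtained from iterated Poincaré recurrence can be arranged so that $\ssc{n_k}{\gamma_{n_k}}$ genuinely converges to the prescribed $g$ rather than to some other point of $G_\infty$; this is where the homothety coordinates and Lemma~\ref{L:multiplication} are invoked, exactly as in the proof of Proposition~\ref{p:one sequence}.
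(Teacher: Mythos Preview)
Your chaining strategy has a real gap at the step where you claim the exponents $m_{n,i}$ can be extracted with $m_{n,i}/n\to a_i$. Classical Poincar\'e recurrence for a single transformation $x\mapsto s_j\cdot x$ guarantees infinitely many return times to $B_j$, but says nothing about their density or distribution. When $k\ge 2$, once you have chosen $m_{n,k}$ (and hence effectively chosen the scale $n$), you then need the return-time set of $s_{k-1}$ \emph{at the new point} $s_k^{m_{n,k}}\cdot x$ to contain an element close to $na_{k-1}$; that return-time set is just some arbitrary infinite subset of $\bbN$ depending on the point, and there is no mechanism in your plan forcing it to meet the window $[(1-\epsilon)na_{k-1},(1+\epsilon)na_{k-1}]$. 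The sentence ``which is possible because Poincar\'e recurrence gives return times that are \dots\ infinitely often, and one only needs $m_{n,j}/n\to a_j$'' is precisely where the argument breaks: infinitude of returns does not give the required ratio control across generators, and the single-$\bbZ$ ergodic theorem is unavailable since the individual $s_j$ need not act ergodically.

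The paper sidesteps this entirely by imitating the \emph{proof} of Poincar\'e recurrence rather than invoking its statement. Using Lemma~\ref{L:multiplication} one builds a sequence $(n_k,\gamma_{n_k})$ with $d_\infty(\scl{n_k}{\gamma_{n_k}},g)<\delta$ and, crucially, with every difference $\gamma_{n_i}^{-1}\gamma_{n_j}$ (for $i<j$) also satisfying $d_\infty(\scl{n_j}{\gamma_{n_i}^{-1}\gamma_{n_j}},g)<\delta$; this is possible because for fixed $\gamma$ one has $\scl{m}{\gamma^{-1}}\to e$ as $m\to\infty$. Then if a bad set $E\subset A$ had positive measure, the translates $\gamma_{n_k}\cdot E$ would be pairwise disjoint (any overlap would produce a return of the forbidden kind), contradicting $m(X)<\infty$. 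This is short, uses no iterated return-time bookkeeping, and your maximality trick (replace $A$ by the bad set $A_0$) is exactly how one upgrades from ``positive measure'' to ``full measure in $A$''. I would recommend abandoning the chained single-generator approach and adopting the difference-sequence/disjoint-translates argument instead.
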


\begin{lemma}\label{l:weak Poincare}
Fix $g\in G_\infty$, let $A\subset X$ with $m(A)>0$ and let $\delta>0$. Then

\[
m\{x\in A: \exists \gamma\in\Gamma~ \exists n\in\bbN ~\text{such that}~d_\infty(\ssc{n}{\gamma},g)<\delta ~\text{and}~ \gamma\cdot x\in A\}=m(A).
\]
\end{lemma}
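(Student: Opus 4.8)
The plan is to build, by hand, an infinite family of group elements along which ordinary Poincar\'e recurrence applies, arranged so that the return it produces can be fed straight back into the hypothesis; the one non-routine point is a sparseness trick. First pick, via Proposition~\ref{p:one sequence}, a sequence $\gamma_n\in\Gamma$ with $\ssc{n}{\gamma_n}\to g$. The heart of the matter is to pass to a subsequence $n_1<n_2<\cdots$ with
\[
d_\infty\!\left(\ssc{n_j}{\gamma_{n_i}^{-1}\gamma_{n_j}},\,g\right)<\delta\qquad\text{for all }i<j.
\]
I would do this by induction. Given $n_1<\cdots<n_k$, note that each fixed $\gamma_{n_i}$ becomes negligible at large scales: $\ssc{m}{\gamma_{n_i}}=\delta_{n_i/m}\ssc{n_i}{\gamma_{n_i}}\to e$ as $m\to\infty$, since the dilations contract any fixed element of $G_\infty$ to the identity as the parameter tends to $0$; hence also $\ssc{m}{\gamma_{n_i}^{-1}}=\ssc{m}{\gamma_{n_i}}^{-1}\to e$. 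Combining this with $\ssc{m}{\gamma_m}\to g$ through Lemma~\ref{L:multiplication} gives $\ssc{m}{\gamma_{n_i}^{-1}\gamma_m}\to e\star g=g$ as $m\to\infty$, so for each $i\le k$ there is a threshold beyond which $d_\infty(\ssc{m}{\gamma_{n_i}^{-1}\gamma_m},g)<\delta$; take $n_{k+1}>n_k$ larger than all $k$ of these thresholds.

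Next, set $\calG=\{\gamma\in\Gamma:\ d_\infty(\ssc{n}{\gamma},g)<\delta\text{ for some }n\in\bbN\}$, so that by construction $\gamma_{n_i}^{-1}\gamma_{n_j}\in\calG$ whenever $i<j$. The complement inside $A$ of the set appearing in the lemma is precisely $B:=\{x\in A:\ \gamma\cdot x\notin A\text{ for every }\gamma\in\calG\}$, so it is enough to show $m(B)=0$. Suppose $m(B)>0$, and let $T_i$ be the measure-preserving transformation $x\mapsto\gamma_{n_i}\cdot x$ of $(X,m)$. The sets $T_iB$, $i\ge1$, all have measure $m(B)>0$ in the probability space $(X,m)$, so they cannot be pairwise $m$-null --- otherwise $N\,m(B)=m(T_1B\cup\cdots\cup T_NB)\le1$ for every $N$ --- and we may choose $i<j$ with $m(T_iB\cap T_jB)>0$. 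Applying the measure-preserving map $T_i^{-1}$, and noting that $T_i^{-1}T_j$ is the transformation $x\mapsto(\gamma_{n_i}^{-1}\gamma_{n_j})\cdot x$, we get $m\!\left(B\cap(\gamma_{n_i}^{-1}\gamma_{n_j})\cdot B\right)>0$; in particular there is $y\in B$ with $(\gamma_{n_i}^{-1}\gamma_{n_j})\cdot y\in B\subseteq A$. But $y\in B$ and $\gamma_{n_i}^{-1}\gamma_{n_j}\in\calG$, so the definition of $B$ forces $(\gamma_{n_i}^{-1}\gamma_{n_j})\cdot y\notin A$, a contradiction. Hence $m(B)=0$, and the set in the lemma has measure $m(A)$.

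I expect the main obstacle to be the subsequence construction. Taking the $\gamma_{n_i}$ themselves as the returning maps does not work, because $\gamma_{n_i}^{-1}\gamma_{n_j}$ need not rescale back near $g$; it is precisely the sparseness $n_i\ll n_j$ --- which makes $\gamma_{n_i}$ asymptotically trivial at scale $n_j$ --- that forces $\ssc{n_j}{\gamma_{n_i}^{-1}\gamma_{n_j}}\to e\star g=g$, so that the return produced by the pigeonhole genuinely lies in $\calG$ and contradicts membership in $B$. Everything else --- Lemma~\ref{L:multiplication}, the contraction property of the dilations $\delta_t$, and the standard pigeonhole in a probability space --- is routine.
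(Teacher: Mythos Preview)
Your argument is correct and follows essentially the same route as the paper's proof: inductively build a sparse subsequence so that each pairwise ``difference'' $\gamma_{n_i}^{-1}\gamma_{n_j}$ (for $i<j$) rescales $\delta$-close to $g$ via Lemma~\ref{L:multiplication} and the contraction $\ssc{m}{\gamma_{n_i}}\to e$, then derive a contradiction from pigeonhole in the probability space. The only cosmetic difference is that you phrase the contradiction as ``the $T_iB$ cannot all be pairwise $m$-null'' while the paper phrases it as ``the $\gamma_{n_k}E$ are pairwise disjoint''; also, citing Proposition~\ref{p:one sequence} for the initial sequence is a slight overkill, since all you need is any sequence $\gamma_n$ with $\ssc{n}{\gamma_n}\to g$, which is immediate from the asymptotic cone description.
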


\begin{proof}[Proof that Lemma \ref{l:weak Poincare} implies Lemma \ref{l:Poincare}]
Set

\[
A_\delta=\{x\in A: \exists \gamma\in\Gamma~ \exists n\in\bbN ~\text{such that}~d_\infty(\ssc{n}{\gamma},g)<\delta ~\text{and}~ \gamma\cdot x\in A\}
\]

which has measure $m(A)$ by \ref{l:weak Poincare}. Then

\[
A'=\cap_{l=1}^{\infty} A_{1/l}
\]

again has measure $m(A)$, and has the desired property.
\end{proof}

\begin{proof}[Proof of Lemma \ref{l:weak Poincare}]
Suppose for contradiction that there is $g\in G_\infty$, $A \subset X$ with $m(A)>0$, $\delta>0$ and $E\subset A$ with $m(E)>0$ such that for a.e. $x\in E$, $d_\infty(\ssc{n}{\gamma},g)<\delta$ implies $\gamma\cdot x\notin A$.

We claim that there exist infinitely many $(n_k,\gamma_{n_k})\in\bbN\times\Gamma$ such that \\ $d_\infty(\ssc{n_k}{\gamma_{n_k}},g)<\delta$ and such that if $k_i<k_j$ then 

\[
d_\infty(\ssc{n_{k_j}}{\gamma^{-1}_{n_{k_i}}\gamma_{n_{k_j}}},g)<\delta.
\]

Indeed, pick any $(n_1,\gamma_{n_1})$ so that $d_\infty (\ssc{n_1}{\gamma_{n_1}},g)<\delta$. Now consider any sequence $\ssc{m}{\gamma_m}\to g$. Since $\ssc{m}{\gamma^{-1}_{n_1}}\to id$ as $m\to\infty$, Lemma \ref{L:multiplication} implies that

\[
\ssc{m}{\gamma^{-1}_{n_1}\gamma_m}\to g.
\]

Thus we may pick $n_2:=m$ large to satisfy the claim. Continuing in this way, the claim is proved.

Now we see that the sets $\gamma_{n_k} E$ are pairwise disjoint: indeed, if not, then 

\[
m(\gamma^{-1}_{n_{k_i}}\gamma_{n_{k_j}} E \cap E)>0.
\]

which implies that there is a positive measure set of $x \in E$ so that $\gamma^{-1}_{n_{k_i}}\gamma_{n_{k_j}} \cdot x \in E\subset A$ while $d_\infty(\ssc{n_{k_j}}{\gamma^{-1}_{n_{k_i}}\gamma_{n_{k_j}}},g)<\delta$, contradicting the definition of $E$.

Thus the sets $\gamma_{n_k}E$ are pairwise disjoint. But as $m(E)>0$, this is also impossible.
\end{proof}

Notice that, while one can formulate the Lemmas \ref{l:Poincare} and \ref{l:weak Poincare} for any group together with one of its asymptotic cones, the key ingredient that fails for groups that are not nilpotent is Lemma \ref{L:multiplication}. This is easily seen in the free group.

\begin{proposition}
$\Phi$ and $\Psi$ are inverse maps. Consequently, they are group isomorphisms.
\end{proposition}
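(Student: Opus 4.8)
The plan is to prove $\Psi\circ\Phi=\id_{G_\infty}$; then $\Phi\circ\Psi=\id_{H_\infty}$ follows by the symmetry between $(\Gamma,\alpha,\Phi)$ and $(\Lambda,\beta,\Psi)$ built into the construction, and since we already know $\Phi$ and $\Psi$ are homomorphisms, the two identities together make them mutually inverse bijective homomorphisms, i.e. group isomorphisms; being isomorphisms of Carnot groups they are automatically bi-Lipschitz for any Carnot--Carath\'eodory metrics (Remark \ref{rmk:A}).

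Fix $g\in G_\infty$ and set $h=\Phi(g)$; the goal is $\Psi(h)=g$. The only bridge between $\alpha$ and $\beta$ is the inversion identity $\beta(\alpha(\gamma,x),x)=\gamma$, which holds on $A:=X\cap Y$ whenever also $\gamma\cdot x\in A$; here $m(A)>0$ after the normalization of \S\ref{ss:ime} and the reduction of \S\ref{ss:reduction}. First I would invoke the nilpotent Poincar\'e recurrence Lemma \ref{l:Poincare} with this $g$ and $A$: for $m$-a.e. $x\in A$ there is a sequence $(\gamma_k,n_k)$ with $n_k\to\infty$, $\ssc{n_k}{\gamma_k}\to g$, and $\gamma_k\cdot x\in A$, so that $\beta(\alpha(\gamma_k,x),x)=\gamma_k$. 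If for one such $x$ we also knew $\ssc{n_k}{\alpha(\gamma_k,x)}\to h$ and $\ssc{n_k}{\beta(\alpha(\gamma_k,x),x)}\to\Psi(h)$, then, since $\beta(\alpha(\gamma_k,x),x)=\gamma_k\to g$ in the scaled sense, we would conclude $\Psi(h)=g$. So the work is to exhibit a single configuration realizing all of these simultaneously.

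To produce it I would feed the recurrence into the uniform form of Proposition \ref{p:all sequences}, used twice: for $\alpha$ at $g$ and for $\beta$ at $h$. Given small $\epsilon>0$, Proposition \ref{p:all sequences} for $\beta$ supplies $\delta>0$ and $N$ so that $\ssc{n}{\lambda}$ being $\delta$-close to $h$ (with $n\ge N$) forces $\ssc{n}{\beta(\lambda,y)}$ to be $\epsilon$-close to $\Psi(h)$ off a small-measure set of $y$; Proposition \ref{p:all sequences} for $\alpha$ makes $\ssc{n}{\alpha(\gamma,x)}$ $\delta$-close to $h$ off a small-measure set, uniformly over pairs $(\gamma,n)$ that are $\delta$-close-to-$g$ with $n\ge N$. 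By Lemma \ref{l:Poincare} together with countable additivity, secure a single such pair $(\gamma_0,n_0)$ for which $\{x\in A:\gamma_0\cdot x\in A\}$ has positive measure. Observing that for fixed $n_0$ only finitely many $\lambda\in\Lambda$ have $\ssc{n_0}{\lambda}$ within $\delta$ of $h$ (these are lattice points of $\Lambda$ in a bounded region), a pigeonhole over the countably many pairs $(\gamma,n)$ and, at scale $n_0$, over these finitely many $\lambda$ then locates a point $x$ lying in the positive-measure recurrent set, in the $\alpha$-good set for $(\gamma_0,n_0)$, and in the $\beta$-good set for $\lambda=\alpha(\gamma_0,x)$. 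For this $x$ one reads off that $\ssc{n_0}{\gamma_0}=\ssc{n_0}{\beta(\alpha(\gamma_0,x),x)}$ is $\epsilon$-close to $\Psi(h)$ and $\delta$-close to $g$, so $d_\infty(\Psi(h),g)<\epsilon+\delta$; letting $\epsilon,\delta\to0$ gives $\Psi(h)=g$.

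The hard part is exactly this last extraction: converting the ``in measure'' conclusions of Propositions \ref{p: iterates H_infty} and \ref{p:all sequences} together with the ``a.e.'' conclusion of Lemma \ref{l:Poincare} into the \emph{deterministic} equality $\Psi(h)=g$. The delicate point is the order of quantifiers — the measure of a recurrent set depends on how large $n$ must be taken, which in turn depends on the precision demanded of Proposition \ref{p:all sequences} — so one must first fix a recurrent pair with a definite lower bound on its return measure and only afterwards shrink the error parameters, exploiting the finiteness of the relevant set of lattice points $\lambda\in\Lambda$ at a fixed scale. (Alternatively one can shorten matters by noting that a continuous homomorphism of simply connected nilpotent Lie groups is determined by its differential on $V_1$, reducing the claim to $\Psi(\overline{\alpha_{ab}}(s))=s$ for $s$ in the generating set $S$ of $G_\infty$ and running the argument only along the one-parameter subgroups $\{s^n\}$, where ordinary Poincar\'e recurrence for the single transformation $x\mapsto s\cdot x$ and Proposition \ref{p: iterates H_infty} suffice.)
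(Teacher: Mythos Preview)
Your proposal is correct and follows essentially the same route as the paper: apply Proposition~\ref{p:all sequences} once for $\alpha$ at $g$ and once for $\beta$ at $\Phi(g)$, combine with the nilpotent Poincar\'e recurrence Lemma~\ref{l:Poincare} on $X\cap Y$ and the inversion identity $\beta(\alpha(\gamma,x),x)=\gamma$, and conclude $d_\infty(\Psi(\Phi(g)),g)<2\epsilon$. Your extraction step (fixing a recurrent pair $(\gamma_0,n_0)$ with definite return mass, then pigeonholing over the finitely many $\lambda\in\Lambda$ with $\ssc{n_0}{\lambda}$ near $h$) is more carefully quantified than the paper's version, which simply asserts the relevant intersection has positive measure, but the underlying argument is the same.
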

Recall (\S \ref{ss:ime}) that the fundamental domains $X$ and $Y$ satisfy $m(X\cap Y)>0$ and that $x\in X\cap Y \cap \gamma^{-1}(X\cap Y)$ implies that $\beta(\alpha(\gamma,x),x)=\gamma$. 

\begin{proof}
Fix $g\in G_\infty$ and $\epsilon>0$. We will show that $d_\infty(\Psi(\Phi(g)),g)<2\epsilon$. Using the symmetry of $\alpha$ and $\beta$, we apply Proposition \ref{p:all sequences} to the cocycle $\beta$, the map $\Psi$ and the element $\Phi(g)$ to obtain $N\in\bbN$ and $\delta>0$ so that for any $\gamma_n\in\Gamma$ with $n\ge N$ and any $x\in X$, for a positive measure set of $y\in X\cap Y$

\begin{equation}\label{eq:beta alpha}
d_{H_\infty}(\ssc{n}{\alpha(\gamma_n,x)},\Phi(g))<\delta \implies d_{G_\infty}(\ssc{n}{\beta(\alpha(\gamma_n,x),y)}.\Psi(\Phi(g)))<\epsilon.
\end{equation}

Now applying Proposition \ref{p:all sequences} to $\alpha$, $\Phi$ and $g$ we obtain $\delta'>0$ and $N'\in\bbN$ so that whenever $n\ge N$

\[
d_{G_\infty}(\ssc{n}{\gamma_n},g)<\delta'
\]

implies that for a positive measure subset of $X \cap Y$ both \eqref{eq:beta alpha} occurs and

\[
d_{H_\infty}(\ssc{n}{\alpha(\gamma_n,x)},\Phi(g))<\delta.
\]

Choose $\delta'<\epsilon$ if necessary, and set $N=\max(N,N')$. Then with positive probability in $X\cap Y$, for $n\ge N$

\begin{equation}\label{eq:XY}
d_{G_\infty}(\ssc{n}{\gamma_n},g)<\delta' \implies d_{G_\infty}(\ssc{n}{\beta(\alpha(\gamma_n,x),x)},\Psi(\Phi(g)))<\epsilon.
\end{equation}

Now we invoke Lemma \ref{l:Poincare} (Poincar\'e Recurrence) applied to $X\cap Y$, $g$ and $\delta'$ to assert that with positive probability in $X\cap Y$ there exists $n\ge N$ and $\gamma_n\in\Gamma$ with $d_{G_\infty}(\ssc{n}{\gamma_n},g)<\delta'$, such that $\gamma_n x\in X\cap Y$ and such that \eqref{eq:XY} occurs. Therefore with positive probability 

\[
d_{G_\infty}(\ssc{n}{\gamma_n},\Psi(\Phi(g)))<\epsilon \qquad \text{and} \qquad d_{G_\infty}(\ssc{n}{\gamma_n},g)<\epsilon.
\]
\end{proof}

\subsection{Theorem \ref{T:Main} implies Theorem \ref{t:derivative}}

\begin{proof}
We recall the definition of the maps $\kappa_{x,n}$.  For each $n\in\bbN$ the maps $\text{scl}^{G_\infty}_n(-):\Gamma\to G_\infty$ map $\Gamma$ more and more densely into $G_\infty$ and similarly for  $\text{scl}^{H_\infty}_n(-):\Lambda\to H_\infty$ (see  \S 2). For every $g\in G_\infty$ and every $n\in\bbN$ let $j_n(g)\in \Gamma$ be an element of $\Gamma$ minimizing the distance between $\text{scl}_n^{G_\infty}(\Gamma)$ and $g$. Then for $g\in G_\infty$ we define

\[
\kappa_{x,n}(g)=\text{scl}_n^{H_\infty}(\alpha(j_n(g),x)).
\]

Now fix $R>0$, $\delta>0$ and $\epsilon>0$. Let $B_R^{G_\infty}(e)$ denote the ball of radius $R>0$ in $(G_\infty,d_\infty)$ about the identity. By Theorem \ref{T:Main}, for every $g\in G_\infty$ there is $\tau=\tau(g)>0$ so that whenever $\text{scl}_n^{G_\infty}(\gamma_n) \in B_{\tau}^{G_\infty}(e)$, with probability at least $1-\delta$ we have 

\[
d_{H_\infty}(\Phi(g),\text{scl}_n^{H_\infty}(\alpha(\gamma_n,x)))<\epsilon.
\]

By the compactness of $B_R^{G_\infty}(e)$ we obtain a finite set $F\subset B_R^{G_\infty}(e)$ with the property that for every $g\in B_R^{G_\infty}(e)$ there is $g_0\in F$ so that $d_{G_\infty}(g,g_0)<\epsilon$ and so that 

\[
g\in B_{\tau(g_0)/2}^{G_\infty}(g_0).
\]

Now set $\tau=\min_F \tau(g)$ and choose $N$ large so that for all $n\ge N$, for all $g\in B_R^{G_\infty}(e)$ we have

\[
d_{G_\infty}(\text{scl}_n^{G_\infty}(j_n(g)),g)<\tau/2.
\]

Then for all $n\ge N$ and every $g\in B_R^{G_\infty}(e)$ there is $g_0\in F$ so that with probability at least $1-\delta$

\[
d_{H_\infty}(\Phi(g_0),\text{scl}_n^{H_\infty}(\alpha(j_n(g),x)))<\epsilon
\]

and 

\[
d_{G_\infty}(\Phi(g),\Phi(g_0))<L\epsilon
\]

where $L$ is the Lipschitz constant for $\Phi$. This finishes the proof.
\end{proof}

\begin{bibdiv}
\begin{biblist}

	\bib{Austin}{article}{
		author={Austin, T.},
		title={Integrable measure equivalence for groups of polynomial growth},
		contribution={
type={an appendix},
author={Bowen, L.}
},
		eprint={arXiv:1310.3216},

	}
	
	\bib{BFS}{article}{
	  author={Bader, U.},
	  author={Furman, A.}
	  author={Sauer, R.}
	   title={Integrable Measure Equivalence and Rigidity of Hyperbolic Lattices},
	   journal={Invent. Math},
	   volume={194},
	   date={2013},
	   number={2},
	   pages={313-379},

	}
	
	\bib{Bre}{article}{
		author={Breuillard, Emmanuuel},
		title={Geometry of locally compact groups of polynomial growth and shape of large balls},
		journal={Groups Geom. Dyn.},
		volume={8},
		date={2014},
		number={3},
		pages={669--732},

	}
	
	\bib{CF}{article}{
		author={Cantrell, M.},
		author={Furman, A.}
		title={Asymptotic Shapes for Ergodic Families of Metrics on Nilpotent Groups},
		eprint={arXiv:1508.00244},
		
	}
	
	\bib{Furman}{article}{
	  author={Furman, Alex},
	   title={A Survey of Measured Group Theory},
	   book={
		        series={Geometry, Rigidity, and Group Actions},
			publisher={The University of Chicago Press, Chicago and London},
			date={2011},
		      },
	   pages={296-374},

	}
	
	\bib{Gromov}{article}{
	   author={Gromov, Mikhael},
	   title={Groups of polynomial growth and expanding maps},
	   journal={Inst. Hautes \'Etudes Sci. Publ. Math.},
	   number={53},
	   date={1981},
	   pages={53--73},
	   % issn={0073-8301},
	   % review={\MR{623534 (83b:53041)}},

	}
	
	\bib{Gromov2}{article}{
	   author={Gromov, Mikhael},
	   title={Asymptotic Invariants of Infinite Groups},
	   book={
	   		title={Geometric Group Theory, Vol. 2},
			note={London Math. Soc. Lecture Series},
			publisher={Cambridge Univ. Press, Cambridge},
			editor={Nibol and Roller},
			  volume={182},
			  year={1993},
		pages={1-295},
			},

	}
	
	\bib{Gui}{article}{
	   author={Guivarc'h, Y.l},
	   title={Croissance polynômiale et périodes des fonctions harmoniques},
	   journal={Bull. Sc. Math. France},
	   number={101},
	   date={1973},
	   pages={353-379},

	}
	
	\bib{KP}{article}{
	  author={Kyed, D.},
	  author={Petersen, H.D.},
	   title={Quasi-isometries of Nilpotent Groups},
	   eprint={arXiv:1503.04068},

	}
	
	\bib{Malcev}{article}{
	author={Mal'cev, A.I.},
	title={On a Class of Homogeneous Spaces}
	journal={Izvestiya Akad. Nauk. SSSR. Ser. Mat.},
	number={13},
	date={1949},
	 pages={9-32},
	}

	\bib{OW}{article}{
	  author={Ornstein,D.S.},
	  author={Weiss,B.}
	   title={Ergodic Theory of Amenable Group Actions. I. The Rohlin Lemma},
	   journal={Bull. Amer. Math. Soc. (N.S.)},
	   volume={2},
	   date={1980},
	   number={1},
	   pages={161-164},

	}

	\bib{Pansu}{article}{
	   author={Pansu, Pierre},
	   title={Croissance des boules et des g\'eod\'esiques ferm\'ees dans les
	   nilvari\'et\'es},
	   language={French, with English summary},
	   journal={Ergodic Theory Dynam. Systems},
	   volume={3},
	   date={1983},
	   number={3},
	   pages={415--445},
	   % issn={0143-3857},
	   % review={\MR{741395 (85m:53040)}},
	   % doi={10.1017/S0143385700002054},
	}
	
	\bib{PansuLip}{article}{
	  author={Pansu, Pierre},
	   title={Metriques de Carnot-Caratheodory et Quasiisometries des Espaces Symetriques de rang un},
	   language={French, with English summary},
	   journal={The Annals of Mathematics},
	   volume={129},
	   date={1989},
	   number={1},
	   pages={1-60},

	}
	
	\bib{Sauer}{article}{
	  author={Sauer, Roman },
	   title={Homological Invariants and Quasi-isometry},
	   journal={Geom. Funct. Anal (GAFA)},
	   volume={16},
	   date={2006},
	   number={2},
	   pages={476-515},

	}
	
	\bib{Shalom}{article}{
	  author={Shalom, Yehuda },
	   title={Harmonic Analysis, Cohomology, and the Large-Scale Geometry of Amenable Groups},
	   journal={Acta Mathematica},
	   volume={192},
	   date={2004},
	   number={2},
	   pages={119-185},

	}

\end{biblist}
\end{bibdiv}

\end{document}